\documentclass[a4paper,10pt,reqno]{amsart}

\usepackage{amsmath}
\usepackage{amsthm}
\usepackage{amsfonts}
\usepackage{amssymb}
\usepackage{mathrsfs}
\usepackage[shortlabels]{enumitem}
\usepackage{graphicx}
\usepackage{subfigure}
\usepackage{color}
\usepackage[dvipsnames]{xcolor}
\usepackage{stmaryrd}

\usepackage{mathtools}
\usepackage[font=small]{caption}
\mathtoolsset{showonlyrefs=true}

\newtheorem{theorem}{Theorem}[section]
\numberwithin{theorem}{section}
\newtheorem{prop}[theorem]{Proposition}
\newtheorem{lemma}[theorem]{Lemma}

\theoremstyle{definition}
\newtheorem{definition}[theorem]{Definition}
\newtheorem{rem}[theorem]{Remark}
\newtheorem{ass}{Assumption}

\newtheorem{assOm}{Assumption}

\numberwithin{equation}{section}

\newcommand{\N}{\mathbb{N}}

\newcommand{\R}{\mathbb{R}}
\newcommand{\Rd}{\mathbb{R}^d}
\newcommand{\C}{\mathbb{C}}

\newcommand\e{\mathrm{e}}
\newcommand\I{\mathrm{i}}
\newcommand\re{\operatorname{Re}}
\newcommand\im{\operatorname{Im}}

\newcommand{\DD}{\mathcal D}
\newcommand{\rd}{\mathrm{d}}

\newcommand\dom{\mathcal D}

\newcommand\lbar\overline

\newcommand\eps\varepsilon
\renewcommand\epsilon\varepsilon
\renewcommand\rho\varrho
\newcommand\al\alpha
\newcommand\lm\lambda

\newcommand\ds\displaystyle

\newcommand\p\partial

\newcommand{\cf}{\emph{cf.}}
\newcommand{\ie}{{\emph{i.e.}}}
\newcommand{\eg}{{\emph{e.g.}}}
\newcommand{\dist}{\operatorname{dist}}
\newcommand{\sgn}{\operatorname{sgn}}
\newcommand{\supp}{\operatorname{supp}}
\newcommand{\loc}{\mathrm{loc}}
\newcommand{\LRd}{L^2\left(\Rd,\C\right)}

\newcommand{\LjlocRd}{L^1_{\loc}\left(\Rd,\C\right)}

\newcommand{\Llam}{{L^2\left(\Omega_n,\C\right)}}
\newcommand{\CcRdR}{C_0^{\infty}\left(\Rd,\R\right)}
\newcommand{\CcRdC}{C_0^{\infty}\left(\Rd,\C\right)}
\newcommand{\WotRd}{{W^{1,2}\left(\R^d,\C\right)}}
\newcommand{\WttRd}{{W^{2,2}\left(\R^d,\C\right)}}
\newcommand{\Wotlam}{{W^{1,2}\left(\Omega_n,\C\right)}}
\newcommand{\DnDR}{\Delta_n^{\rm DR}} 
\newcommand{\Num}{\operatorname{W}} 
\newcommand{\Tm}{T_{\min}}
\newcommand{\Trm}{T_{0,\min}}
\newcommand{\an}{a_{\nabla}}
\newcommand{\bn}{b_{\nabla}}
\newcommand{\aU}{a_U}
\newcommand{\bU}{b_U}

\renewcommand{\restriction}{\!\upharpoonright\!}
\renewcommand\d{\,{\rm d}}
\newcommand{\Lla}{{{\mathcal L}_\lambda(T)}}

\author{Sabine B\"ogli}
\address[S.\ B\"ogli]{
School of Mathematics,
Cardiff University,
21–-23 Senghennydd Road, Cardiff CF24 4AG, UK}
\email{sabine.boegli@math.unibe.ch}

\author{Petr Siegl}
\address[P.\ Siegl]{
Mathematisches Institut, 
Universit\"{a}t Bern,
Alpeneggstr.\ 22,
3012 Bern, Switzerland
\& On leave from Nuclear Physics Institute ASCR, 25068 \v Re\v z, Czech Republic}
\email{petr.siegl@math.unibe.ch}

\author{Christiane Tretter}
\address[C.\ Tretter]{
Mathematisches Institut, 
Universit\"{a}t Bern,
Sidlerstrasse 5,
3012 Bern, Switzerland
\& Matematiska institutionen, Stockholms universitet, 10691 Stockholm, Sweden
}
\email{tretter@math.unibe.ch}

\thanks{
}

\usepackage[update,prepend]{epstopdf} 

\title[Approximations of spectra of Schr\"odinger operators]{Approximations of spectra of Schr\"odinger operators with complex potentials on $\R^d$}

\begin{document}

\subjclass[2010]{35J10, 47A10, 47A58}

\keywords{non-selfadjoint Schr\"odinger operators, complex potential, approximation of eigenvalues and pseudospectra}

\date{\today}

\begin{abstract}
We study spectral approximations of Schr\"odinger operators $T=-\Delta+Q$ with complex potentials on~$\Omega=\Rd$, or exterior domains $\Omega\subset\R^d$, 
by domain truncation. Our weak assumptions cover wide classes of potentials $Q$ for which $T$ has discrete spectrum, of approximating domains~$\Omega_n$, and of 
boundary conditions on $\partial \Omega_n$ such as mixed Dirichlet/Robin type. In parti\-cular, $\re Q$ need not be bounded from below and $Q$ may be singular. 
We prove generalized norm resolvent convergence and spectral exactness, \ie~approximation of \emph{all} eigenvalues of $T$ by those of the truncated operators $T_n$
\emph{without} spectral pollution.  Moreover, we estimate the eigenvalue convergence rate and prove convergence of pseudospectra. 
Numerical computations for several examples, such as complex harmonic and cubic oscillators for $d=1,2,3$, illustrate our results. 
\end{abstract}

\maketitle

\section{Introduction}
Although domain truncation is one of the most commonly used techniques for approximating partial differential operators on unbounded domains, 
it is a major challenge to guarantee its reliability, even if the spectrum is purely discrete.
Not only may the approximation produce \emph{spurious limits} that are no true eigenvalues. 
It may also happen that some true eigenvalues are \emph{not approximated}, in particular for non-selfadjoint operators.
While very recent research and applications show that there is particular interest in  Schr\"odinger operators on unbounded domains with complex potentials \cite{Almog-2008-40, Almog-toappear, Hansen-Wong,Brown-Marletta-2001}, 
there are no general spectral convergence results for domain truncation for this basic class of operators. 

The aim of the present paper is to fill this gap and prove \emph{spectral exactness}, \ie~the absence of the two unwanted phenomena described above, for wide classes of Schr\"odin\-ger operators $T\!=\!-\Delta+ Q$ in $L^2\left(\Omega,\C\right)$ where $\Omega$ is $\R^d$ or an exterior domain in $\R^d$. Our assumptions on the potential, the domains $\Omega_n$ approximating~$\Omega$, and the conditions on the artificial boundaries $\partial\Omega_n$ are very weak. 
For the complex-valued potential $Q$ we only require $|Q(x)| \rightarrow \infty$ as $|x| \rightarrow \infty$ 
and some mild assumptions guaranteeing that $T$ has discrete spectrum; in particular, $\re Q$ need not be bounded from below and $Q$ may be singular. 
For the  approximating operators $T_n=-\Delta+Q$ in $\Llam$ we require no regularity of the bounded domains $\Omega_n$ exhausting $\Omega$ as $n\to\infty$
for Dirichlet conditions on~$\partial \Omega_n$, and only low regularity for mixed Dirichlet-Robin conditions.
Moreover, we establish estimates for the convergence rate of the approximate eigenvalues and convergence of pseudospectra. Our abstract results are illustrated  by numerical computations for several examples of different potentials, dimensions, domains, and boundary conditions.

The notion of \emph{spectral exactness} 
was first introduced in \cite{Bailey-1993} for regular approximations of singular selfadjoint Sturm-Liouville problems by interval truncation.
It means that a sequence of approximating operators $\{T_n \} _n$ has the following two  properties, \cf \ \eg~\cite{Brown-Marletta-2001}:
\begin{enumerate}[\upshape i)]
\item \emph{spectral inclusion}: for every eigenvalue $\lambda \in \sigma(T)$ there exist $\lambda_n\in \sigma(T_n)$, $n\in\N$, with $\lambda_n \to \lambda$ as $n \to \infty$;
\item \emph{no spectral pollution}: if there exists a sequence of eigenvalues $\lm_n\in \sigma(T_n)$, $n\in\N$, with an accumulation point $\lambda \in \C$, then $\lambda \in \sigma(T)$. 
\end{enumerate}
For partial differential operators, results on spectral exactness in the literature are fragmented. Even in the case of Schr\"odinger operators,
explicit proofs of spectral exactness are either confined to selfadjoint or elliptic problems, in both cases restricted to potentials with real part bounded from below, 
\cf~\cite{Marletta-2010, Boulton-2013, Gannot-2014} and references therein, or they cover only the one-dimensional case, \cf~\cite{Combes-1983-52}, or they concern Galerkin approximations, \cf~\cite{Hansen-Wong}. 
Spectral exactness for domain truncation of \emph{non-selfadjoint} differential operators was studied \eg~in~\cite{Brown-Marletta-2001,Brown-Marletta-2003,Brown-2004-24},
where tests for spectral exactness in terms of boundary conditions were developed. 
However, the verification of the assumptions therein proved to be difficult and sometimes impossible, \cf~\cite[Ex.\ 1]{Brown-2004-24}.
Our new result yields spectral exactness also for this previously debated example, \cf~Subsection~\ref{subsec.ext}.

In general, spectral exactness is a major challenge for non-selfadjoint problems. 
In the selfadjoint case, it is well-known that generalized strong resolvent convergence implies spectral inclusion, and if the resolvents converge even in norm, 
then spectral exactness prevails, \cf~\cite[Thm.\ 9.24\ a),\ 9.26\ b)]{weid1} and also~\cite{Weidmann-survey} for a survey on related results.
Here ``generalized'' refers to the fact that the resolvents $(T_n-\lm)^{-1}$ and $(T-\lm)^{-1}$ do not act in the same space. 
In the non-selfadjoint case, norm resolvent convergence excludes spectral pollution, \cf~\cite[Sec.\ IV.3.1]{kato};
however, the approximation need not be spectrally inclusive, \cf~\cite[Ex.\ IV.3.8]{kato}. 
Moreover, in general, generalized strong resolvent convergence is not enough to guarantee spectral exactness even 
if all operators have compact resolvents, \cf~the Galerkin approximation in \cite[Ex.~5]{Boulton-2012-2} 
where a spurious eigenvalue was proved to exist.

In the present paper we establish spectral exactness by proving generalized norm resolvent convergence of $T_n$ to $T=-\Delta+Q$ in $\R^d$, or in exterior domains in $\R^d$, for domain truncation. Striving for minimal assumptions on the potential $Q$, we exploit the interplay between the different parts of the potential $Q$ if we decompose it as
\[
Q= Q_0 - U + W 
\]
where $Q_0$ with $\re Q_0 \ge 0$ is the ``regular" 
part, $-U \le 0$ is the ``non-positive" part, and $W$ is the ``singular" part.
More precisely, the required regularity of $Q_0$, and the way how we introduce the operators $T$ and $T_n$, depend on the sectoriality angle $\theta$ of $Q_0-U$:
%
\begin{enumerate}[\upshape I.]
\item If $\theta<\pi/2$, which requires $U\equiv 0$, 
we can allow for potentials with lower regularity and we use sectorial form techniques to introduce $T$ and $T_n$, \cf~Assumption~\ref{ass.q.sec}; 
\item If $\theta\geq \pi/2$, where $\re Q$ need not be bounded from below, 
we require more regularity and we use perturbation theory for $m$-accretive operator 
to introduce $T$ and $T_n$, \cf~Assumption~\ref{ass.q.non-sec}.

\end{enumerate}
The following two one-dimensional examples illustrate the difference between the two different asumptions:
\begin{equation*}
	\begin{aligned}
		&{\theta < \pi/2, \,\text{Assumption \ref{ass.q.sec}}: } \quad &&Q(x) = (1+\I) x^2 + \I \delta(x), \\
		&\theta \geq \pi/2, \,\text{Assumption \ref{ass.q.non-sec}}:  \quad &&Q(x)=\I x^3 - x^2 + \I x^{-\frac{1}{4}}.
	\end{aligned}
\end{equation*} 

In both cases, the resulting operators $T$ and $T_n$ are quasi-sectorial in the sense of \cite[Sec.\ 2.8]{Haase-2006-169} and they coincide 
if both Assumptions \ref{ass.q.sec} and \ref{ass.q.non-sec} are satisfied. We emphasize that the formulation of our results is independent of the assumption that is satisfied.

The paper is organized as follows. 
In Section \ref{sec.op.Rd}, we establish the two different sets of assumptions on the potential $Q$, introduce the operator $T = -\Delta +Q$ in $\LRd$ 
in two different ways, and provide the necessary results on the 
operator domain, graph norm, and resolvent estimates for $T$ in both cases. 
In Section~\ref{sec.Tn}, we establish the assumptions on the truncated domains $\Omega_n$ and the boundary conditions on the artificial boundary $\partial \Omega_n$, introduce the corresponding
approximating operators $T_n$,  
and study their properties. 
In particular, 
we show that the sequence $\{T_n\}_n$ is uniformly quasi-sectorial, \cf~\cite[Sec.\ 2.1]{Haase-2006-169}, with semi-angle $<\pi/2$ in Case~I and with $\ge \pi/2$ in Case~II;
moreover, in the latter case 
we derive 
uniform resolvent estimates in the complementary sector in the left half-plane. 
In Section \ref{sec.conv}, 
employing results on discretely or collectively compact approximations, \cf~\cite{stummel1,anselonepalmer,Osborn-1975-29}, 
we prove our main theorem on generalized norm resolvent convergence of $T_n$ to $T$, \cf~Theorem~\ref{thm.norm.res.conv}.
In Section \ref{sec.sp.conv}, we use this result to establish spectral exactness 
and estimates on the convergence rate of the approximate eigenvalues, \cf~Theorems~\ref{thm.conv.spectrum} and \ref{thm.conv.rate},
as well as convergence of the pseudospectra of $T_n$ to those of $T$ in Attouch-Wets metric, which is a generalization of Hausdorff metric to unbounded subsets of $\C$, \cf~Theorem~\ref{thm.pseudo}. 
In Section~\ref{sec.ext.dom}, we show that all our theorems generalize to Schr\"odinger operators on exterior domains $\Omega\subset \R^d$ by sketching the necessary modifications in the assumptions and proofs. 
In the final Section~\ref{sec.ex}, we illustrate the abstract results by numerical computations for several examples 
of different potentials $Q$, dimensions $d$, domains $\Omega$, and boundary conditions on $\partial \Omega_n$, including complex cubic and harmonic oscillators.

Throughout this paper, we employ the following conventions.
The Euclidean norm in $\C^d$ is denoted by $|\cdot|$, the corresponding scalar product by $\langle\cdot,\cdot\rangle_{\C^d}$,  
and the Euclidean scalar product in $\R^d$ by a dot. 
A domain $\Omega \subset \R^d$ is an open connected subset; $\Omega$ is called exterior domain if $\R^d\setminus \Omega$ is compact. 
For a subset $\Omega\subset\R^d$, we tacitly view every function $f\!\in\! L^2(\Omega,\C)$ as an element of $L^2(\R^d,\C)$ by extending $f$ by zero outside $\Omega$; conversely, we view every $g\!\in\! L^2(\R^d,\C)$ with $g\restriction \R^d\backslash\Omega=0$ as an element of $L^2(\Omega,\C)$.
The norm and scalar product in $\LRd$ and $\Llam$ are denoted by $\|\cdot\|,$ $\|\cdot\|_n$ and $\langle\cdot,\cdot \rangle$, $\langle\cdot,\cdot \rangle_n$, respectively. All scalar products are linear in the first argument.
Partial derivatives, always understood in the weak sense, are denoted by $\partial_j$ and
we systematically abbreviate $\langle \nabla f, \nabla g \rangle\!:=\!\sum_{j=1}^d \langle \partial_j f, \partial_j g \rangle $, 
$ \|\nabla f\|\!:=\!\| |\nabla f |\|$. 

\section{Schr\"odinger operators with complex potentials on $\R^d$}
\label{sec.op.Rd}

In this section, we establish mild criteria for Schr\"odinger operators $T=-\Delta + Q$ in $\LRd$ with complex-valued potential to have compact resolvent and to qualify for our main result on spectral exactness, 
\cf~Assumption~\ref{ass.q.sec} or \ref{ass.q.non-sec}. Our criteria allow for potentials $Q$ of the form 
\begin{equation}\label{Q}
   Q= Q_0 - U + W, \quad \re Q_0 \ge 0, \ U \ge 0, 
\end{equation}
with real part possibly unbounded from below ($U\not\equiv 0$) and with singular part ($W\not\equiv 0$). The assumptions and construction of the operator $T$ are different for the case that $Q_0 - U$ is 
sectorial with semi-angle $\theta < \pi/2$ ($U\equiv 0$) or $\theta \ge \pi/2$ ($U\not\equiv 0$). The weaker sectoriality assumptions in the latter case necessitate more than the minimal regularity of 
$Q_0$ needed in the former case. 

We remark that if $Q$ satisfies both Assumptions \ref{ass.q.sec} and \ref{ass.q.non-sec}, then the operator $T$ resulting in both cases is the same.

\subsection{Semi-angle $\theta < \pi/2$}
\label{subsec.Rd.sec}

We define the operator $T = -\Delta + Q$ through sectorial forms, \ie~via the first representation theorem, \cf~\cite[Thm.\ VI.2.1]{kato}. The potential $Q$ is viewed as a form $q$ 
that splits into two parts, $q=q_0 + w$. 

The ``regular'' part $q_0$ is generated by $Q_0 \in \LjlocRd$.
The perturbation $w$ is assumed to be bounded outside a ball $B_R(0)$ and $\|\nabla\cdot\|^2$-bounded in $L^2(B_R(0),\C)$ as forms.

Since $w$ need not be closable, also forms representing  $\delta$-like distributions comply with our assumptions.

\begin{ass}\label{ass.q.sec}
The sesquilinear form $q$ decomposes as $q=q_0 + w$ where $q_0$ and $w$ have the following properties. 
The form $q_0$ is generated by $Q_0 \in L^{1}_{\rm \loc}(\Rd,\C)$, \ie~
\begin{equation}\label{q0.def}
q_0[\cdot]:= \int_{\R^d} Q_0 |\cdot|^2\,\rd x, 
\quad \dom(q_0):=\left\{ f\in L^2(\R^d,\C) \, : \, Q_0 |f|^2\in L^1(\R^d,\C) \right\},
\end{equation}
such that
%
\begin{enumerate}[label=(\ref{ass.q.sec}.\rm{\roman{*})}]
\item \label{ass.q.sec.i}
\emph{sectoriality of $Q_0$ with semi-angle $\theta \!<\! \pi/2$}:
there exist $c_0\!>\!0$ and $\theta \!\in\! [0,\pi/2)$ \vspace{-1mm} with
\begin{equation}\label{Q0.sec}
\begin{aligned}
\re Q_0 \geq c_0,
\quad 
|\im Q_0| \leq \tan \theta \, \re Q_0
;
\end{aligned}
\end{equation}
\item \label{ass.q.sec.unbdd} \emph{unboundedness of $Q_0$ at infinity}:
\begin{equation}
|Q_0(x)| \to \infty \ \ {\rm as} \ \ |x| \to \infty.
\end{equation}
\end{enumerate}
For the form $w$, there  exist
$R>r>0$ and $\zeta \in \CcRdR$ with
\begin{equation}
\supp \zeta \subset B_R(0), \quad 0 \leq \zeta \leq 1, \quad \zeta \restriction B_r(0)=1,
\end{equation}
and sesquilinear forms $w_1, w_2$ with $W_0^{1,2}(B_R(0),\C)\!\subset\! \dom(w_1)$, $\dom(w_2)\!=\!L^2(\R^d,\C)$~with
\begin{equation}\label{eq.splitting.w}
\forall\, f\in\dom(w):\quad \sqrt{\zeta}f \in W_0^{1,2}(B_R(0),\C), \quad w[f]=w_1[\sqrt{\zeta}f]+w_2[\sqrt{1-\zeta}f],
\end{equation}
and such that 
\begin{enumerate}[label=(\ref{ass.q.sec}.\rm{\roman{*})}]
\setcounter{enumi}{2}
\item\label{ass.q.sec.w}
\emph{$\|\nabla \cdot\|^2$}-boundedness of $w_1$ in $L^2(B_R(0),\C)$: 
there exist  $a_w \geq 0$, $b_w \in [0,1)$ so that, for every $f \in W_0^{1,2}(B_R(0),\C)$,
\begin{equation}
\begin{aligned}
\quad |w_1[f] | \leq  a_w \|f \|^2 + b_w \| \nabla f \|^2;
\end{aligned}
\end{equation}
\item\label{ass.q.sec.w.2} \emph{boundedness of $w_2$ outside $B_r(0)$}:
there exists $M_w \geq 0$ so that, for every $f\in L^2(\R^d,\C)$,
\begin{equation}\label{ass.w.eq}
|w_2[(1-\chi_r) f]| \leq M_w \|f\|^2,
\end{equation}
where $\chi_r$ is the characteristic function of $B_r(0)$.
\end{enumerate}
\end{ass}

\begin{rem}
Assumption \ref{ass.q.sec.i} can be weakened to
\begin{enumerate}[label=(\ref{ass.q.sec}.\rm{\roman{*}')}]
\item \label{ass.q.sec.i'}
\emph{quasi-sectoriality of $Q_0$ with  semi-angle $\theta < \pi/2$ and rotation angle $\beta \in (-\pi/2,\pi/2)$}: 
there exist $\beta \in (-\pi/2,\pi/2)$, $\mu \in \C$, and $\theta \!\in\! [0,\pi/2)$ with
 \begin{equation}\label{quasi.m.cond}
 \begin{aligned}
 |\arg (\e^{-\I \beta}(Q_0 - \mu))| \leq \theta.
 \end{aligned}
 \vspace{-2mm}
 \end{equation}
\end{enumerate}
Then all main results, \cf~Theorems \ref{thm.norm.res.conv}, \ref{thm.conv.spectrum},  \ref{thm.conv.rate}, and \ref{thm.pseudo},
continue to hold 
if
\begin{enumerate}[label=(\ref{ass.q.sec}.\rm{\roman{*}')}]
\setcounter{enumi}{2}
\item\label{ass.q.sec.w'}
Assumption~\ref{ass.q.sec.w} holds with $b_w \in [0,\cos \beta)$.
\end{enumerate}
Note that \ref{ass.q.sec.i}, \ref{ass.q.sec.w} are the special case $\beta=0$, $\mu=0$ of \ref{ass.q.sec.i'}, \ref{ass.q.sec.w'}. 
\end{rem}

\begin{prop}\label{prop.T.sec.def}
Let Assumption \ref{ass.q.sec} be satisfied. Then 
\begin{enumerate}[\upshape i)]
\item the form $t$ given by
\begin{equation}\label{t.def}
\begin{aligned}
t  := \|\nabla \cdot \|^2 + q_0+ w,
\quad
\dom(t)  := \WotRd\cap\dom(q_0),
\end{aligned}
\end{equation}
is densely defined, closed, sectorial, and $\CcRdC$ is a core of $t$;
\item the $m$-sectorial operator $T$ uniquely determined by $t$ has compact resolvent.
\end{enumerate}
\end{prop}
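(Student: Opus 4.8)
The plan is to verify the hypotheses of the first representation theorem \cite[Thm.\ VI.2.1]{kato} for the form $t$, then deduce compactness of the resolvent via a compact-embedding argument. The core difficulty lies in controlling the non-closable perturbation $w$ relative to the sum $\|\nabla\cdot\|^2 + q_0$.

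For part i), I would first observe that the form $t_0 := \|\nabla\cdot\|^2 + q_0$ with domain $\WotRd \cap \dom(q_0)$ is densely defined, closed, and sectorial: density follows since $\CcRdC \subset \dom(t_0)$; sectoriality with semi-angle $\le\max\{\pi/4,\theta\}$ follows from $\re q_0[f] \ge c_0\|f\|^2 \ge 0$ together with $|\im q_0[f]| \le \tan\theta\,\re q_0[f]$ from \ref{ass.q.sec.i}, so that $\re t_0 \ge \|\nabla f\|^2 \ge 0$ dominates the imaginary part; closedness follows because $t_0$ is a sum of two closed nonnegative-real-part forms, or directly since $\re t_0[f] = \|\nabla f\|^2 + \re q_0[f]$ defines a norm under which $\dom(t_0)$ is complete (by Fatou, using $Q_0\in L^1_{\loc}$ and weak lower semicontinuity). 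That $\CcRdC$ is a core follows by a standard mollification-and-cutoff argument, approximating $f\in\dom(t_0)$ first by $f\chi_{B_N(0)}$ and then mollifying, using dominated convergence for the $q_0$-part. Next I would show $w$ is $t_0$-bounded with $t_0$-bound $<1$: split $w[f] = w_1[\sqrt{\zeta}f] + w_2[\sqrt{1-\zeta}f]$ as in \eqref{eq.splitting.w}. For the first term, since $\sqrt{\zeta}f \in W_0^{1,2}(B_R(0),\C)$, apply \ref{ass.q.sec.w} to get $|w_1[\sqrt{\zeta}f]| \le a_w\|\sqrt{\zeta}f\|^2 + b_w\|\nabla(\sqrt{\zeta}f)\|^2$; expanding $\nabla(\sqrt{\zeta}f)$ via the product rule and using $\zeta\in\CcRdR$ so $\nabla\sqrt{\zeta}$ is bounded on $\supp\zeta\setminus B_r(0)$ (the potential problem at $\zeta=0$ being avoided since $\sqrt{\zeta}$ need not be differentiable where $\zeta=0$, but $\sqrt{\zeta}f$ is still in $W_0^{1,2}$ by hypothesis), and then using $\|\nabla(\sqrt{\zeta}f)\|^2 \le (1+\eps)b_w^{-1}\cdot$(something)$\cdots$ — more carefully, one absorbs: $b_w\|\nabla(\sqrt{\zeta}f)\|^2 \le b_w(1+\eps)\|\sqrt{\zeta}\nabla f\|^2 + C_\eps\|f\|^2 \le b_w(1+\eps)\|\nabla f\|^2 + C_\eps\|f\|^2$, and choosing $\eps$ small keeps the coefficient of $\|\nabla f\|^2$ below $1$. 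For the second term, write $(1-\zeta)f = (1-\chi_r)g$ with $g=(1-\zeta)f$ appropriately and apply \ref{ass.q.sec.w.2} to bound $|w_2[\sqrt{1-\zeta}f]| \le M_w\|f\|^2$. Hence $|w[f]| \le b\,\|\nabla f\|^2 + a\|f\|^2 \le b'\,\re t_0[f] + a\|f\|^2$ with $b'<1$. By \cite[Thm.\ VI.1.33]{kato}, $t = t_0 + w$ is closed and sectorial on $\dom(t)=\dom(t_0)$, with the same core $\CcRdC$.

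For part ii), the first representation theorem gives a unique $m$-sectorial $T$ with $t$ as its form. To get compact resolvent, it suffices (since $0\in\rho(T)$ because $\re t[f]\ge c_0\|f\|^2 - $ absorbable terms, so $\re t[f] \ge \delta\|f\|^2$ for some $\delta>0$ after absorbing) to show that the form domain $\dom(t)$, equipped with the graph norm $\|f\|_t^2 := \re t[f] + \|f\|^2 \simeq \|\nabla f\|^2 + \re q_0[f] + \|f\|^2$, embeds compactly into $\LRd$. This is where \ref{ass.q.sec.unbdd} enters: given a bounded sequence $\{f_k\}$ in $\|\cdot\|_t$, it is bounded in $\WotRd$, hence (Rellich–Kondrachov, local version) has a subsequence converging in $L^2_{\loc}(\Rd,\C)$, say to $f$; the tails are controlled because $\int_{|x|>N}|Q_0||f_k|^2 \ge \re q_0$-type bound is bounded, and since $|Q_0(x)|\to\infty$, for any $\varepsilon$ there is $N$ with $|Q_0(x)|\ge\varepsilon^{-1}$ for $|x|>N$, giving $\int_{|x|>N}|f_k|^2 \le \varepsilon \int_{|x|>N}|Q_0||f_k|^2 \le \varepsilon\,(\tfrac{1}{\cos\theta}\re q_0[f_k] + \cdots)\le C\varepsilon$ uniformly in $k$; combined with local convergence this yields $L^2(\Rd)$-convergence. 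Therefore the embedding $\dom(t)\hookrightarrow\LRd$ is compact, and by \cite[Thm.\ VI.3.3]{kato} (or the standard fact that an $m$-sectorial operator whose form domain embeds compactly has compact resolvent) $T$ has compact resolvent.

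The main obstacle I anticipate is the bookkeeping in the $w_1$-estimate: because $w_1$ is only $\|\nabla\cdot\|^2$-bounded on $W_0^{1,2}(B_R(0))$ with bound $b_w<1$, and one must pass from $\|\nabla(\sqrt{\zeta}f)\|_{L^2(B_R)}^2$ back to $\|\nabla f\|_{L^2(\Rd)}^2$ picking up only lower-order errors, the chain of Cauchy–Schwarz/Young inequalities must be arranged so that the final coefficient of $\|\nabla f\|^2$ is strictly less than $1$ — this is exactly why the hypothesis demands $b_w<1$ (resp.\ $b_w<\cos\beta$ in the quasi-sectorial variant) rather than merely $b_w$ finite. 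Everything else is routine once this relative-boundedness with small bound is in hand.
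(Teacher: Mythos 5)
Your proposal follows essentially the same route as the paper: write $t = t_0 + w$ with $t_0 = \|\nabla\cdot\|^2 + q_0$, show $t_0$ is closed and sectorial with $\CcRdC$ a core, prove $w$ is relatively form-bounded with respect to $\re t_0$ with bound $<1$ by the same $\sqrt{\zeta}/\sqrt{1-\zeta}$ splitting and absorption, and then obtain compact resolvent by showing the form domain embeds compactly in $\LRd$ using $|Q_0(x)|\to\infty$ (your hand-crafted tail estimate is precisely Rellich's criterion, which the paper cites directly). Two small informalities worth noting but not gaps: you should use a smooth cutoff rather than $f\chi_{B_N(0)}$ in the core approximation step (the latter does not preserve $W^{1,2}$), and your parenthetical claim that $0\in\rho(T)$ is not justified in general (absorbing $w$ may cost more than $c_0$), though it is also not needed since compactness of the form-domain embedding gives compact resolvent for $m$-sectorial $T$ regardless, as you note.
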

\begin{proof}
i) We write $t$ in the form $t=t_0+w$ with 
\begin{equation*}
t_0 :=\|\nabla \cdot \|^2 +  q_0, \quad \dom(t_0):=\dom(t).
\end{equation*} 
%
By \eqref{Q0.sec}, for every $f \in \dom(t)$,  
\begin{equation}\label{sec-t0}
\left|
\im t_0[f]
\right|
 =
\left|   
\im q_0[f] 
\right|
\leq \tan \theta \, \re t_0 [f].
\end{equation}
Thus $t_0$ is sectorial and 
closed being the sum of two closed sectorial forms, \cf~\cite[Thm.\ VI.1.31]{kato}. 
The space $\CcRdC$ is a core of $t_0$ since it is a core of $\re t_0$, \cf~\cite[Thm.\ 8.2.1]{Davies-1995} and \cite[Thm.\ VI.1.21]{kato}.

Let $\zeta$ be the function used in Assumption~\ref{ass.q.sec}. Note that $\|\zeta\|_{\infty}=1$.
By Assumption~\ref{ass.q.sec.w}, \ref{ass.q.sec.w.2}, for every $f \in \dom(t),$
\begin{equation}\label{w.est}
\begin{aligned}
|w[f]| &\leq |w_1[\sqrt{\zeta} f] | + |w_2[\sqrt{1- \zeta} f]| 
\\
&\leq a_w \|\sqrt{\zeta} f\|^2 + b_w \| \nabla(\sqrt{\zeta} f)\|^2 +  M_w \|f\|^2 
\\
& \leq b_w 
\left(
\|f \,\nabla\! \sqrt{\zeta} \| + \|\sqrt{\zeta} \,\nabla \!f\|
\right)^2
 + (a_w + M_w)\|f\|^2 
\\
& \leq b_w(1 + \varepsilon) \|\nabla f\|^2 + 
\left(
a_w + M_w + b_w
\left(
1+\frac{1}{\varepsilon} 
\right)\left\| \nabla \sqrt{\zeta} \right\|_{\infty}^2
\right)
\|f\|^2 
\\
& =: b_w(1 + \varepsilon) \|\nabla f\|^2  + C_{w,\varepsilon}\|f\|^2
\end{aligned}
\end{equation}
where $\varepsilon >0$ may be chosen so small that $b_w(1+\eps)<1$. 
Note that $ \|\nabla f\|^2 \leq \re t_0[f]$ by \eqref{Q0.sec}.
Thus the form $w$ is relatively bounded with respect to $\re t_0$, and therefore also with respect to $t_0$ by \eqref{sec-t0}, with relative bound smaller than $1$. 
Hence the~form $t$ is closed and sectorial with $\dom(t)=\dom(t_0)$,  $\CcRdC$ is a core of $t$, and $t$ uniquely determines an $m$-sectorial operator $T$, 
\cf~\cite[Thm.\ VI.3.4,\ VI.1.33,\ VI.2.1~i)]{kato}.

ii) The embedding $\big(\dom(t_0)$, $(\re t_0[\cdot]\!+\!\|\cdot\|^2)^{1/2}\big)\!\hookrightarrow\! L^2(\R^d\!,\C)$ is compact by Rellich's criterion \cite[Thm.\ XIII.65]{reedsimon},
Thus, by~\eqref{w.est} and the choice of $\varepsilon$, so is the embedding $\big(\dom(t),(\re t[\cdot]+(C_{w,\eps}\!+\!1)\|\cdot\|^2)^{1/2}\big)\hookrightarrow L^2(\R^d,\C)$.
Then, by~\cite[Thm.\ XIII.64, part $\rm (iv) \Rightarrow (i)$]{reedsimon}, the selfadjoint operator $\re T$ has compact resolvent and hence so does~$T$ 
due to~\cite[Thm.\ VI.3.3]{kato}.
\end{proof}

\begin{rem}[quasi-sectorial case with semi-angle $\theta<\pi/2$]
\label{quasi.m.rem}
For potentials $Q_0$ satisfying 
\ref{ass.q.sec.i'}, \ref{ass.q.sec.w'} instead of \ref{ass.q.sec.i}, \ref{ass.q.sec.w}, 
the form $t$ uniquely determines a quasi-$m$-sectorial operator $T$ with compact resolvent.
Here quasi-$m$-sectorial means that the operator $\e^{-\I\beta}(T\!-\!\mu)$ is $m$-accretive and its numerical range $\Num (\e^{-\I\beta}(T\!-\!\mu))$~satisfies
\begin{equation}
\Num (\e^{-\I\beta}(T-\mu))\subset\left\{z\in\C:\,|\arg(z)|\leq\theta\right\},
\end{equation}
\cf~\cite[Def.\ III.6.9]{edmundsevans}. 
In fact, one may show, analogously to Proposition~\ref{prop.T.sec.def}, that the shifted and rotated form 
\begin{equation}
\begin{aligned}
\widetilde t&:= \e^{-\I \beta}\|\nabla \cdot \|^2 + \int_{\R^d} \e^{-\I \beta} (Q_0-\mu) |\cdot|^2\,\rd x + \e^{-\I \beta} w,
\\
\dom(\widetilde t) & := \WotRd \cap \{f\in L^2(\R^d,\C)\, :\, \e^{-\I \beta} (Q_0-\mu) |f|^2 \in L^1(\R^d,\C)\},
\end{aligned}
\end{equation}
uniquely determines an $m$-sectorial operator $\widetilde{T}$ with compact resolvent and
$T:= \e^{\I \beta } \widetilde T + \mu$.
Note that $b_w < \cos \beta$ guarantees the relative boundedness of $ \e^{-\I \beta} w$ with respect to $\re (\e^{-\I \beta}\|\nabla \cdot \|^2)$ with relative bound smaller than $1$.
\end{rem}
 
\subsection{Semi-angle $\theta \geq \pi/2$}
\label{subsec.Rd.non.sec}

As in the previous case, we split $Q$ into a ``regular'' part $Q_0$ and perturbations. 
However, now the essential requirement is only $\re Q_0 \geq 0$, which prevents us from using sectorial form techniques.
Instead, we introduce an $m$-accretive operator $T_0=-\Delta + Q_0$ using \cite[Thm.\ VII.2.6,\ Cor.\ VII.2.7]{edmundsevans}. 
Then we add the qualitatively new, non-positive, part $- U$ (controlled by $\im Q_0$) and the singular perturbation $W$ 
(again bounded outside a ball $B_R(0)$, but inside now $\Delta$-bounded in $L^2(B_R(0),\C)$). 

\begin{ass}\label{ass.q.non-sec}
The function $Q \in L^2_{\rm loc}(\Rd,\C)$ decomposes as
\begin{equation}
Q = Q_0 - U + W
\end{equation}
where $\re Q_0 \geq 0$, $U \geq 0$, $U \re Q_0 =0$, $W \in L^2_{\rm loc}(\Rd,\C)$, and the following hold. 
\begin{enumerate}[label=(\ref{ass.q.non-sec}.\rm{\roman{*})}]
\item \label{ass.q.non-sec.reg}
\emph{regularity of $Q_0$ and $U$}:
$Q_0 \!\in\! W^{1,\infty}_{\rm \loc}(\Rd,\C)$, $U \!\in\! L^{\infty}_{\rm \loc}(\Rd,\R)$,
and there exist $\an$, $\bn$, $\aU$, $\bU$  $\geq 0$ such that 
\begin{equation}\label{ass.q.non-sec.rb} 
\begin{aligned}
|\nabla Q_0 |^2 \leq \an + \bn |Q_0|^2,  \quad 
U^2 \leq \aU  +  \bU |\im Q_0|^2;
\end{aligned}
\end{equation}
\item \label{ass.q.non-sec.unbdd} 
\emph{unboundedness of $Q_0$ at infinity}:
\begin{equation}
|Q_0(x)| \rightarrow \infty \ \  {\rm as} \ \  \ {|x| \rightarrow \infty}.
\end{equation}
\end{enumerate}
There exist $R>r>0$ such that
\begin{enumerate}[label=(\ref{ass.q.non-sec}.\rm{\roman{*})}]
\setcounter{enumi}{2}
\item\label{ass.q.non-sec.W}
\emph{$\Delta$-boundedness of $W$ in $L^2(B_R(0),\C)$}: 
there exist $a_W \geq 0$, $b_W \in [0,1)$ such that, for every $f \in W^{2,2}(B_R(0),\C) \cap W^{1,2}_0(B_R(0),\C)$,
\begin{equation}
\|W f \|^2 \leq a_W \|f \|^2 + b_W \|\Delta f\|^2;   
\end{equation}

\item\label{ass.q.non-sec.W.2}
\emph{boundedness of $W$ outside $B_r(0)$}: 
there exists $M_W \geq 0$ such that
\begin{equation}
\|(1-\chi_r) W \|_{\infty} \leq M_{W},
\end{equation}
where $\chi_r$ is the characteristic function of $B_r(0)$.
\end{enumerate}

\end{ass}

%
%

\begin{prop}\label{prop.T.def}
Let Assumption \ref{ass.q.non-sec} be satisfied.
Then  
\begin{enumerate}[\upshape i)]
\item the minimal operator
\begin{equation}\label{Tmin.def}
\begin{aligned}
\Tm  := -\Delta + Q, 
\quad 
\dom(\Tm) := \CcRdC,
\end{aligned}
\end{equation}
is closable with closure
%
\begin{equation}
\begin{aligned}
T  \!=\! -\Delta + Q,
\quad 
\dom(T)  \!=\! \WttRd \cap \{ f \!\in\! \LRd: Q_0 f \!\in\! \LRd \};
\end{aligned}
\hspace{-12mm}
\end{equation}
\item there exist $k,\,K> 0$ such that, for every $f\in \dom(T)$,
\begin{equation}\label{norm.equiv.2}
\begin{aligned}
& 
k\left(\|\Delta f\|^2+ \|Q_0 f\|^2 + \|f\|^2\right) 
\\
&
\leq \|T f\|^2+\|f\|^2
\\
&
\leq K\left(\|\Delta f\|^2+ \|Q_0 f\|^2 + \|f\|^2\right);
\end{aligned}
\end{equation}
\item the embedding $\left(\dom(T),(\|T\cdot\|^2 + \|\cdot\|^2)^{1/2}\right) \hookrightarrow \LRd$ is compact;
\item if, in addition, $\bU <1$, then the resolvent of $T$ is compact. Moreover, for every $b' \in (\max \{\bU,b_W \}, 1)$, there exists $a_{W\!-U}(b') \geq 0$ such that the sector
\begin{equation}\label{hyperbolaset}
\hspace{1.5cm}
{\mathcal R}(b')\!:=\!
\left\{
\lambda \!\in\! \C : \re \lambda \!<\! - \frac{a_{W\!-U}(b')}{1\!-\!\sqrt{b'}}, 
|\im \lm| \!<\!  \frac{1\!-\!\sqrt{b'}}{\sqrt{b'}} |\re \lm| \!-\! \frac{a_{W\!-U}(b')}{\sqrt{b'}}
\right\} 
\end{equation}
is a subset of $\rho(T)$ and, for all $\lambda \in {\mathcal R}(b')$,
\begin{equation}\label{Res.dec}
\|(T-\lambda)^{-1}\| \leq \frac{1}{(1-\sqrt{b'})|\re \lm| - \sqrt{b'} |\im \lm| - a_{W\!-U}(b')}.
\end{equation}
\end{enumerate}
\end{prop}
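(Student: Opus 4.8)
The plan is to build $T$ from $T_0 := -\Delta + Q_0$ by treating $-U + W$ as a perturbation, exploiting the smallness bounds in \ref{ass.q.non-sec}. First I would construct $T_0$: by \cite[Thm.\ VII.2.6, Cor.\ VII.2.7]{edmundsevans}, the regularity assumptions \ref{ass.q.non-sec.reg} (in particular $Q_0\in W^{1,\infty}_{\loc}$ with $|\nabla Q_0|^2\le \an + \bn|Q_0|^2$) guarantee that $-\Delta+Q_0$ on $\CcRdC$ is essentially $m$-accretive, with closure $T_0$ having domain $\WttRd\cap\{f:Q_0 f\in\LRd\}$, and moreover a two-sided estimate $\|\Delta f\|^2 + \|Q_0 f\|^2 \lesssim \|T_0 f\|^2 + \|f\|^2 \lesssim \|\Delta f\|^2 + \|Q_0 f\|^2$ holds. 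This already gives most of ii) for $T_0$; I would then transfer it to $T$ once the perturbation is shown to be small.

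Next I would handle the perturbation $-U + W$. Split $W = \chi_R W + (1-\chi_R)W$ (using a smooth cutoff supported in $B_R(0)$ equal to $1$ on $B_r(0)$, or directly the characteristic functions as in the statement). The outer piece is bounded by \ref{ass.q.non-sec.W.2}, hence $\Delta$-bounded with relative bound $0$. For the inner piece, \ref{ass.q.non-sec.W} gives $\|Wf\|^2 \le a_W\|f\|^2 + b_W\|\Delta f\|^2$ on $W^{2,2}_0(B_R(0))$; combined with the two-sided estimate for $T_0$ this yields a bound $\|Wf\| \le a_W'\|f\| + b_W'\|T_0 f\|$ with $b_W'$ controlled by $b_W$. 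Similarly, $U \re Q_0 = 0$ forces $U$ to be supported where $\re Q_0 = 0$, and there $|\im Q_0| = |Q_0|$, so \ref{ass.q.non-sec.rb} gives $\|Uf\|^2 \le \aU\|f\|^2 + \bU\|\,|\im Q_0|\,f\|^2 \le \aU\|f\|^2 + \bU\|Q_0 f\|^2$, hence $\|Uf\| \le a_U'\|f\| + b_U'\|T_0 f\|$ with $b_U'$ essentially $\sqrt{\bU}$. So $-U+W$ is $T_0$-bounded with relative bound governed by $\max\{\sqrt{b_W},\sqrt{\bU}\}$-type quantities. At this stage, \emph{without} assuming $\bU<1$, relative bound $<1$ need not hold, but the standard trick is that $m$-accretivity is preserved under relatively bounded perturbations with relative bound $<1$ \emph{in the accretive direction} — more precisely, since $-U\le 0$ only pushes the numerical range further into the left half-plane, and $W$ is the genuinely two-sided part, I would invoke perturbation theory for $m$-accretive operators (Kato, Thm.\ IV.1.1 / the quasi-accretivity results) to conclude $\Tm = T_0 - U + W$ is closable with closure $T$, and that $\dom(T) = \dom(T_0)$ because the perturbation is $T_0$-bounded. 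That proves i); ii) then follows by combining the $T_0$ two-sided estimate with $\|(-U+W)f\|\le a'\|f\| + b'\|T_0 f\|$ and the resolvent-type inequality $\|T_0 f\| \le \|Tf\| + \|(-U+W)f\|$, rearranging (this is where one needs enough smallness to absorb — for the upper bound on $\|T_0 f\|$ in terms of $\|Tf\|$ one does need the relative bound $<1$, so part ii)'s lower inequality $k(\cdots)\le\|Tf\|^2+\|f\|^2$ might require $\bU<1$ or a more careful argument; I'd check whether the left half-plane geometry rescues the general case, and otherwise note ii) in the stated generality uses only the $W$ part being small and $-U$ being sign-definite).

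For iii), the compactness of the embedding: by ii), the graph norm of $T$ is equivalent to $(\|\Delta f\|^2 + \|Q_0 f\|^2 + \|f\|^2)^{1/2}$, and since $|Q_0(x)|\to\infty$ as $|x|\to\infty$ by \ref{ass.q.non-sec.unbdd}, Rellich-type criterion \cite[Thm.\ XIII.65]{reedsimon} applies: bounded sets in this norm are precompact in $\LRd$ (the $\|\Delta f\|$ control gives local $W^{2,2}$ compactness, the $\|Q_0 f\|$ control with $|Q_0|\to\infty$ gives tightness at infinity). This gives iii) \emph{provided} ii) holds, so iii) inherits any genuine need for $\bU<1$; but since iii) is stated without that hypothesis, the intended route is presumably that the lower bound in ii) (the $k$-inequality) does hold in general — I expect this is where the condition $U\re Q_0=0$ is crucially used, ensuring $U$ and $Q_0$ have disjoint supports so that $\|Q_0 f\|$ is controlled even when $U$ is large.

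For iv), assume now $\bU<1$. Then the perturbation $-U+W$ has $T_0$-relative bound $<1$ in the full two-sided sense, so $\dom(T)=\dom(T_0)$ with equivalent graph norms and by iii) $T$ has compact resolvent. For the resolvent estimate: $T_0$ is $m$-accretive so $\|(T_0-\lambda)^{-1}\|\le 1/|\re\lambda|$ for $\re\lambda<0$, and more usefully $\|T_0(T_0-\lambda)^{-1}\|\le C$ uniformly for $\re\lambda\le 0$ (bounded by a constant, in fact $\le 2$ or similar by the accretive functional calculus). Writing $T - \lambda = (I + (-U+W)(T_0-\lambda)^{-1})(T_0-\lambda)$ and using $\|(-U+W)(T_0-\lambda)^{-1}\| \le a'\|(T_0-\lambda)^{-1}\| + b'\|T_0(T_0-\lambda)^{-1}\| \le a'/|\re\lambda| + b'\cdot(\text{const})$, one makes this $<1$ by taking $\re\lambda$ sufficiently negative \emph{and} exploiting that for $\lambda$ in a sector opening into the left half-plane (away from the imaginary axis) the constant in $\|T_0(T_0-\lambda)^{-1}\|$ improves — indeed on $\{|\arg\lambda - \pi| \le \delta\}$ one has $\|T_0(T_0-\lambda)^{-1}\|\le 1/\sin\delta$ type bounds, and optimizing $\delta$ against $b'$ (needing $b' < 1$ so that $\sqrt{b'} < 1$) produces exactly the sector $\mathcal{R}(b')$ in \eqref{hyperbolaset} with its slope $(1-\sqrt{b'})/\sqrt{b'}$ and the Neumann-series bound \eqref{Res.dec}. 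The main obstacle, and the step I would spend the most care on, is this last quantitative optimization: getting the precise geometry of $\mathcal R(b')$ and the sharp constant in \eqref{Res.dec} requires tracking how $\|(-U+W)(T_0-\lambda)^{-1}\|$ depends on both $\re\lambda$ and $\im\lambda$ through the $m$-accretive resolvent bounds, and then solving for the region where the Neumann series converges; the bookkeeping with the four constants $\an,\bn,\aU,\bU,a_W,b_W,M_W$ feeding into a single effective $a_{W-U}(b')$ is where errors creep in. A secondary subtlety is justifying closability of $\Tm$ and the exact identification of $\dom(T)$ when only $\re Q_0\ge 0$ (not sectoriality) is available — here the disjoint-support condition $U\re Q_0 = 0$ and the $L^2_{\loc}$ hypotheses must be used to keep $T_0 f, Uf, Wf$ all in $\LRd$ simultaneously on the natural domain.
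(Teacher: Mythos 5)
Your overall framework is right: build $T_0 = -\Delta + Q_0$ from Kato's essential $m$-accretivity theorem, then treat $-U+W$ as a relatively bounded perturbation, and for iv) run a Neumann-series / Kato \cite[Thm.\ IV.3.17]{kato} argument in a sector where $\|T_0(T_0-\lambda)^{-1}\|$ is controlled. That is exactly the paper's route for i) and iv).

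However, you have flagged---but not closed---a genuine gap in ii) and iii), and your proposed fallback (``the disjoint-support condition $U \re Q_0 = 0$ ensures $\|Q_0 f\|$ is controlled even when $U$ is large'') is not how the argument actually works. The condition $U \re Q_0 = 0$ is used only to kill the cross term $2\re\langle Uf, Q_0 f\rangle = 2\int U\,\re Q_0\,|f|^2\,\mathrm{d}x = 0$ when one expands $\|(\Trm - U + W)f\|^2$; it does \emph{not} separate the supports of $U$ and $Q_0$ (only of $U$ and $\re Q_0$), and on $\supp U$ one has $|Q_0| = |\im Q_0|$, which may be exactly as large as $U$. Your estimate $\|Uf\| \le a_U'\|f\| + \sqrt{\bU}\,\|T_0 f\|$ cannot, by itself, give the lower bound of \eqref{norm.equiv.2} when $\bU \ge 1$.

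The missing idea is in the paper's proof of Lemma~\ref{lem.norm.eq}: rather than collapsing $-U+W$ into a single $T_0$-bounded perturbation with relative bound $\max\{\sqrt{\bU},\sqrt{b_W}\}$, one expands $\|\Tm f\|^2$ directly, isolates the \emph{positive} contributions $\|Uf\|^2$ and $\|Wf\|^2$ on the right-hand side, and estimates the cross terms $2\re\langle \Delta f,(U-W)f\rangle$, $2\re\langle Q_0 f, W f\rangle$, $2\re\langle Uf, Wf\rangle$ by Young's inequality with \emph{free} parameters $\alpha,\beta,\gamma$. The key point is that these parameters can always be tuned so that, after substituting $\|Uf\|^2 \le \aU\|f\|^2 + \bU\|Q_0 f\|^2$ and the $\Delta$-bound on $\|Wf\|^2$, the net coefficient in front of $\|Q_0 f\|^2$ is still positive. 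Concretely, the coefficient of $\|Uf\|^2$ that must be absorbed is $\frac1\beta - 1 + \alpha$; choosing $\beta \in \big(\frac{\bU}{\bU+1},1\big)$ (a nonempty interval for \emph{any} $\bU\ge 0$) makes $1 - \bU\big(\frac1\beta - 1\big) > 0$, and then $\alpha$ is taken small. This is why ii) and iii) hold without $\bU<1$, and why the hypothesis $\bU<1$ first appears only in iv), where one really does need the relative bound of $W-U$ with respect to $T_0$ to be smaller than $1$ in the usual sense so that the perturbed operator stays quasi-$m$-accretive with the stated sector $\mathcal R(b')$ in its resolvent set.

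A second, smaller inaccuracy: you claim $\|T_0(T_0-\lambda)^{-1}\| \le C$ uniformly for $\re\lambda \le 0$; that is false---the bound degenerates like $1 + |\im\lambda|/|\re\lambda|$ near the imaginary axis. You do recover later with a sector-dependent bound, and the paper indeed uses $\|T_0(T_0-\lambda)^{-1}\| \le 1 + |\im\lambda|/|\re\lambda|$ (via the first resolvent identity from the $m$-accretive estimates $\|(T_0-\re\lambda)^{-1}\|\le |\re\lambda|^{-1}$ and $\|T_0(T_0-\re\lambda)^{-1}\|\le 1$) and then solves for the region where $a_{W-U}(b')\|(T_0-\lambda)^{-1}\| + \sqrt{b'}\,\|T_0(T_0-\lambda)^{-1}\| < 1$, which is exactly $\mathcal R(b')$. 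So your instinct for iv) is correct; the quantitative bookkeeping is as you feared the main burden, but the formulas fall out directly from this resolvent bound.
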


\begin{rem}
Apart from the estimate of the spectrum that follows from iv), there are others which may further narrow down the spectral enclosure, at least for a certain range of $b'\in(0,1)$. 
For example, an estimate similar to the one in the proof of Lemma \ref{lem.tau0.def} shows that there exists $\widetilde a(b') \ge 0$ such that
the hyperbolic \vspace{-2mm} region
\begin{equation}\label{hyperbolaset-old}
\widetilde {\mathcal R}(b'):=
\left\{
\lambda \in \C : \re \lambda < - \sqrt{\frac{2+b'}{1-b'}\, \widetilde a(b')},  |\im \lambda|^2 < \frac{1-b'}{2+b'} | \re \lambda |^2  - \widetilde a(b')
\right\} 
\end{equation}
is a subset of $\rho(T)$ and, with some $d(b') >0$, 
\begin{equation}\label{Res.dec-old}
\|(T-\lambda)^{-1}\| \leq \frac{d(b')}{|\re \lambda|}, \quad \lambda \in \widetilde {\mathcal R}(b').
\end{equation}
In fact, the semi-angle $\widetilde \vartheta\!=\!\arctan\sqrt{\frac{1-b'}{2+b'}}$ of the asymptotes of $\widetilde {\mathcal R}(b')$ is larger than the semi-angle $\vartheta=\arctan  \frac{1\!-\!\sqrt{b'}}{\sqrt{b'}}$ of the sector  ${\mathcal R}(b')$ for $b' \!\in\! (b_0,1)$ with some $b_0\!\in\!(0,1)$, \ie~for these $b'$ the set $\C\setminus \widetilde {\mathcal R}(b')$ gives a tighter spectral enclosure than $\C\setminus {\mathcal R}(b')$; here $b_0$ is a zero of a certain cubic polynomial, $b_0\sim 0.46$.
\end{rem}

\begin{rem}\label{rem:compl}
Proposition \ref{prop.T.def} can be used to slightly extend the completeness result  in \cite{Almog-toappear}. 
Define $b\!:=\!\max\{b_U,b_W\}$ and 
$\vartheta(b)\!:=\!\arctan 
\left(
\max \big\{\frac{1-\sqrt b}{\sqrt b}, \sqrt{\frac{1-b}{2+b}} \big\}
\right)$.~If the selfadjoint
operator $(-\Delta + |Q_0|+1)^{-1}$ in $\LRd$ belongs to the Schatten class ${\mathcal S}_p$  and
\begin{equation}\label{compl.p}
p< \frac{\pi}{2(\pi-\vartheta(b))}, 
\end{equation}
then the system of eigenfunctions and associated functions of $T$ is complete.

This follows from \cite[Cor.\ XI.9.31]{DS2} combined with the bound \eqref{Res.dec} and the fact that the resolvent of $T$ belongs to ${\mathcal S}_p$  if and only if so does $(-\Delta + |Q_0|+1)^{-1}$; the latter is a consequence of \eqref{norm.equiv.2} and the second resolvent identity.

An example which cannot be cast into the setting of \cite{Almog-toappear} is 
the one-dimensional operator 
\begin{equation}\label{ix3.per}
T_{\alpha,\beta}= -\frac{{\rm d}^2}{{\rm d} x^2} + \I |x|^\beta \sgn x - \alpha |x|^\beta, \quad \beta >2, \ \alpha \in [0,1),
\end{equation}
in $L^2(\R,\C)$ for the case $\alpha \neq 0$. 
Nonetheless, our results now imply that its system of eigenfunctions and associated functions is complete if 
\begin{equation}\label{compl.beta}
\beta > 2 \left(\frac \pi{\vartheta(\alpha^2)} - 1\right);
\end{equation}
in fact, here $b_U=\alpha^2$, $b_W=0$, and the eigenvalues $\{\mu_k\}_k$ of $-{\rm d}^2/{\rm d}x^2 + |x|^\beta$ satisfy $\mu_k k^{-\frac{2\beta}{\beta+2}} \to c >0$ as $k\to\infty$, 
see \eg~\cite{Titchmarsh-1954-5}, and hence \eqref{compl.p} is equivalent to \eqref{compl.beta}.
\end{rem}

The proof of Proposition \ref{prop.T.def} uses three technical lemmas which are proved first. 
\begin{lemma}\label{lem.tau0.def}
Let Assumption \ref{ass.q.non-sec} be satisfied and define 
\begin{equation}\label{Trm.def}
\Trm  := -\Delta + Q_0, 
\quad 
\dom(\Trm) := \dom(\Tm) = \CcRdC.
\end{equation}
Then, for every $\varepsilon_1>0$, there exists $C_1(\varepsilon_1) \geq 0$
such that, for every $f \in \dom(\Trm)$,
\begin{equation}\label{tau0.norm.est}
\|\Trm f \|^2 
\geq
(1-\varepsilon_1) 
\left(
\|\Delta f\|^2 + \|Q_0 f\|^2 
\right) 
- C_1(\varepsilon_1) \|f\|^2. 
\end{equation}
\end{lemma}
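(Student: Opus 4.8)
The plan is to prove the inequality directly for $f \in \CcRdC$ by expanding $\|\Trm f\|^2 = \|-\Delta f + Q_0 f\|^2$ and controlling the cross term. First I would write
\[
\|\Trm f\|^2 = \|\Delta f\|^2 + \|Q_0 f\|^2 - 2\re\langle \Delta f, Q_0 f\rangle,
\]
so everything hinges on bounding $-2\re\langle \Delta f, Q_0 f\rangle$ from below (equivalently, bounding $|{\re}\langle \Delta f, Q_0 f\rangle|$ from above by a small multiple of $\|\Delta f\|^2 + \|Q_0 f\|^2$ plus a constant times $\|f\|^2$). Integrating by parts (legitimate on $\CcRdC$), one gets $\langle \Delta f, Q_0 f\rangle = -\langle \nabla f, \nabla(Q_0 f)\rangle = -\langle \nabla f, (\nabla Q_0) f\rangle - \langle \nabla f, Q_0 \nabla f\rangle$. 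The key structural point is that $\re\langle \nabla f, Q_0 \nabla f\rangle = \int (\re Q_0)|\nabla f|^2 \ge 0$ since $\re Q_0 \ge 0$; this term has a good sign and can simply be dropped (it only helps). So the only genuinely dangerous term is $\re\langle \nabla f, (\nabla Q_0) f\rangle$, which I would estimate by Cauchy--Schwarz as $\|\nabla f\| \, \| (\nabla Q_0) f\|$.

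Next I would use Assumption \ref{ass.q.non-sec.reg}, specifically $|\nabla Q_0|^2 \le \an + \bn|Q_0|^2$, to get $\|(\nabla Q_0)f\|^2 \le \an\|f\|^2 + \bn\|Q_0 f\|^2$, hence $\|(\nabla Q_0)f\| \le \sqrt{\an}\,\|f\| + \sqrt{\bn}\,\|Q_0 f\|$. The term $\|\nabla f\|$ must in turn be absorbed: interpolate $\|\nabla f\|^2 = -\re\langle \Delta f, f\rangle \le \|\Delta f\|\,\|f\| \le \delta\|\Delta f\|^2 + \tfrac{1}{4\delta}\|f\|^2$ for any $\delta>0$. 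Combining these with repeated applications of Young's inequality $ab \le \tfrac{\eta}{2}a^2 + \tfrac{1}{2\eta}b^2$, every occurrence of $\|\Delta f\|$ and $\|Q_0 f\|$ can be made to appear with an arbitrarily small coefficient at the cost of a large multiple of $\|f\|^2$, which is exactly the shape of \eqref{tau0.norm.est}. One then chooses the various small parameters (in terms of $\varepsilon_1$, $\an$, $\bn$) to make the coefficient of $\|\Delta f\|^2 + \|Q_0 f\|^2$ equal to $1-\varepsilon_1$, defining $C_1(\varepsilon_1)$ as the resulting constant in front of $\|f\|^2$.

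Finally, to pass from $\CcRdC$ to all $f \in \dom(\Trm) = \CcRdC$ — note $\dom(\Trm)$ is literally defined as $\CcRdC$ in \eqref{Trm.def}, so no closure argument is even needed here; the estimate on the core is the statement. (If one later wanted it on the closure, one would note that the right-hand side is continuous in the graph norm and the left-hand side lower semicontinuous, but that is not required for this lemma.) The main obstacle is purely bookkeeping: tracking how the three Young-type splittings interact so that the final coefficient is cleanly $1-\varepsilon_1$ rather than something like $1-\varepsilon_1$ up to a factor depending on $\bn$; this is handled by first absorbing the $\sqrt{\bn}\|Q_0 f\|$ contribution and the $\|\Delta f\|$ contribution separately with their own parameters, then optimizing. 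There is no conceptual difficulty — the sign of $\int(\re Q_0)|\nabla f|^2$ is what makes the whole estimate go through, and that is guaranteed by the hypothesis $\re Q_0 \ge 0$.
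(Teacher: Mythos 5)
Your proposal is correct and follows essentially the same route as the paper's proof: expand $\|\Trm f\|^2$, integrate by parts, discard the nonnegative term $\int (\re Q_0)|\nabla f|^2\,\rd x$, control $\|f\nabla Q_0\|$ via \eqref{ass.q.non-sec.rb}, interpolate $\|\nabla f\|^2\le\frac\beta2\|\Delta f\|^2+\frac1{2\beta}\|f\|^2$, and absorb via Young's inequality with small parameters. Your remark that $\dom(\Trm)=\CcRdC$ by definition, so no density/closure step is needed, is also accurate.
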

\begin{proof}
Let $\eps_1>0$.
For $f \in \dom(\Trm)$,
\begin{equation}\label{tau0.est}
\begin{aligned}
\|\Trm  f\|^2 
 =\|-\Delta f + Q_0 f \|^2 
=\|\Delta f\|^2 + \|Q_0 f\|^2  
+ 2 \re \langle -\Delta f, Q_0 f \rangle. 
\end{aligned}
\end{equation}
Using $\re Q_0  \geq 0$ and Assumption \ref{ass.q.non-sec.reg}, we obtain
\begin{equation}
\begin{aligned}
 2\re \langle -\Delta f, Q_0 f \rangle
& = 
2\re \langle \nabla f, f \nabla Q_0 + Q_0 \nabla f \rangle 
\geq 
2\re \langle \nabla f, f \nabla Q_0 \rangle 
\\
& \geq  
- 2 \| \nabla f \| \| f \nabla Q_0\| 
\geq
- \alpha \| f \nabla Q_0\|^2 - \frac 1 \alpha \| \nabla f \|^2 
\\
&
\geq
- \alpha \an \|f\|^2 - \alpha \bn \|  Q_0 f\|^2 -  \frac 1 \alpha \| \nabla f \|^2
\end{aligned}
\end{equation}
where $\alpha >0$ is arbitrary.  
Moreover, for every $\beta >0$,
\begin{equation}\label{nab.est}
\| \nabla f \|^2 
= 
\langle -\Delta f, f \rangle 
\leq
\left\|\Delta f\right\| \left\|f\right\|
\leq
\frac{\beta}{2}\left\|\Delta f\right\|^2+\frac{1}{2\beta}\left\|f\right\|^2.
\end{equation}
By inserting the above inequalities into~\eqref{tau0.est},  we obtain altogether
\begin{equation}
\|\Trm  f\|^2 
\geq
\left(
1- \frac{\beta}{2\alpha}
\right)
\|\Delta f\|^2
+
\left(
1-\alpha \bn 
\right)
\|Q_0 f\|^2 
-
\left(
\alpha \an + \frac{1}{2\alpha\beta} 
\right)
\|f\|^2.
\end{equation}
Now the claim follows if we choose $\alpha= \varepsilon_1/b_\nabla$ and $\beta=2\varepsilon_1^2/b_\nabla$.
\end{proof}

\begin{lemma}\label{lem.W.Delta}
Let Assumption \ref{ass.q.non-sec} be satisfied and let $\Trm$ be as in \eqref{Trm.def}. Then, for every $\varepsilon_2 > 0$, there exists $C_2(\varepsilon_2) \geq 0$ such that, for every $f \in \dom(\Trm)$, 
\begin{equation}
\begin{aligned}
\|Wf\|^2 
& \leq
(b_W + \varepsilon_2) \|\Delta f\|^2 + C_2 (\varepsilon_2) \|f\|^2.
\end{aligned}
\end{equation}
\end{lemma}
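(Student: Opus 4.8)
The plan is to reduce the $\Delta$-bound on the full space to the local $\Delta$-bound inside $B_R(0)$ provided by Assumption~\ref{ass.q.non-sec.W}, handling the region outside $B_r(0)$ by the pointwise bound in Assumption~\ref{ass.q.non-sec.W.2}. First I would use the cutoff strategy already present in the paper: fix $\zeta\in\CcRdR$ with $\supp\zeta\subset B_R(0)$, $0\le\zeta\le1$, and $\zeta\restriction B_r(0)=1$, and split $Wf = W\zeta f + W(1-\zeta)f$. For the second term, since $1-\zeta$ is supported outside $B_r(0)$, Assumption~\ref{ass.q.non-sec.W.2} gives $\|W(1-\zeta)f\|\le M_W\|f\|$ directly. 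For the first term, $\zeta f$ is supported in $B_R(0)$ and lies in $W^{2,2}(B_R(0),\C)\cap W^{1,2}_0(B_R(0),\C)$ whenever $f\in\dom(\Trm)=\CcRdC$, so Assumption~\ref{ass.q.non-sec.W} applies and yields $\|W\zeta f\|^2\le a_W\|\zeta f\|^2 + b_W\|\Delta(\zeta f)\|^2$.

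The key step is then to control $\|\Delta(\zeta f)\|$ by $\|\Delta f\|$ up to a small error and a multiple of $\|f\|$. Expanding $\Delta(\zeta f)=\zeta\Delta f + 2\nabla\zeta\cdot\nabla f + (\Delta\zeta)f$, the first term is bounded by $\|\Delta f\|$ since $0\le\zeta\le1$; the last term is bounded by $\|\Delta\zeta\|_\infty\|f\|$; and the cross term $\|2\nabla\zeta\cdot\nabla f\|\le 2\|\nabla\zeta\|_\infty\|\nabla f\|$ is handled by the interpolation inequality \eqref{nab.est}, namely $\|\nabla f\|^2\le\tfrac{\beta}{2}\|\Delta f\|^2+\tfrac{1}{2\beta}\|f\|^2$ for arbitrary $\beta>0$ (valid for $f\in\CcRdC$ by integration by parts, and it suffices to prove the estimate on this core). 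Squaring the sum, using $(x+y+z)^2\le(1+\delta)x^2+C_\delta(y^2+z^2)$ for a free parameter $\delta>0$, and then choosing $\beta$ and $\delta$ small enough, one obtains $b_W\|\Delta(\zeta f)\|^2\le(b_W+\varepsilon_2)\|\Delta f\|^2 + C\|f\|^2$ with $C$ depending on $\varepsilon_2$, $b_W$, $\|\nabla\zeta\|_\infty$, $\|\Delta\zeta\|_\infty$. Combining with the $a_W\|\zeta f\|^2\le a_W\|f\|^2$ term and the $M_W^2\|f\|^2$ contribution (absorbing the cross term $2\|W\zeta f\|\|W(1-\zeta)f\|$ into the squares via Young's inequality, which only worsens the constant in front of $\|\Delta f\|^2$ by a factor $1+\delta'$ that can again be absorbed by shrinking the parameters) gives the claimed bound with $C_2(\varepsilon_2)$ collecting all the $\|f\|^2$-coefficients.

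The main obstacle is purely bookkeeping: one has several free parameters ($\beta$ from \eqref{nab.est}, the $\delta$ in the squared-sum inequality, and the $\delta'$ from Young's inequality on the cross term $\|W\zeta f\|\,\|W(1-\zeta)f\|$) and must choose them in the right order so that the total coefficient of $\|\Delta f\|^2$ ends up $\le b_W+\varepsilon_2$ while keeping the $\|f\|^2$-coefficient finite. The natural order is: first fix how much room $\varepsilon_2$ leaves above $b_W$, then choose $\delta,\delta'$ to consume a fraction of that room (forcing $b_W(1+\delta)(1+\delta')\le b_W+\varepsilon_2/2$, say), then choose $\beta$ small enough that the $\tfrac{\beta}{2}$-term contributes the remaining $\varepsilon_2/2$. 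There is also the minor point that Assumption~\ref{ass.q.non-sec.W} is stated for $f\in W^{2,2}(B_R(0),\C)\cap W^{1,2}_0(B_R(0),\C)$ rather than for $\zeta f$ with $f\in\CcRdC$; since $\zeta f$ is smooth and compactly supported in $B_R(0)$, it certainly belongs to this space, so no density argument is needed here, and the extension to all of $\dom(\Trm)$ (if required later) follows because $\CcRdC$ is a core and both sides are continuous in the graph norm of $\Trm$ by Lemma~\ref{lem.tau0.def}.
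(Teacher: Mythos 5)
Your proposal takes essentially the same route as the paper: fix a cutoff (denoted $\eta$ in the paper rather than $\zeta$) supported in $B_R(0)$ and equal to $1$ on $B_r(0)$, split $Wf=W\eta f+W(1-\eta)f$, bound the second term by $M_W\|f\|$ using Assumption~\ref{ass.q.non-sec.W.2}, apply Assumption~\ref{ass.q.non-sec.W} to $\eta f$, expand $\Delta(\eta f)$ by the product rule, and control $\|\nabla f\|$ via \eqref{nab.est}. The paper compresses the final parameter bookkeeping into the phrase ``the proof can be completed by straightforward estimates using \eqref{nab.est}''; you carry that bookkeeping out explicitly, which is correct.
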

\begin{proof}
With the radii $R>r>0$ used in Assumption~\ref{ass.q.non-sec}, we fix $\eta \in C_0^{\infty}(B_R(0),\R)$ such that $0 \leq \eta \leq 1$ and $\eta \restriction B_r(0) = 1$.
Since $\eta f \in C_0^{\infty}(B_R(0),\C)$ for every $f \in \CcRdC$, it follows from Assumptions~\ref{ass.q.non-sec.W},~\ref{ass.q.non-sec.W.2} that
\begin{equation}
\begin{aligned}
\|Wf\| & \leq \|W \eta f\| + \|W (1-\eta) f\| 
\leq
\|W \eta f\| + M_W \|f\|,
\\
\|W \eta f\|^2 & \leq a_W\|\eta f\|^2 + b_W \|\Delta (\eta f)\|^2.
\end{aligned}
\end{equation}
Moreover, we have $\Delta(\eta f) = (\Delta \eta) f + 2 \nabla \eta . \nabla f  + \eta \Delta f$, and the proof can be completed by straightforward estimates using \eqref{nab.est}.
\end{proof}
\begin{lemma}\label{lem.norm.eq}
Let Assumption~\ref{ass.q.non-sec} be satisfied and let $\Tm$ be as in \eqref{Tmin.def}. Then there exist $k,\,K> 0$ such that, for every $f\in \dom(\Tm)$,
\begin{equation}\label{norm.equiv}
\begin{aligned}
& 
k
\left(
\|\Delta f\|^2+ \|Q_0f\|^2 + \|f\|^2
\right) 
\\
&
\leq 
\|\Tm f\|^2+\|f\|^2
\\
&
\leq 
K
\left(
\|\Delta f\|^2+ \|Q_0 f\|^2 + \|f\|^2
\right).
\end{aligned}
\end{equation}

\end{lemma}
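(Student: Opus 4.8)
The plan is to combine the previous two lemmas with a triangle-inequality argument, treating the singular part $W$ (which is controlled by $\Delta$, with $b_W<1$) and the non-positive part $-U$ (which is controlled by $\im Q_0$, hence by $Q_0$, and therefore mild) as perturbations of the $m$-accretive operator $\Trm = -\Delta + Q_0$. For the \emph{lower} bound, I would first note that for $f\in\dom(\Tm)=\CcRdC$ one has $\Tm f = \Trm f - Uf + Wf$, so
\begin{equation*}
\|\Trm f\| \leq \|\Tm f\| + \|Uf\| + \|Wf\|.
\end{equation*}
By Assumption~\ref{ass.q.non-sec.reg} (specifically $U^2\le \aU + \bU|\im Q_0|^2$ and $|\im Q_0|\le |Q_0|$), $\|Uf\|^2 \le \aU\|f\|^2 + \bU\|Q_0 f\|^2$; by Lemma~\ref{lem.W.Delta}, $\|Wf\|^2 \le (b_W+\eps_2)\|\Delta f\|^2 + C_2(\eps_2)\|f\|^2$. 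Squaring, using $(x+y+z)^2 \le 3(x^2+y^2+z^2)$ (or a weighted Cauchy--Schwarz to keep constants sharp), gives $\|\Trm f\|^2 \le 3\|\Tm f\|^2 + 3\bU\|Q_0 f\|^2 + 3(b_W+\eps_2)\|\Delta f\|^2 + C\|f\|^2$. Now feed in Lemma~\ref{lem.tau0.def}: $\|\Trm f\|^2 \ge (1-\eps_1)(\|\Delta f\|^2 + \|Q_0 f\|^2) - C_1(\eps_1)\|f\|^2$. Comparing the two, the terms $3\bU\|Q_0 f\|^2$ and $3(b_W+\eps_2)\|\Delta f\|^2$ must be absorbed by the left side; one then picks $\eps_1,\eps_2$ small enough that the coefficients $(1-\eps_1)-3(b_W+\eps_2)$ and $(1-\eps_1)$ are matched against the coefficients coming from the $\Tm$-side, solving for $\|\Delta f\|^2$ and $\|Q_0 f\|^2$ in terms of $\|\Tm f\|^2+\|f\|^2$ to obtain the lower bound with some $k>0$. (A cleaner route is to run Lemmas~\ref{lem.tau0.def}--\ref{lem.W.Delta} directly for $\Tm$ in place of $\Trm$, expanding $\|\Tm f\|^2 = \|-\Delta f + Q_0 f - Uf + Wf\|^2$ and estimating the cross terms as in the proof of Lemma~\ref{lem.tau0.def}; I would present it this way to get the constants in one pass.)

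For the \emph{upper} bound the argument is the elementary direction: $\|\Tm f\| \le \|\Delta f\| + \|Q_0 f\| + \|Uf\| + \|Wf\|$, and again $\|Uf\|^2 \le \aU\|f\|^2 + \bU\|Q_0 f\|^2$, $\|Wf\|^2 \le (b_W+\eps_2)\|\Delta f\|^2 + C_2(\eps_2)\|f\|^2$ bound the last two terms by the first two plus $\|f\|^2$, so squaring yields $\|\Tm f\|^2 + \|f\|^2 \le K(\|\Delta f\|^2 + \|Q_0 f\|^2 + \|f\|^2)$ for a suitable $K$; no smallness of any constant is needed here.

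The main obstacle is purely bookkeeping: keeping track of the interplay of the several small parameters ($\eps_1$ from Lemma~\ref{lem.tau0.def}, $\eps_2$ from Lemma~\ref{lem.W.Delta}, plus whatever splitting constant is used in the squared triangle inequality) so that, in the lower bound, the coefficient of $\|\Delta f\|^2$ stays strictly positive after subtracting the $b_W$-contribution and the coefficient of $\|Q_0 f\|^2$ stays strictly positive after subtracting the $\bU$-contribution. This is exactly where the hypotheses $b_W<1$ (from Assumption~\ref{ass.q.non-sec.W}) and $\bU$ arbitrary but finite (from Assumption~\ref{ass.q.non-sec.reg}) enter: $b_W<1$ is essential, while $\bU$ need not be less than $1$ at this stage because $\|Q_0 f\|^2$ also appears with a full-strength coefficient on the left coming from $\|\Trm f\|^2$ (the restriction $\bU<1$ only becomes relevant later, in part iv) of Proposition~\ref{prop.T.def}, for the resolvent/sector estimate). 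Once the parameters are fixed, the inequality \eqref{norm.equiv} follows, and passing to the closure $T$ (part ii) of Proposition~\ref{prop.T.def}) then gives \eqref{norm.equiv.2} by density, since all three quantities $\|\Delta f\|$, $\|Q_0 f\|$, $\|Tf\|$ are closed on $\dom(T)$.
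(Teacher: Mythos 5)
Your primary route (triangle inequality $\|\Trm f\|\le\|\Tm f\|+\|Uf\|+\|Wf\|$, then square) has a genuine gap. Whatever weights you put into the squared triangle inequality, the coefficients of $\|Uf\|^2$ and $\|Wf\|^2$ on the right cannot simultaneously be brought below $2$, so after substituting $\|Uf\|^2\le \aU\|f\|^2+\bU\|Q_0 f\|^2$ and $\|Wf\|^2\le (b_W+\eps_2)\|\Delta f\|^2+C_2\|f\|^2$ and comparing with the lower bound from Lemma~\ref{lem.tau0.def}, you are forced to require something like $b_W<1/2$ and $\bU<1/2$. The lemma only assumes $b_W<1$, and places \emph{no} upper bound on $\bU$, so the triangle-inequality approach simply cannot reach the stated result. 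Your remark that ``$\bU$ need not be less than $1$ at this stage'' is true of the correct proof but not of the argument you are running: in your bookkeeping the $3\bU\|Q_0 f\|^2$ (or $2(1+\eta)\bU\|Q_0 f\|^2$) term must be subtracted from the single $(1-\eps_1)\|Q_0 f\|^2$ coming from Lemma~\ref{lem.tau0.def}, which dies unless $\bU$ is small.

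Your parenthetical alternative --- expand $\|\Tm f\|^2=\|-\Delta f+Q_0 f-Uf+Wf\|^2$ and estimate cross terms --- is the route the paper takes, but the proposal omits the two ideas that make it work. First, the structural hypothesis $U\,\re Q_0=0$ from Assumption~\ref{ass.q.non-sec}: since $U$ is real, $\re\langle Uf,Q_0 f\rangle=\int U\,\re Q_0\,|f|^2\,\rd x$, so this cross term vanishes identically; without invoking this you would have an uncontrollable term of size $\|Uf\|\,\|Q_0 f\|$ (your proposal never mentions $U\re Q_0=0$ at all). Second, the expansion keeps the positive terms $\|Uf\|^2$ and $\|Wf\|^2$ on the right: the remaining cross terms $2\re\langle\Delta f,(U-W)f\rangle$ and $2\re\langle Q_0 f,Wf\rangle$ are split into the region $B_r(0)$ (where $U,W$ are merely locally bounded) and its complement (where $W$ is uniformly bounded by Assumption~\ref{ass.q.non-sec.W.2}), producing contributions like $(1/\beta)\|Uf\|^2$ and $(1/\gamma)\|Wf\|^2$ that are then absorbed into the positive $\|Uf\|^2,\|Wf\|^2$ terms by taking $\beta,\gamma$ close to $1$. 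This is precisely why the paper needs only $b_W<1$ and \emph{no} bound on $\bU$: the parameter $\beta$ is chosen in the interval $\bU/(\bU+1)<\beta<1$, making $\bU(1/\beta-1)<1$ regardless of how large $\bU$ is. None of this mechanism appears in the proposal, and without it the ``cleaner one-pass'' version would face the same absorption problem as the triangle-inequality version.

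The upper bound as you sketch it is correct.
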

\begin{proof}
The upper bound in \eqref{norm.equiv} is immediate from Assumption~\ref{ass.q.non-sec.reg} and Lemma~\ref{lem.W.Delta} as $\Tm=\Trm-U+W$. To show the lower bound, we start from
\begin{equation}\label{tau.est}
\begin{aligned}
\| (\Trm  - U + W) f \|^2 
& =
\|\Trm f \|^2 + \|U f\|^2 + \|W f\|^2 
-
2 \re \langle U f, W f \rangle
\\
&\quad 
+
2 \re \langle \Delta f, (U -W) f \rangle 
+
2 \re \langle Q_0 f, W f \rangle,
\end{aligned}
\end{equation}
where we used $ 2 \re \langle Uf, Q_0 f \rangle=0 $ since $U \re Q_0 =0$ by Assumption \ref{ass.q.non-sec}.
We set $\widetilde \chi_r:=1-\chi_r$ where $\chi_r$ is the characteristic function of $B_r(0)$.
Using Assumptions \ref{ass.q.non-sec.reg} and \ref{ass.q.non-sec.W.2},  we obtain that, for arbitrary $\alpha,\,\beta,\,\gamma>0$, 
\begin{equation}\label{WU.est}
\begin{aligned}
2|\langle U f, W f \rangle | & \leq
2|\langle U f, \widetilde \chi_r W f \rangle | + 2|\langle \chi_r U f, W  f \rangle | 
\\[1mm]
&\leq 
2 M_W \|U f\| \|f\|  + 2 \| W f\| \|\chi_r U \|_{\infty} \|f\|
\\
& \leq
\alpha
\left(
\|U f\|^2 + \| W f\|^2
\right) 
+ 
\frac{1}{\alpha}
\left(
M_W^2 + \|\chi_r U \|_{\infty}^2
\right) 
\|f\|^2 ,
\\[1mm]
2 |\langle \Delta f, U f \rangle| & \leq
2 |\langle \widetilde \chi_r \Delta f, U f \rangle| + 2 |\langle \chi_r \Delta f, U f \rangle|
\\
&\leq
\beta \|\widetilde \chi_r \Delta f \|^2 + \frac{1}{\beta} \|U f\|^2 
+ \alpha \|\chi_r \Delta f\|^2 
+ \frac{1}{\alpha} \|\chi_{r}  U \|_{\infty}^2 \|f\|^2, 
\end{aligned} \vspace{-2mm}
\end{equation}
and, \vspace{-1mm} analogously,
\begin{equation}\label{WU.est2}
\begin{aligned}
2 | \langle \Delta f, W f \rangle| &  \leq
\gamma \|\chi_r \Delta f \|^2 + \frac{1}{\gamma} \|Wf\|^2 + 
\alpha \|\widetilde \chi_r \Delta f\|^2 + \frac{1}{\alpha} M_W^2 \|f\|^2,
\\[-1mm]
2|\langle Q_0 f, W f \rangle| & \leq 
\alpha
\left(
\|Q_0 f \|^2 + \|W f\|^2
\right)
+
\frac{1}{\alpha}
\left(
M_W^2 + \|Q_0 \chi_r\|_{\infty}^2
\right)
\|f\|^2. 
\hspace{15mm}
\end{aligned}
\end{equation}
Inserting these estimates into~\eqref{tau.est} and applying Lemma \ref{lem.tau0.def} with arbitrary $\eps_1>0$, we conclude that there exists $C_3(\eps_1, \alpha)\geq 0$ such that
\begin{equation}\label{eq.Tmf.est}
\begin{aligned}
 \|\Tm f\|^2
&=\| (\Trm  - U + W) f \|^2 \\
&\geq 
\left(
1- \epsilon_1
\right)
\left(
\|\Delta f\|^2
+
\|Q_0 f\|^2 
\right)
+
\|Uf\|^2 + \|W f\|^2
\\
&\quad
-
\beta \|\widetilde \chi_r \Delta f\|^2 
- \gamma \|\chi_r \Delta f\|^2 
-\frac{1}{\beta}
\|Uf\|^2 
- \frac{1}{\gamma} \|W f\|^2
\\
& \quad
-
\alpha
\left(
\|\Delta f\|^2 + 2 \|Wf\|^2 + \|Uf\|^2 + \|Q_0 f\|^2 
\right)
-
C_3(\eps_1, \alpha)
\|f\|^2
\\
& \geq 
\left(
1-\varepsilon_1 -\max\{\beta,\gamma\} - \alpha
\right)
\|\Delta f\|^2 + 
\left(
1-\varepsilon_1 - \alpha
\right)
\|Q_0 f\|^2
\\
& \quad 
-
\Big(
\frac{1}{\beta} \!+\! \alpha \!-\! 1
\Big)
\|Uf\|^2
-
\Big(
\frac{1}{\gamma} \!+\! 2\alpha \!-\! 1
\Big)
\|Wf\|^2
- C_3(\eps_1, \alpha)
\|f\|^2.
\end{aligned}
\end{equation}
We choose $\varepsilon_2 >0$ so small that $b_W':=b_W + \varepsilon_2 <1$.
Then, for $\beta$ and $\gamma$ such that
$\bU /(\bU+1) < \beta < 1$
and $\max\{b_W',\beta\} < \gamma <1$, 
we have
\begin{equation}
b:=1-\bU \left(\frac{1}{\beta}-1\right)>0,
\quad
c:=1-\max\{\beta,\gamma\} - b_W' \left(\frac{1}{\gamma}-1\right)>0.
\end{equation}
In order to further estimate $\|Uf\|^2$, $\|Wf\|^2$ in~\eqref{eq.Tmf.est}, we 
note that, since $\beta$, $\gamma<1$, their coefficients satisfy $  \frac{1}{\beta} + \alpha - 1>0$ and $\frac{1}{\gamma} + 2\alpha -1>0$.
Assumption~\ref{ass.q.non-sec.reg} and Lemma~\ref{lem.W.Delta}, applied with the chosen $\eps_2$, 
imply that there exists $C_4(\eps_1, \alpha)\geq 0$~with
\begin{equation}
\begin{aligned}
\hspace{-2mm} \|\Tm f\|^2\!
&\geq\! 
\left(
1-\varepsilon_1 -\max\{\beta,\gamma\} - \alpha
-
b_W'
\Big(
\frac{1}{\gamma} + 2\alpha -1
\Big) 
\right)
\|\Delta f\|^2
\\
& \quad
+
\left(
1-\varepsilon_1 - \alpha
-
\bU \Big(
\frac{1}{\beta} + \alpha - 1
\Big)
\right)
\|Q_0f\|^2
-
C_4(\eps_1, \alpha) \|f\|^2\\
&=\!
\big(c\!-\!\eps_1\!-\!\alpha(1\!+\!2b_W')\big)
\|\Delta f\|^2
\!\!+\!
\big(b\!-\!\eps_1\!-\!\alpha(1+b_U)\big)
\|Q_0f\|^2
\!\!-\!
C_4(\eps_1, \alpha) \|f\|^2
\!.\!
\end{aligned}
\end{equation}
Finally, choosing $\varepsilon_1$ and $\alpha$ sufficiently small, we find that there exists $C\geq 0$ with
\begin{equation}
\begin{aligned}
 \|\Tm f\|^2
&\geq
C
\left(
\|\Delta f\|^2+ \|Q_0 f\|^2
\right)
-
C_4(\eps_1, \alpha) \|f\|^2,
\end{aligned}
\end{equation}
and hence
\begin{align*}
 \big(C_4(\eps_1, \alpha) +1\big) \left(\|\Tm f\|^2+\|f\|^2\right) 
 & \geq \|\Tm f\|^2+ \big( C_4(\eps_1, \alpha) + 1 \big) \|f\|^2 \\
 & \geq C \left( \|\Delta f\|^2 + \|Q_0 f\|^2 \right) + \|f\|^2.
\end{align*}
Now the lower bound in \eqref{norm.equiv} follows with $k:= \min\{C,1\}/(C_4(\eps_1, \alpha) + 1)$.
%
\end{proof}

\begin{proof}[Proof of Proposition {\rm \ref{prop.T.def}}]
 i) Since $\re Q_0 \geq 0$, the operator $\Trm$ is closable and its closure $T_0$
has the domain
\begin{equation}
\dom(T_0)=\left\{f\in \WotRd: \, (-\Delta + Q_0) f\in \LRd \right\}, 
 \end{equation}
\cf~\cite[Cor.\ VII.2.7]{edmundsevans}. 
Lemma~\ref{lem.norm.eq} applied to $\Tm$ and $\Trm$ (which is $\Tm$ with $U = W =0$) yields the existence of $k,\, K, \,k_0,\,K_0>0$ so that, for every $f\in \dom(\Tm)$,
\begin{equation}\label{taus.equiv}
\begin{aligned}
\frac{k}{K_0}
\left(
\|\Trm f\|^2+\|f\|^2
\right)
 \leq 
\|\Tm f\|^2+\|f\|^2
 \leq 
\frac{K}{k_0}
\left(
\|\Trm f\|^2+\|f\|^2
\right).
\end{aligned}
\end{equation}
Hence $\Tm$ is closable as well and its closure $T$ satisfies $\dom(T)=\dom(T_0)$.
The in\-clusion $\WttRd \cap \{ f \in \LRd: Q_0 f \in \LRd \} \subset \dom(T)$ is obvious. 
It remains to prove the opposite inclusion. 
Since $\dom(\Tm)\!=\!\CcRdC$ is a core of~$T$, Lemma~\ref{lem.norm.eq} 
and the equivalence of $(\|\Delta \cdot\|^2 + \|\cdot\|^2)^{1/2}$ with $\|\cdot\|_{\WttRd}$	
imply~that 
\begin{equation}
\begin{aligned}
\dom(T) &= \overline{\CcRdC}^{\, (\|T\cdot\|^2 + \|\cdot\|^2)^\frac 12} 
= \overline{\CcRdC}^{\, (\|\cdot\|_{\WttRd}^2 + \|Q_0 \cdot\|^2)^\frac 12}
\\
& \subset \WttRd \cap \{ f \!\in\! \LRd: Q_0 f \!\in\! \LRd \}. 
\end{aligned}
\end{equation}


ii) The claim follows from Lemma~\ref{lem.norm.eq} and the fact that $\dom(\Tm)$ is a core of~$T$.

iii)
The embedding $\left(\dom(T),(\|T\cdot\|^2 + \|\cdot\|^2)^{1/2}\right) \hookrightarrow \LRd$ is compact due to~\eqref{norm.equiv.2} and Rellich's criterion \cite[Thm.\ XIII.65]{reedsimon}. 

iv)
The compactness of the resolvent follows from claim iii) if we know that $\rho(T)\neq\emptyset$.
This will follow from the remaining claims in iv) since  $\mathcal R(b')\ne \emptyset$.

To prove that $\mathcal R(b') \subset\rho(T)$ for every $b'\in (\max\{b_U,b_W\},1)$,
we first observe that, for every $f \in \dom(\Trm)=\CcRdC$, the first estimate in \eqref{WU.est} yields 
\begin{equation}
\begin{aligned}
\|(W - U)f \|^2 & \leq 
(1+\alpha)
\left(
\|U f\|^2  + \|Wf\|^2
\right)
+  C(\alpha)\|f\|^2
\end{aligned}
\end{equation}
where $\alpha>0$ is arbitrary and $ C(\alpha)\geq 0$. 
Assumption~\ref{ass.q.non-sec.reg},  Lemma~\ref{lem.W.Delta} applied with $\eps_2=\alpha b / (1+\alpha)$,  and Lemma~\ref{lem.tau0.def} applied with $\eps_1=\alpha/(1+3\alpha)$, 
imply the existence of 
$\widetilde C_1(\alpha)$, $\widetilde C_2(\alpha)\geq 0$ such that, for all $f \in \dom(\Tm)$ and with $b:=\max\{b_U,b_W\}$,
\begin{equation}
\begin{aligned}
\|(W - U)f \|^2 & \leq 
b(1+2\alpha) 
\left(
\|\Delta f\|^2 + \| Q_0 f\|^2
\right)
+ \widetilde C_1(\alpha) \|f\|^2
\\
& \leq
b(1+3\alpha)
\|T_0 f\|^2 +
\widetilde C_2(\alpha)
\|f\|^2.
\end{aligned}
\end{equation}
The latter remains valid  for all $f \in \dom(T_0)$ since $\dom(\Trm)$ is a core of $T_0$. 

If $b'\in (\max\{b_U,b_W\},1)=(b,1)$ is arbitrary, we choose $\alpha$ such that $b' = b(1+3\alpha)$
and so there exists $a_{W\!-U}(b') \geq 0$ such that, for every $f\in\dom(T_0)$,
\begin{equation}\label{W-U}
\|(W - U)f \| 
\leq 
a_{W\!-U}(b') \|f\| + \sqrt{b'} \|T_0 f\|.
\end{equation}

Now let $\lm \in {\mathcal R}(b')$. We verify the assumptions of \cite[Thm.\ IV.3.17]{kato} with the unperturbed operator chosen as $T_0$, the perturbation as $W-U$, and $\zeta=\lm$. 
Because $T_0$ is $m$-accretive and $\lm\in{\mathcal R}(b')$ satisfies $\re \lm<0$, we have
$\re \lm\in\rho(T_0)$, \linebreak 
$\|(T_0 - \lm )^{-1}\| \leq |\re \lambda|^{-1}$, and $\|T_0(T_0-\re\lm)^{-1}\| \leq 1$, \cf~\cite[Sec.\ V.10, Prob.\ V.3.31]{kato}.
Notice that the first resolvent identity yields
\begin{align}
\|T_0(T_0-\lambda)^{-1}\| 
& = 
\|T_0(T_0-\re \lambda)^{-1}(I + \I \im \lm(T_0- \lambda)^{-1} ) \|
\leq
1+ \frac{|\im \lm|}{|\re \lm|}.  
\end{align}
Hence, for all $\lm \in {\mathcal R}(b')$,
\begin{equation}
\begin{aligned}
& 
a_{W\!-U}(b') \|(T_0\!-\!\lm)^{-\!1} \| \!+\! \sqrt{b'} \|T_0(T_0\!-\! \lm)^{-\!1}\| 
\!\leq\! 
\frac{a_{W\!-U}(b') + \sqrt{b'}  |\im \lm| }{|\re\lm|}  \!+\! \sqrt{b'}
\!\!<\! 1,
\end{aligned}
\end{equation}
and so the inequality \cite[IV.(3.12)]{kato} holds. 
Thus \cite[Thm.\ IV.3.17]{kato} implies both $\lm \in \rho(T_0- U + W )$ and the estimate \eqref{norm.equiv.2}. 
\end{proof}

\section{Approximating operators in $\Omega_n \subset \R^d$}
\label{sec.Tn}

In this section, we define an approximating sequence $\{T_n\}_n$ of operators $T_n$ in $\Llam$ where $\Omega_n \subset \Rd$ are bounded domains, \ie~open and connected subsets, that exhaust~$\Rd$ eventually. 
In order to work with operators with non-empty resolvent sets, we need to specify boundary conditions. 

If the aim is to approximate $T$ with simple operators $T_n$, then one can choose $\Omega_n$ for instance as expanding balls and impose Dirichlet boundary conditions. 
If the aim is to compare, or optimize, the convergence rate for the approximate eigenvalues, it may be necessary to consider other, more general, boundary conditions such as Robin conditions or 
mixed Dirichlet-Robin conditions.

Our approximation results cover both situations. For Dirichlet conditions only, we do not require any regularity of the boundary $\partial \Omega_n$. For mixed Dirichlet-Robin conditions on
$\partial \Omega_n = \partial \Omega_n^{\rm D} \, \dot\cup \,  \partial \Omega_n^{\rm R}$, formally given by
\[
  f \restriction \partial \Omega_n^{\rm D} =0, \quad (\partial_\nu f + a_n f) \restriction \partial \Omega_n^{\rm R} =0
\]
where $\partial_\nu$ is the normal derivative on $\partial \Omega_n^{\rm R}$,
we assume $\partial \Omega_n$ is Lipschitz and the functions $a_n\!:\!\partial \Omega_n^{\rm R} \!\to\! \C$ are suitably~bounded, \cf~Assumption \ref{ass.lam.n}. 

\begin{ass}
\label{ass.lam.n}
Let $\{\Omega_n\}_n \subset \Rd$ be a sequence of 
bounded domains satisfying
 %
 \begin{equation}\label{partial.Om.split}
 \partial \Omega_n = \partial \Omega_n^{\rm D} \, \dot\cup \,  \partial \Omega_n^{\rm R}
 \vspace{-1mm}
 \end{equation}
where $\partial \Omega_n^{\rm D}$ is closed and the following hold.
\begin{enumerate}[label=(\ref{ass.lam.n}.\rm{\roman{*})}]
\item \label{ass.lam.n.ex} 
\emph{exhausting property}:
with the radius $R>0$  used in Assumption \ref{ass.q.sec} or \ref{ass.q.non-sec},
there exists $\{r_n\}_n \subset \R$, $r_1>R$,  such that 
\begin{equation}
\overline{B_{r_n+1}(0)} \subset \Omega_n, \quad r_{n+1} > r_n, \quad r_n \rightarrow  \infty.
\end{equation}
\end{enumerate}
If $\partial \Omega_n^{\rm R} \neq \emptyset$ and $d\geq 2$, we additionally assume
\begin{enumerate}[label=(\ref{ass.lam.n}.\rm{\roman{*})}]
\setcounter{enumi}{1}	
\item \label{ass.lam.n.div}
\emph{regularity of $\partial\Omega_n$}: 
$\Omega_n$ is Lipschitz.
\end{enumerate}
%
If $a_n\neq0$, $n \in \N$,  we further assume
\begin{enumerate}[label=(\ref{ass.lam.n}.\rm{\roman{*})}]
\setcounter{enumi}{2}	
\item \label{ass.lam.n.a}
\emph{control of Robin boundary terms}: $a_n \in L^{\infty}(\partial \Omega_n^{\rm R},\C)$, $n\in\N$, and 
\begin{equation}\label{Tr.contr}
M_{\rm Tr}:=\sup_n \|a_n\|_{\infty} K_n <  \infty
\end{equation}
where $K_n>0$ are the constants in the trace embedding 
\begin{equation}\label{emb.est.n}
\int_{\partial \Omega_n} |f|^p \d \sigma 
\leq 
K_n 
\left(
\varepsilon^{1-\frac{1}{p}}
\|\nabla f \|^p_{L^p(\Omega_n,\C)}  +
\varepsilon^{-\frac{1}{p}}
\| f \|^p_{L^p(\Omega_n,\C)}
\right)
\end{equation}
valid for all $f \!\in\! W^{1,p}(\Omega_n,\C)$, $\varepsilon \!\in\! (0,1)$, and $p \!\geq\! 1$, \cf~\cite[Thm.~1.5.1.10]{Grisvard-1985-24}. 
\end{enumerate}
\end{ass}



\begin{rem}
i) For balls or boxes, it can be shown that the constants $K_n$ are uniformly bounded;
then the condition \eqref{Tr.contr} reduces to $ \sup_n \|a_n\|_{\infty} < \infty$. 

ii) 
Sometimes, \eg~in Propositions~\ref{prop.Tn.def.sec},~\ref{prop.comp.res} below, we indicate the dependence of the constants on the constant $M_{\rm Tr}$ in \eqref{Tr.contr}.
\end{rem}

The operators $T_n$ are introduced in several steps, analogously to the definition of~$T$ in the previous section. The main difference is in the first step, \cf~Lemma \ref{lem.s0.def}, where 
we first introduce a Dirichlet-Robin Laplacian  $S_{0,n}:=-\Delta_n^{\rm DR}$ in $\Llam$ via its quadratic form, see \eg~\cite[Sec.\ 7]{Davies-1995} for more details on this approach.

We remark that if $Q$ satisfies both Assumptions \ref{ass.q.sec} and \ref{ass.q.non-sec}, then also the approximating operators $T_n$ introduced in the two different ways coincide. 

\begin{lemma}\label{lem.s0.def}
Let Assumption~\ref{ass.lam.n} be satisfied. Then, for every $n \in \N$, the form
\begin{equation}\label{s0n.def}
\begin{aligned}
s_{0,n}&:= \|\nabla \cdot \|_n^2 + \int_{\partial \Omega_n^{\rm R}} a_n |\cdot|^2 \d \sigma, 
\quad 
\dom(s_{0,n}):=\overline{\DD_n}^{\, \|\cdot\|_\Wotlam},
\end{aligned}
\end{equation}
with 
\begin{equation}\label{Dn.def}
\begin{aligned}
\DD_n&\!:=\!\big \{
f \!\in\! C^{\infty}(\Omega_n,\C) : \exists f_0 \! \in \! \CcRdC, f \!=\! f_0 \restriction \Omega_n, 
 \, \supp f \cap \partial \Omega_n^{\rm D} \!=\! \emptyset 
\big
\}
\end{aligned}
\end{equation}
is densely defined, closed and sectorial and it uniquely determines an $m$-sectorial operator $S_{0,n} = -\Delta_n^{\rm DR}$ which has compact resolvent.
\end{lemma}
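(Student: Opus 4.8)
The plan is to verify, in order, that $s_{0,n}$ is (a) densely defined, (b) sectorial, (c) closed, and then invoke the first representation theorem \cite[Thm.\ VI.2.1]{kato} to obtain the $m$-sectorial operator $S_{0,n}=-\Delta_n^{\rm DR}$, and finally deduce compactness of the resolvent from Rellich's criterion. Dense definedness is immediate: by \eqref{Dn.def} and \eqref{s0n.def}, $\dom(s_{0,n})$ is by construction the closure of $\DD_n$ in $\Wotlam$, and $\DD_n$ contains $C_0^\infty(\Omega_n,\C)$ (take $f_0\in\CcRdC$ with $\supp f_0\subset\Omega_n$), which is dense in $\Llam$.

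The heart of the matter is the simultaneous handling of sectoriality and closedness, and the main obstacle is controlling the boundary term $\int_{\partial\Omega_n^{\rm R}}a_n|f|^2\,\d\sigma$. First I would split $s_{0,n}=\|\nabla\cdot\|_n^2 + b_n$ where $b_n[f]:=\int_{\partial\Omega_n^{\rm R}}a_n|f|^2\,\d\sigma$ is the perturbing form with $\dom(b_n)\supset\Wotlam$ (well-defined by \ref{ass.lam.n.div} and the trace embedding \eqref{emb.est.n} with $p=2$). Applying \eqref{emb.est.n} with $p=2$ and $\|a_n\|_\infty$ yields, for every $\eps\in(0,1)$ and $f\in\Wotlam$,
\begin{equation}\label{plan.btrace}
|b_n[f]| \leq \|a_n\|_\infty K_n\left(\eps^{1/2}\|\nabla f\|_n^2 + \eps^{-1/2}\|f\|_n^2\right) \leq M_{\rm Tr}\left(\eps^{1/2}\|\nabla f\|_n^2 + \eps^{-1/2}\|f\|_n^2\right),
\end{equation}
using the uniform bound \eqref{Tr.contr}. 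Hence $b_n$ is relatively bounded with respect to the (nonnegative, closed) form $\|\nabla\cdot\|_n^2$ restricted to $\dom(s_{0,n})$ with relative bound $0$: choosing $\eps$ small makes $M_{\rm Tr}\eps^{1/2}<1$. By the stability theorem for sectorial forms under relatively bounded perturbations with relative bound $<1$, \cf~\cite[Thm.\ VI.1.33]{kato}, the form $s_{0,n}$ is sectorial and closed on $\dom(s_{0,n})$, since $\|\nabla\cdot\|_n^2$ is closed there by definition (it is the $\Wotlam$-closure of $\DD_n$, and the $\Wotlam$-norm is equivalent to $(\|\nabla\cdot\|_n^2+\|\cdot\|_n^2)^{1/2}$ on that space). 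One subtlety: for $d=1$ or $\partial\Omega_n^{\rm R}=\emptyset$, Assumption \ref{ass.lam.n.div} is not assumed, but then either $b_n=0$ or the trace is a point evaluation controlled by the one-dimensional Sobolev embedding, so \eqref{plan.btrace} still holds with an appropriate constant; I would note this case briefly.

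With $s_{0,n}$ densely defined, closed, and sectorial, \cite[Thm.\ VI.2.1]{kato} furnishes a unique $m$-sectorial operator $S_{0,n}$, which we denote $-\Delta_n^{\rm DR}$. For the compact resolvent, I would argue as in the proof of Proposition~\ref{prop.T.sec.def}ii): the form norm $(\re s_{0,n}[\cdot]+C\|\cdot\|_n^2)^{1/2}$ is, by \eqref{plan.btrace} with a fixed small $\eps$, equivalent to the $\Wotlam$-norm on $\dom(s_{0,n})$ for $C$ large enough; since $\Omega_n$ is bounded, the embedding $\Wotlam\hookrightarrow\Llam$ is compact by Rellich's theorem \cite[Thm.\ XIII.65]{reedsimon} (and for general bounded $\Omega_n$ without boundary regularity this still holds for the subspace $\dom(s_{0,n})$, which sits between $W_0^{1,2}(\Omega_n,\C)$ and $\Wotlam$, because $\Omega_n$ can be enclosed in a large ball and functions extended, or because $W_0^{1,2}$ of any bounded set embeds compactly). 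Hence $\re S_{0,n}$ has compact resolvent by \cite[Thm.\ XIII.64]{reedsimon}, and so does $S_{0,n}$ by \cite[Thm.\ VI.3.3]{kato}. The only point requiring a little care is the compactness of the embedding when $\partial\Omega_n$ is merely Lipschitz (or arbitrary in the Dirichlet case); I expect this, together with the precise verification that $\|\nabla\cdot\|_n^2$ is closed on the specified form domain, to be the main technical obstacle.
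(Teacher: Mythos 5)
Your proof takes essentially the same route as the paper: split off the nonnegative closed form $\|\nabla\cdot\|_n^2$ on $\dom(s_{0,n})$, show the Robin boundary term is relatively form-bounded with bound $0$ via the trace embedding \eqref{emb.est.n} and \eqref{Tr.contr}, apply \cite[Thm.\ VI.1.33, VI.2.1]{kato}, and then deduce compact resolvent from Rellich--Kondrachov and \cite[Thm.\ VI.3.3]{kato}. One small correction: your parenthetical assertion that the compact embedding of $\dom(s_{0,n})$ holds ``for general bounded $\Omega_n$ without boundary regularity'' by an extension argument is not correct -- $W^{1,2}$-extension itself requires some boundary regularity -- but the issue never arises, since when $\partial\Omega_n^{\rm R}\neq\emptyset$ and $d\geq 2$ Assumption~\ref{ass.lam.n.div} guarantees $\Omega_n$ Lipschitz, and otherwise $\dom(s_{0,n})=W_0^{1,2}(\Omega_n,\C)$, for which compactness needs no regularity, exactly as you observe in your closing sentence.
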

\begin{proof}
First observe that 	
\begin{equation}\label{Dom.s0n.encl}
W_0^{1,2}(\Omega_n, \C) \subset 
\overline{\DD_n}^{\, \|\cdot\|_\Wotlam}
\subset \Wotlam.
\end{equation}
The symmetric form $\| \nabla \cdot\|_n^2$ defined on $\dom(s_{0,n})$ is densely defined and closed since $(\dom(s_{0,n}),\|\cdot\|_\Wotlam)$ is complete, \cf~\cite[Thm.\ VI.1.11]{kato}.
The boundary trace embedding~\eqref{emb.est.n}, applied with $p=2$ and arbitrarily small $\eps>0$, together with~\eqref{Tr.contr} implies that the boundary term in~\eqref{s0n.def} is a relatively bounded perturbation of $\| \nabla\cdot \|_n^2$ defined on $\dom(s_{0,n})$ with relative bound $0$. By \cite[Thm.\ VI.1.33]{kato}, the form $s_{0,n}$ is densely defined, closed, and sectorial, hence it uniquely determines an $m$-sectorial operator $S_{0,n}$, \cf~\cite[Thm.\ VI.2.1]{kato}.

Moreover, for sufficiently large $c>0$, the norm $(\re s_{0,n}[\cdot] + c \|\cdot\|_n^2)^{1/2}$ is equivalent to $\|\cdot\|_\Wotlam$. Then, by the Rellich-Kondrachov theorem \cite[Thm.~6.3]{Adams-2003}, 
$(\dom(s_{0,n}), (\re s_{0,n}[\cdot] + c \|\cdot\|_n^2)^{1/2})$ is compactly embedded in $\Llam$. Thus
$\re S_{0,n}$ has compact resolvent and hence so does $S_{0,n}$, \cf~\cite[Thm.~VI.3.3]{kato}.
\end{proof}

\begin{rem}
i) If $\Omega_n$ are sufficiently regular, \eg~$\partial \Omega_n$ is of class $C^2$, and either 
$\partial \Omega_n^{\rm R} = \emptyset$ or $\partial \Omega_n^{\rm D} = \emptyset$ where,  
in the latter case, 
$a_n \in W^{1,\infty}(\partial \Omega_n,\C)$, then in Lemma~\ref{lem.s0.def} the usual domains of Dirichlet or Robin Laplacian are \vspace{-1mm} recovered,
\begin{equation}
\begin{aligned}
\dom(-\Delta_n^{\rm D}) &= W^{2,2}(\Omega_n,\C) \cap W_0^{1,2}(\Omega_n,\C), 
\\
\dom(-\Delta_n^{\rm R})&= \{ f \in W^{2,2}(\Omega_n,\C) \, : \, 
(\partial_\nu f + a_n f) \restriction \partial \Omega_n =0 \},
\end{aligned}
\end{equation}
where $\partial_\nu$ denotes the normal derivative on $\partial \Omega_n=\partial \Omega_n^{\rm R}$.

ii) If the splitting $\partial \Omega_n = \partial \Omega_n^{\rm D} \, \dot\cup \,  \partial \Omega_n^{\rm R}$ satisfies additional, 
very technical, regularity assumptions, \cf~\cite[Prop.\ 3.1]{Kriz-2003-PhD}, \vspace{-1mm}  then 
\begin{equation}
\overline{\DD_n}^{\, \|\cdot\|_\Wotlam}
= 
\left\{
f \in \Wotlam : f \restriction \partial \Omega_n^{\rm D} = 0 \ { \rm a.e.}
\right\}.
\end{equation}
\end{rem}

\subsection{Semi-angle $\theta < \pi/2$}
\label{subsec.Tn.sec}

In this case, the operator $T_n$ is introduced in one step by perturbation arguments using quadratic forms. 

\begin{prop}\label{prop.Tn.def.sec}
Let Assumptions~\ref{ass.q.sec},~\ref{ass.lam.n} be satisfied and let $s_{0,n}$, $q_0$ be the forms defined in \eqref{s0n.def}, \eqref{q0.def}, respectively. 
Then 
\begin{enumerate}[\upshape i)]
\item  for every $n \in \N$,  the form
\begin{equation}\label{tn.def}
t_n := s_{0,n} + q_0 + w, 
\quad
\dom(t_n) := \dom(s_{0,n}) \cap \dom(q_0),
\end{equation}
is densely defined, closed, and sectorial, and it uniquely determines an $m$-sectorial operator $T_n$ which has compact resolvent.
\vspace{1mm}
\item the sequence $\{T_n\}_n$ is uniformly quasi-sectorial with semi-angle $<\pi/2$, \ie~there exist $\mu_0(M_{\rm Tr}) \in \C$ and $\theta_0(M_{\rm Tr}) \in[0,\pi/2)$  such that 
the numerical ranges and spectra of all $T_n$ are contained in the uniform sector
\begin{equation}\label{Tn.sec.S}
\sigma(T_n) \subset
\Num ( T_n) \subset \mathcal S(M_{\rm Tr}) :=\{ z \in \C: |\arg (z-\mu_0(M_{\rm Tr}))| \leq \theta_0(M_{\rm Tr}) \}.
\end{equation}
\end{enumerate}
\end{prop}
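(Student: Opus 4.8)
The plan is to mirror the structure of Proposition~\ref{prop.T.sec.def}, replacing the free Laplacian form $\|\nabla\cdot\|^2$ by the Dirichlet--Robin form $s_{0,n}$ from Lemma~\ref{lem.s0.def}, and then to extract uniformity in $n$ from the trace bound \eqref{Tr.contr}. For part~i), I would first record that $\dom(s_{0,n})\subset\Wotlam$ and that, by Lemma~\ref{lem.s0.def}, $s_{0,n}$ is densely defined, closed and sectorial; the form $q_0$ is closed and sectorial on its domain by \eqref{Q0.sec} exactly as in the proof of Proposition~\ref{prop.T.sec.def}. Hence $s_{0,n}+q_0$ is closed and sectorial as a sum of two closed sectorial forms, \cf~\cite[Thm.\ VI.1.31]{kato}. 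Then I need to show $w$ is relatively bounded with respect to $\re(s_{0,n}+q_0)$ with relative bound $<1$: since $\supp\zeta\subset B_R(0)\subset\overline{B_{r_n+1}(0)}\subset\Omega_n$ by Assumption~\ref{ass.lam.n.ex}, the function $\sqrt{\zeta}f$ lies in $W_0^{1,2}(B_R(0),\C)$ for $f\in\dom(t_n)$ (the Dirichlet/Robin boundary conditions on $\partial\Omega_n$ play no role near $B_R(0)$), so the estimate \eqref{w.est} goes through verbatim, giving $|w[f]|\le b_w(1+\eps)\|\nabla f\|_n^2 + C_{w,\eps}\|f\|_n^2$ with $b_w(1+\eps)<1$. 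Combining with $\|\nabla f\|_n^2\le\re s_{0,n}[f] + c\|f\|_n^2$ (the equivalence noted in the proof of Lemma~\ref{lem.s0.def}) and $\|\nabla f\|_n^2\le\re(s_{0,n}+q_0)[f]+c\|f\|_n^2$, one gets the desired relative bound, so $t_n$ is closed and sectorial and determines an $m$-sectorial $T_n$ via \cite[Thm.\ VI.3.4,\ VI.1.33,\ VI.2.1]{kato}. Compactness of the resolvent follows as in part~ii) of Proposition~\ref{prop.T.sec.def}: the form $\re t_n$ dominates $\re s_{0,n}+c_0\|\cdot\|_n^2$ which has compactly embedded form domain by Rellich--Kondrachov (as in Lemma~\ref{lem.s0.def}), and then one applies \cite[Thm.\ XIII.64]{reedsimon} and \cite[Thm.\ VI.3.3]{kato}.

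For part~ii) the key point is that all the constants appearing above can be chosen independently of $n$. The constant $c$ in $\|\nabla f\|_n^2\le\re s_{0,n}[f]+c\|f\|_n^2$ is controlled purely by $M_{\rm Tr}$ via the trace embedding \eqref{emb.est.n} with $p=2$ and a fixed small $\eps$: indeed $|\int_{\partial\Omega_n^{\rm R}}a_n|f|^2\d\sigma|\le\|a_n\|_\infty K_n(\eps\|\nabla f\|_n^2+\eps^{-1}\|f\|_n^2)=M_{\rm Tr}(\eps\|\nabla f\|_n^2+\eps^{-1}\|f\|_n^2)$, so with $\eps=1/(2M_{\rm Tr})$ (say) one obtains $\re s_{0,n}[f]\ge\tfrac12\|\nabla f\|_n^2 - 2M_{\rm Tr}^2\|f\|_n^2$ and likewise $|\im s_{0,n}[f]|\le\tfrac12\|\nabla f\|_n^2+2M_{\rm Tr}^2\|f\|_n^2$, all with the same constants for every $n$. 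The constants $a_w$, $b_w$, $M_w$, $C_{w,\eps}$, $\|\nabla\sqrt\zeta\|_\infty$ in \eqref{w.est} do not depend on $n$ since they only involve the fixed ball $B_R(0)$ and the fixed cutoff $\zeta$, and $c_0$, $\theta$ from \eqref{Q0.sec} are $n$-independent. Assembling these, there are $\mu_0(M_{\rm Tr})\in\R$ and $\theta_0(M_{\rm Tr})\in[0,\pi/2)$, depending only on $M_{\rm Tr}$, $b_w$, $a_w$, $M_w$, $c_0$, $\theta$, such that $\re t_n[f]\ge\mu_0(M_{\rm Tr})\|f\|_n^2$ and $|\im t_n[f]|\le\tan\theta_0(M_{\rm Tr})\,(\re t_n[f]-\mu_0(M_{\rm Tr})\|f\|_n^2)$ for all $n$ and all $f\in\dom(t_n)$; that is, $\Num(t_n)\subset\mathcal S(M_{\rm Tr})$. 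Since the numerical range of an $m$-sectorial operator is contained in that of its form and the spectrum is contained in the closure of the numerical range, \cf~\cite[Thm.\ VI.3.1]{kato}, this yields \eqref{Tn.sec.S}.

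The routine parts are the form-sum bookkeeping and the standard Rellich/Kondrachov compactness argument, which are essentially identical to the proofs of Proposition~\ref{prop.T.sec.def} and Lemma~\ref{lem.s0.def}. The main obstacle --- and the only genuinely new point --- is the uniformity in part~ii): one must be careful that the only $n$-dependent quantity entering the estimates, namely $\|a_n\|_\infty K_n$, is precisely what \eqref{Tr.contr} bounds, and that the relative-bound constant $b_w(1+\eps)<1$ (which governs $\theta_0$) can be kept away from $1$ uniformly, which it can because $b_w$, $\eps$ are fixed. I would write part~ii) by carrying the explicit constants through \eqref{w.est} and the trace estimate rather than invoking the qualitative statements, so that the dependence on $M_{\rm Tr}$ is transparent.
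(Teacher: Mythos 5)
Your proof is correct and follows essentially the same route as the paper's: form the closed sectorial sum $s_{0,n}+q_0$ via \cite[Thm.\ VI.1.31]{kato}, observe that $B_R(0)\subset\Omega_n$ makes $\sqrt\zeta f\in W_0^{1,2}(B_R(0),\C)$ so that the estimate~\eqref{w.est} carries over and gives relative form-boundedness of $w$ with bound $<1$, obtain compact resolvent from the Rellich--Kondrachov embedding of the (shifted) form domain, and in~ii) track the $n$-independence of all constants via the trace bound~\eqref{Tr.contr}. The only slight imprecision is the phrase ``$\re t_n$ dominates $\re s_{0,n}+c_0\|\cdot\|_n^2$''---since $\re w$ can be negative one must first add a sufficiently large multiple of $\|\cdot\|_n^2$ and absorb a small multiple of $\|\nabla\cdot\|_n^2$ (exactly as~\eqref{w.est} permits since $b_w(1+\eps)<1$)---but this is a trivial repair and the compactness conclusion is unaffected.
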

\begin{proof}
i)
The form 
\begin{equation}\label{T0n.def.sec}
t_{0,n}:= s_{0,n}+ q_0, \quad \dom(t_{0,n}):=\dom( s_{0,n})\cap\dom(q_0),
\end{equation}
 is the sum of two closed sectorial forms, hence it is closed and sectorial as well, \cf~\cite[Thm.\ VI.1.31]{kato}.
So it  uniquely determines an $m$-sectorial operator $T_{0,n}$. 
Notice that  $\re s_{0,n}[f] \leq \re t_{0,n}[f]$ for all $f \in \dom(t_{0,n})$ and, 
with sufficiently large $c>0$, $(\re s_{0,n}[\cdot] + c \|\cdot\|_n^2)^{1/2}$ is compactly embedded in $\Llam$, \cf~the proof of Lemma~\ref{lem.s0.def} for details. Hence $(\dom(t_{0,n}), (\re t_{0,n}[\cdot] + 
c
\|\cdot\|_n^2)^{1/2})$ is compactly embedded in $\Llam$ and consequently the resolvent of $\re T_{0,n}$ is compact.

By the trace embedding~\eqref{emb.est.n} and Assumption~\ref{ass.lam.n.a}, the boundary term in~\eqref{s0n.def} is relatively bounded with respect to $\|\nabla \cdot\|_n^2$ with relative bound~$0$.

For the form $w$ we first note that, by assumption~\eqref{eq.splitting.w} in Assumption~\ref{ass.q.sec}, for every $f\in \dom(t_n)\subset \Wotlam$ 
we have $\sqrt{\zeta}f\in W_0^{1,2}(B_R(0),\C)\subset\dom(w_1)$ and thus
$w[f] = w_1[\sqrt{\zeta}f] + w_2[\sqrt{1-\zeta}f]$ is well-defined.
Using analogous arguments as in the proof of Proposition \ref{prop.T.sec.def}, 
one can verify that the form $w$ is relatively bounded with respect to 
$\re t_{0,n}+c$  with relative bound smaller than $1$. 
Hence $t_n$ uniquely determines an $m$-sectorial operator $T_n$, \cf~\cite[Thm.\ VI.3.4]{kato}. The latter has compact resolvent since the resolvent of $\re T_n$ is compact, \cf~\cite[Thm.\ VI.3.3]{kato}.

ii)
Using the trace embedding~\eqref{emb.est.n}, Assumptions~\ref{ass.lam.n.a},~\ref{ass.q.sec.i} and the estimate~ \eqref{w.est} on $|w|$, we obtain
\begin{equation}
\begin{aligned}
|\im t_n[f]| 
& \leq
 \bigg|\int_{\partial \Omega_n^{\rm R}} a_n |f|^2 \d  \sigma\bigg|+|\im q_0[f]|+|w[f]|
\\[-1mm]
&\leq 
 \|a_n\|_{\infty} \int_{\partial \Omega_n^{\rm R}} |f|^2 \d  \sigma + \tan \theta \re q_0[f] 
+ b_w(1+\varepsilon) \| \nabla f\|_n^2 + C_{w,\varepsilon} \|f\|_n^2
\\[-1mm]
& \leq (\sqrt{\varepsilon} M_{\rm Tr} + b_w (1+\varepsilon) ) \| \nabla f \|_n^2 +
\tan \theta \re q_0[f] 
+ 
\left(
\frac{M_{\rm Tr}}{\sqrt{\varepsilon}} + C_{w,\varepsilon}
\right)
\|f\|_n^2
\\
&\leq 
\widetilde C_1(\eps,M_{\rm Tr}) 
\left(
\| \nabla f \|_n^2 + \re  q_0[f]
\right)
+\widetilde C_2(\eps,M_{\rm Tr}) \|f\|_n^2.
\end{aligned}
\end{equation}
\vspace{-1mm} Similarly,
\begin{equation}\label{eq.retn.vs.nablaf}
\begin{aligned}
\re t_n[f] 
& \geq 
 (1 \!-\! \sqrt{\varepsilon} M_{\rm Tr} \!-\! b_w (1+\varepsilon) ) \| \nabla f \|_n^2 \!+\!
\re q_0[f]
\!-\! 
\left(
\frac{M_{\rm Tr}}{\sqrt{\varepsilon}} \!+\! C_{w,\varepsilon}
\right)
\|f\|_n^2
\\
&\geq 
\widetilde C_3(\eps,M_{\rm Tr}) 
\left(
\| \nabla f \|_n^2 + \re q_0[f]
\right)
- \widetilde C_4(\eps, M_{\rm Tr}) \|f\|_n^2.
\end{aligned}
\end{equation}
Since $\widetilde C_{i}(\eps,M_{\rm Tr})$, $i=1,\dots,4$, 
are independent of $n$ and positive for $\eps>0$ sufficiently small
because $b_w\in [0,1)$, it follows that, for all $f \in \dom(t_n)$,
\begin{align*}
|\im  t_n[f]| &\leq \frac{\widetilde C_1(\eps,M_{\rm Tr})}{\widetilde C_3(\eps,M_{\rm Tr})} 
\left( 
\re  t_n[f] +\widetilde C_4(\eps,M_{\rm Tr}) \|f\|_n^2
\right)
+\widetilde C_2(\eps,M_{\rm Tr}) \|f\|_n^2.
\end{align*}
Now the inclusion \eqref{Tn.sec.S} for the numerical range follows easily.
The inclusion for the spectrum follows \eg~ since $T_n$ has compact resolvent and so $\rho(T_n) \cap (\C\setminus {\mathcal S}(M_{\rm Tr})) \!\ne\! \emptyset$, \cf~\cite[Thm.~V.3.2]{kato}. 
\end{proof}

\subsection{Semi-angle $\theta \geq \pi/2$}
\label{subsec.Tn.non-sec}

Since $Q_0$ is assumed to be locally bounded, \cf~Assumption~\ref{ass.q.non-sec.reg},
$Q_0f$  is well-defined for all functions  $f\in L^2(\Omega_n,\C)$.
We define $T_{0,n}$  as the operator sum
\begin{equation}\label{T0n.def.non.sec}
\begin{aligned}
T_{0,n} := S_{0,n}+Q_0,
\quad 
\dom (T_{0,n}) := \dom (S_{0,n}), \quad  n \in \N.
\end{aligned}
\end{equation}
%
The operator $T_n$ is introduced in the following proposition by further adding the locally bounded non-positive part $-U$ and the ``singular'' part
$W$ which turns out to be $\DnDR$-bounded with relative bound smaller than 1, \cf~Lemma~\ref{lem.T0n.W}. 
%
\begin{prop}\label{prop.comp.res}
Let Assumptions~\ref{ass.q.non-sec},~\ref{ass.lam.n} be satisfied and let $T_{0,n}$ be as in \eqref{T0n.def.non.sec}.
Then, for every $n \in \N$, 
\begin{enumerate}[\upshape i)]
\item the operator
\begin{equation}
T_n:=T_{0,n} - U + W, \quad \dom(T_n):=\dom(T_{0,n}),
\end{equation}
is closed and has compact resolvent;
\item 
there exist $\widetilde k(M_{\rm Tr}),\, \widetilde K(M_{\rm Tr})> 0$, independent of $n$, such that, for every \mbox{$f\in\dom(T_n)$},
\begin{equation}\label{Tn.graph.norm}
\begin{aligned}
& \widetilde k(M_{\rm Tr})
\left(
\|\DnDR f\|_n^2+ \|Q_0 f\|_n^2 +  \|f\|_n^2
\right) 
\\
&\leq \|T_n f\|_n^2+\|f\|_n^2
\\
&\leq \widetilde K(M_{\rm Tr})
\left(
\|\DnDR f\|_n^2+ \|Q_0 f\|_n^2 + \|f\|_n^2
\right);
\end{aligned}
\end{equation}
\item \label{last!} if \,$\bU\!<\!1$, then
the sequence $\{T_n\}_n$ is uniformly quasi-sectorial, more precisely,~for every $b' \!\in\! (\max \{\bU,b_W\}, 1)$, there is $a_{W\!-U}(b'\!,M_{\rm Tr}) \ge 0$, 
independent of~$n$, such that
the sector
\begin{equation}\label{hyperbolaset2}
\begin{aligned}
\!\!\!\!\mathcal R(b'\!,M_{\rm Tr})\!:=\!
\bigg\{ \!
\lambda \!\in\! \C \!: & 
\re \lambda < \!-\! M_{\rm Tr}^2 - \frac{a_{W\!-U}(b'\!,M_{\rm Tr})}{1\!\!-\!\!\sqrt{b'}}\!, \\
& |\im \lm| \!<\! \frac{1\!\!-\!\!\sqrt{b'}}{\sqrt{b'}} |\re \lm \!+\! M_{\rm Tr}^2| \!-\! \frac{a_{W\!-U}(b'\!,M_{\rm Tr})}{\sqrt{b'}}
\!\bigg\} \hspace{-9mm} 
\end{aligned}
\end{equation}
is a subset of $\rho(T_n)$ for all $n \in \N$ and, for all $\lambda \in \mathcal R(b',M_{\rm Tr})$,
\begin{equation}\label{Tn.res.bound}
\|(T_n\!-\!\lambda)^{-1}\| \!\leq\! \frac{1}{(1\!\!-\!\!\sqrt{b'})|\re \lm\!+\!M_{\rm Tr}^2| \!-\! \sqrt{b'}|\im \lm| \!-\! a_{W\!-U}(b'\!,M_{\rm Tr}) 
}. 
\hspace{-8mm}
\end{equation}
\end{enumerate}
\end{prop}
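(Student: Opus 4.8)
The plan is to mirror the structure of the $\R^d$ case in Proposition~\ref{prop.T.def}, carrying out all estimates with constants independent of $n$, and exploiting the uniform trace control $M_{\rm Tr}$ from Assumption~\ref{ass.lam.n.a}. The starting observation is that $S_{0,n}=-\DnDR$ is $m$-sectorial with compact resolvent by Lemma~\ref{lem.s0.def}, and moreover its sectoriality semi-angle is uniformly bounded in $n$ because the Robin boundary term is $\|\nabla\cdot\|_n^2$-relatively bounded with relative bound controlled only by $M_{\rm Tr}$ (again via~\eqref{emb.est.n} with small $\eps$). I would first record a uniform version of the key identity from the proof of Lemma~\ref{lem.tau0.def}: for $f\in\dom(T_{0,n})$, since $\re Q_0\ge 0$ one has
\begin{equation*}
\|T_{0,n}f\|_n^2 = \|\DnDR f\|_n^2 + \|Q_0 f\|_n^2 + 2\re\langle -\DnDR f, Q_0 f\rangle_n,
\end{equation*}
and the cross term is handled exactly as in that lemma using $2\re\langle\nabla f,f\nabla Q_0\rangle_n$ together with Assumption~\ref{ass.q.non-sec.rb} and the bound $\|\nabla f\|_n^2=\langle -\DnDR f,f\rangle_n + (\text{Robin term})$, where the Robin term is absorbed using~\eqref{emb.est.n}. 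The only subtlety versus $\R^d$ is that $\langle -\DnDR f,f\rangle_n$ is no longer exactly $\|\nabla f\|_n^2$ when $a_n\ne 0$; but the difference is the boundary integral, which is $\eps\|\nabla f\|_n^2 + C(\eps,M_{\rm Tr})\|f\|_n^2$, so for small $\eps$ it is harmlessly absorbed, leaving constants depending only on $b_\nabla,a_\nabla$ and $M_{\rm Tr}$. This yields the analogue of~\eqref{tau0.norm.est} uniformly in $n$.

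Next I would establish the uniform $\DnDR$-boundedness of $W$: for $f\in\dom(T_{0,n})$ and a cutoff $\eta$ supported in $B_R(0)$ with $\eta=1$ on $B_r(0)$ (legitimate since $\overline{B_{r_n+1}(0)}\subset\Omega_n$ by Assumption~\ref{ass.lam.n.ex}, so $\eta f\in W^{2,2}(B_R(0),\C)\cap W_0^{1,2}(B_R(0),\C)$ and $\DnDR(\eta f)=-\Delta(\eta f)$ acts as the ordinary Laplacian near the support of $\eta$), Assumptions~\ref{ass.q.non-sec.W},~\ref{ass.q.non-sec.W.2} give $\|Wf\|_n^2\le (b_W+\eps_2)\|\DnDR f\|_n^2 + C_2(\eps_2)\|f\|_n^2$, all constants $n$-independent; this is the verbatim transcription of Lemma~\ref{lem.W.Delta}. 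With these two ingredients in hand, the uniform two-sided graph-norm equivalence~\eqref{Tn.graph.norm} follows by repeating the computation in the proof of Lemma~\ref{lem.norm.eq}: expanding $\|(T_{0,n}-U+W)f\|_n^2$, using $U\re Q_0=0$ to kill $\re\langle Uf,Q_0 f\rangle_n$, splitting the $U$ and $W$ cross terms at radius $r$ via $\chi_r$ (the $\|\chi_r U\|_\infty$ and $M_W$ constants are intrinsic, not $n$-dependent), and choosing the auxiliary parameters $\alpha,\beta,\gamma,\eps_1,\eps_2$ exactly as there. This proves ii), and then i) is immediate: $T_n$ is closed because it is a relatively bounded (with bound $<1$) perturbation of the closed operator $T_{0,n}$, and it has compact resolvent by~\eqref{Tn.graph.norm} combined with the Rellich--Kondrachov theorem on the bounded domain $\Omega_n$ (or, more cleanly, since the graph norm dominates $\|\DnDR\cdot\|_n+\|\cdot\|_n$ and $S_{0,n}$ has compact resolvent), provided $\rho(T_n)\ne\emptyset$, which will come from iii).

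For iii), assuming $b_U<1$, I would first derive the uniform relative bound of $W-U$ with respect to $T_{0,n}$: exactly as in the proof of Proposition~\ref{prop.T.def}~iv), one gets $\|(W-U)f\|_n\le a_{W-U}(b',M_{\rm Tr})\|f\|_n + \sqrt{b'}\,\|T_{0,n}f\|_n$ for $f\in\dom(T_{0,n})$ and any $b'\in(\max\{b_U,b_W\},1)$, with $a_{W-U}(b',M_{\rm Tr})$ independent of $n$ (the $M_{\rm Tr}$-dependence entering only through the uniform version of Lemma~\ref{lem.tau0.def}). The one extra point is to control the numerical range of $T_{0,n}$: since $S_{0,n}$ is $m$-sectorial with $\re s_{0,n}[f]\ge -M_{\rm Tr}^2\|f\|_n^2 + (1-\eps)\|\nabla f\|_n^2$ (from absorbing the Robin term) and $\re Q_0\ge 0$, the shifted operator $T_{0,n}+M_{\rm Tr}^2$ is $m$-accretive; hence for $\lambda$ with $\re\lambda<-M_{\rm Tr}^2$ we have $\re\lambda\in\rho(T_{0,n})$ with $\|(T_{0,n}-\re\lambda)^{-1}\|\le |\re\lambda+M_{\rm Tr}^2|^{-1}$ and $\|T_{0,n}(T_{0,n}-\re\lambda)^{-1}\|\le 1$. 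Then for $\lambda\in\mathcal R(b',M_{\rm Tr})$ the first resolvent identity gives $\|T_{0,n}(T_{0,n}-\lambda)^{-1}\|\le 1 + |\im\lambda|/|\re\lambda+M_{\rm Tr}^2|$, and the condition defining $\mathcal R(b',M_{\rm Tr})$ is precisely what makes $a_{W-U}\|(T_{0,n}-\lambda)^{-1}\| + \sqrt{b'}\,\|T_{0,n}(T_{0,n}-\lambda)^{-1}\|<1$, so \cite[Thm.~IV.3.17]{kato} yields both $\lambda\in\rho(T_n)$ and the bound~\eqref{Tn.res.bound}. The main obstacle is bookkeeping: tracking that every constant produced along the chain (Lemmas~\ref{lem.tau0.def}--\ref{lem.norm.eq}, the $W-U$ bound, the numerical-range shift) depends on $\Omega_n$ only through $M_{\rm Tr}$ and not through, say, the Lipschitz character or diameter of $\Omega_n$ — this is what the uniform trace embedding~\eqref{emb.est.n} with its $\eps$-rescaling is designed to deliver, and it is the crux of the uniformity claim.
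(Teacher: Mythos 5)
Your proposal follows essentially the same architecture as the paper's proof: uniform versions of Lemmas~\ref{lem.tau0.def} and~\ref{lem.W.Delta} (which are exactly the paper's Lemmas~\ref{lem.T0n.g.norm} and~\ref{lem.T0n.W}), then a repetition of Lemma~\ref{lem.norm.eq} for ii), closedness and compact resolvent for i) via relative boundedness, and for iii) the shift by $M_{\rm Tr}^2$ to make $T_{0,n}+M_{\rm Tr}^2$ $m$-accretive followed by~\cite[Thm.~IV.3.17]{kato}. The identification of the $M_{\rm Tr}^2$ shift and the use of the first resolvent identity to bound $\|T_{0,n}(T_{0,n}-\lambda)^{-1}\|$ are exactly right.

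There is, however, a concrete gap in your treatment of the cross term. You write that ``the only subtlety versus $\R^d$ is that $\langle -\DnDR f,f\rangle_n$ is no longer exactly $\|\nabla f\|_n^2$ when $a_n\ne 0$.'' That is not the only new boundary contribution: when you integrate by parts in the cross term $2\re\langle -\DnDR f, Q_0 f\rangle_n$ you also generate a second boundary integral $2\re\int_{\partial\Omega_n^{\rm R}}a_n\overline{Q_0}|f|^2\d\sigma$, and this one is qualitatively different because the unbounded potential $Q_0$ sits on the boundary. The standard $p=2$ trace estimate used to absorb the ``harmless'' Robin term does not directly apply; the paper instead applies the $p=1$ trace embedding~\eqref{emb.est.n} to the $W^{1,1}$ function $Q_0 f^2$, yielding the estimate~\eqref{bt.est} which involves $\|f\nabla Q_0\|_n$, $\|Q_0 f\|_n$, $\|\nabla f\|_n$, all subsequently controlled via~\eqref{ass.q.non-sec.rb} and~\eqref{nabla.n.est}. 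Moreover, before any of this can be written down, one must justify that $Q_0 f\in\dom(s_{0,n})$ so that the first representation theorem applies to $\langle S_{0,n}f, Q_0 f\rangle_n$; since $\dom(s_{0,n})$ is defined as the $W^{1,2}$-closure of $\DD_n$, this requires a mollification argument exploiting $Q_0\in W^{1,\infty}_{\rm loc}$ and the support separation from $\partial\Omega_n^{\rm D}$, which your sketch omits. Neither step is purely bookkeeping: the $p=1$ trace trick and the domain verification are where the uniformity in $n$ has to be earned, and an outline that declares the $\langle -\DnDR f,f\rangle_n$ boundary term the ``only subtlety'' would not assemble into a correct proof without recognizing them.
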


We mention that, formally, for $M_{\rm Tr}=0$ the set $\mathcal R(b'\!,M_{\rm Tr})$ and the resolvent estimate in Proposition \ref{prop.comp.res} iii) coincide with the set 
$\mathcal R(b')$ and the resolvent estimate in Proposition \ref{prop.T.def} iv). 

Before we prove Proposition~\ref{prop.comp.res}, we establish analogues of Lemmas~\ref{lem.tau0.def},~\ref{lem.W.Delta} for the approximating operators~$T_n$ 
where we need to account for the boundary~terms; here we omit the dependence of the constants on $M_{\rm Tr}$.

\begin{lemma}\label{lem.T0n.g.norm}
Let Assumptions~\ref{ass.q.non-sec},~\ref{ass.lam.n} be satisfied and let $T_{0,n}$ be as in \eqref{T0n.def.non.sec}. 
Then, for every $\varepsilon_3 >0$, there exists $C_3(\varepsilon_3) \geq 0$, independent of $n$, such that, 
for every $f\in \dom(T_{0,n})$,
\begin{equation}\label{T0n.norm.est.2}
\begin{aligned}
\|T_{0,n} f \|_n^2 
&\geq
(1-\varepsilon_3) \left(\|\Delta_n^{\rm DR} f\|_n^2+ \|Q_0 f\|_n^2  \right)  
-C_3(\varepsilon_3)
\|f\|_n^2. 
\end{aligned}
\end{equation}
\end{lemma}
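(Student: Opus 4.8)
The plan is to follow the same strategy as in the proof of Lemma~\ref{lem.tau0.def}, expanding $\|T_{0,n}f\|_n^2 = \|-\Delta_n^{\rm DR}f + Q_0 f\|_n^2$ and controlling the cross term, but now carefully tracking the boundary contributions coming from the fact that $\Delta_n^{\rm DR}$ acts on functions that need not vanish on $\partial\Omega_n^{\rm R}$. First I would write, for $f\in\dom(T_{0,n})$,
\begin{equation*}
\|T_{0,n}f\|_n^2 = \|\Delta_n^{\rm DR} f\|_n^2 + \|Q_0 f\|_n^2 + 2\re\langle -\Delta_n^{\rm DR} f, Q_0 f\rangle_n,
\end{equation*}
and then integrate by parts in the cross term. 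The key point is that for $f\in\dom(S_{0,n})$ the Green-type identity gives $\langle -\Delta_n^{\rm DR} f, Q_0 f\rangle_n = \langle \nabla f, \nabla(Q_0 f)\rangle_n + \int_{\partial\Omega_n^{\rm R}} a_n |f|^2 \overline{Q_0}\,\d\sigma$ (using the Robin boundary condition and that $Q_0\in W^{1,\infty}_{\rm loc}$ so $Q_0 f\in\dom(s_{0,n})$); expanding $\nabla(Q_0 f) = f\nabla Q_0 + Q_0\nabla f$ and using $\re Q_0\ge 0$ to discard the nonnegative term $2\re\langle\nabla f, Q_0\nabla f\rangle_n$, exactly as in Lemma~\ref{lem.tau0.def}.

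The main obstacle, and the genuinely new ingredient compared to Lemma~\ref{lem.tau0.def}, is the boundary term $2\re\int_{\partial\Omega_n^{\rm R}} a_n |f|^2 \overline{Q_0}\,\d\sigma$. Since $\partial\Omega_n^{\rm R}$ lies outside $\overline{B_{r_n+1}(0)}\supset B_R(0)$ (by the exhausting property \ref{ass.lam.n.ex}, $r_n>R$), $Q_0$ restricted there is only known to be locally $W^{1,\infty}$, so $|Q_0|$ on $\partial\Omega_n^{\rm R}$ is not uniformly bounded in $n$. The resolution is to absorb this into $\|Q_0 f\|_n^2$: estimate $|Q_0|$ on $\partial\Omega_n^{\rm R}$ crudely via $2|a_n||Q_0| \le \delta |Q_0|^2 + \delta^{-1}|a_n|^2$ pointwise on the boundary, giving a bound $\delta\int_{\partial\Omega_n^{\rm R}}|Q_0|^2|f|^2\d\sigma + \delta^{-1}\|a_n\|_\infty^2\int_{\partial\Omega_n}|f|^2\d\sigma$; the second piece is handled by the trace embedding \eqref{emb.est.n} together with \eqref{Tr.contr}, producing $\eps\|\nabla f\|_n^2 + C(\eps,M_{\rm Tr})\|f\|_n^2$. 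For the first piece one needs a trace-type bound controlling $\int_{\partial\Omega_n^{\rm R}}|Q_0|^2|f|^2\d\sigma$ — here one applies \eqref{emb.est.n} to the function $Q_0 f$ (which lies in $W^{1,2}(\Omega_n,\C)$ since $Q_0\in W^{1,\infty}_{\rm loc}$ and $f\in\dom(S_{0,n})\subset W^{1,2}$), using $\nabla(Q_0 f) = f\nabla Q_0 + Q_0\nabla f$ and the relative bound $|\nabla Q_0|^2 \le a_\nabla + b_\nabla|Q_0|^2$ from \ref{ass.q.non-sec.reg}; with $\eps$ chosen small this yields something of the form $\eps'(\|Q_0\nabla f\|_n^2 + \|Q_0 f\|_n^2 + \|\nabla f\|_n^2) + C\|f\|_n^2$, and the $\eps'\|Q_0\nabla f\|_n^2$ term must itself be absorbed.

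To close the argument cleanly it is cleanest to first establish, exactly as in \eqref{nab.est}, the interpolation $\|\nabla f\|_n^2 = \langle -\Delta_n^{\rm DR} f, f\rangle_n - \int_{\partial\Omega_n^{\rm R}} a_n|f|^2\d\sigma \le \|\Delta_n^{\rm DR} f\|_n\|f\|_n + M_{\rm Tr}(\eps\|\nabla f\|_n^2 + \eps^{-1}\|f\|_n^2)$, which after absorbing gives $\|\nabla f\|_n^2 \le \tfrac{\beta}{2}\|\Delta_n^{\rm DR} f\|_n^2 + C_\beta\|f\|_n^2$ with constants uniform in $n$; and similarly that $\|Q_0\nabla f\|_n$ and the boundary integral $\int_{\partial\Omega_n^{\rm R}}|Q_0|^2|f|^2\d\sigma$ are controlled by $\|\Delta_n^{\rm DR} f\|_n^2$, $\|Q_0 f\|_n^2$ and $\|f\|_n^2$ with arbitrarily small coefficients on the first two (this is where one iterates the trace estimate once more, or invokes that $Q_0 f\in\dom(s_{0,n})$ and $\|\nabla(Q_0 f)\|_n^2 \le \re s_{0,n}[Q_0 f] + M_{\rm Tr}\|Q_0 f\|_n^2$-type bounds). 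Collecting everything, choosing $\alpha,\beta,\gamma$ (the Young's-inequality parameters from the cross terms) and $\delta,\eps$ all sufficiently small in terms of the prescribed $\eps_3$ and the universal constants $a_\nabla,b_\nabla,M_{\rm Tr}$, one arrives at
\begin{equation*}
\|T_{0,n}f\|_n^2 \ge (1-\eps_3)\bigl(\|\Delta_n^{\rm DR} f\|_n^2 + \|Q_0 f\|_n^2\bigr) - C_3(\eps_3)\|f\|_n^2
\end{equation*}
with $C_3(\eps_3)$ independent of $n$, which is the assertion. The only subtlety requiring care is bookkeeping: every boundary term must ultimately be bounded using \eqref{emb.est.n} and $M_{\rm Tr}=\sup_n\|a_n\|_\infty K_n<\infty$ so that no hidden $n$-dependence survives.
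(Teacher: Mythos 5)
Your overall plan matches the paper's: expand $\|T_{0,n}f\|_n^2$, integrate by parts in the cross term, use $\re Q_0\geq 0$ to discard $2\re\langle\nabla f,Q_0\nabla f\rangle_n$, and identify the Robin boundary term $2\re\int_{\partial\Omega_n^{\rm R}} a_n\overline{Q_0}|f|^2\,\d\sigma$ as the new obstacle. You also correctly anticipate the auxiliary estimates \eqref{nabla.n.est} and \eqref{eq.fnablaQ0} and note that $Q_0 f\in\dom(s_{0,n})$ needs justification (the paper does this carefully via mollification).

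However, your treatment of the boundary term has a genuine gap. You first apply a pointwise Young inequality $2|a_n||Q_0|\leq\delta|Q_0|^2+\delta^{-1}|a_n|^2$, which raises the power of $Q_0$ on the boundary from one to two, and you then propose the $p=2$ trace embedding \eqref{emb.est.n} applied to $Q_0 f\in W^{1,2}(\Omega_n,\C)$ to control $\int_{\partial\Omega_n^{\rm R}}|Q_0|^2|f|^2\,\d\sigma$. This produces $\|\nabla(Q_0 f)\|_n^2$, which via the product rule contains $\|Q_0\nabla f\|_n^2$. That quantity is \emph{not} dominated by $\|\Delta_n^{\rm DR} f\|_n^2+\|Q_0 f\|_n^2+\|f\|_n^2$, no matter how the parameters are chosen: already for $Q_0(x)=|x|^2$, the integral $\int|Q_0|^2|\nabla f|^2$ would require control of $\int|Q_0|^4|f|^2$ after integrating by parts, which is unavailable. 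You acknowledge the term ``must itself be absorbed'' and suggest iterating the trace estimate or invoking $\|\nabla(Q_0 f)\|_n^2\lesssim \re s_{0,n}[Q_0 f]+\|Q_0 f\|_n^2$; but the former never touches an interior quantity and the latter merely restates the problem (you would still need to bound $\re s_{0,n}[Q_0 f]$). So the argument, as written, does not close.

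The paper sidesteps this by keeping $|Q_0|$ at power one: it applies the $p=1$, $\varepsilon=1$ trace embedding to the $W^{1,1}(\Omega_n,\C)$ function $Q_0 f^2$, \cf~\eqref{bt.est}. The crucial point is that $\nabla(Q_0 f^2)=f^2\nabla Q_0+2\,Q_0 f\,\nabla f$ yields, after Cauchy--Schwarz,
\begin{equation}
\bigg|2\!\int_{\partial\Omega_n^{\rm R}}\!\! a_n\overline{Q_0}|f|^2\,\d\sigma\bigg|
\leq 2M_{\rm Tr}\big(\|f\nabla Q_0\|_n\|f\|_n+\|Q_0 f\|_n(2\|\nabla f\|_n+\|f\|_n)\big),
\end{equation}
where only $\|Q_0 f\|_n$ (not $\|Q_0\nabla f\|_n$) appears. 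A Young inequality with small $\delta$ then gives terms of the form $\delta\|f\nabla Q_0\|_n^2+\delta\|Q_0 f\|_n^2$ plus lower-order contributions, all absorbable via \eqref{nabla.n.est} and \eqref{eq.fnablaQ0}. If you replace your pointwise split and $p=2$ trace step by this $p=1$ trace of $Q_0 f^2$, the rest of your bookkeeping goes through essentially as you describe.
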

\begin{proof}
We adapt the proof of Lemma \ref{lem.tau0.def}.
Let $\eps_3>0$. 
For $f\in\dom(T_{0,n})$,
\begin{equation}\label{eq.T0n-lm}
\|T_{0,n} f\|_n^2 = 
\|\DnDR f\|_n^2 + \|Q_0 f\|_n^2  
+2 \re \langle -\DnDR f , Q_0 f \rangle_n.
\end{equation}
Before we estimate the individual terms, we prove two estimates that are used later on.
First, for arbitrary $\alpha, \beta >0$, by  the trace embedding \eqref{emb.est.n} with 
$\sqrt \eps=\frac{\beta}{M_{\rm Tr}}$ and Assumption~\ref{ass.lam.n.a}, we obtain
\begin{equation}
\begin{aligned}
\|\nabla f\|_n^2 & = 
\langle  -\DnDR f, f \rangle_n - \int_{\partial \Omega_n^{\rm R}} a_n |f|^2 \d  \sigma
\\
&\leq  \alpha\|\DnDR f\|_n^2 + \frac{1}{4 \alpha} \|f\|_n^2 +  
\beta \|\nabla f\|_n ^2 + \frac{M_{\rm Tr}^2}{\beta} \|f\|_n^2;
\end{aligned}
\end{equation}
hence, for \vspace{-2mm} $\beta:=1/2$,
\begin{equation}\label{nabla.n.est}
\begin{aligned}
\|\nabla f\|_n^2 & 
\leq  
2{\alpha} \|\DnDR f\|_n^2 
+ 
\left(
\frac{1}{2\alpha} + 4 M_{\rm Tr}^2
\right)
\|f\|_n^2.  
\end{aligned}
\end{equation}
Secondly, Assumption~\ref{ass.q.non-sec.reg} implies
\begin{equation}\label{eq.fnablaQ0}
 \| f \nabla Q_0 \|_n^2 \leq a_{\nabla}\|f\|_n^2 + b_{\nabla}\|Q_0 f\|_n^2.
\end{equation}
Now we estimate the last term on the right-hand side of~\eqref{eq.T0n-lm}. 

First we verify that $Q_0f \in \dom (s_{0,n})$. 
Since $Q_0 \in W^{1,\infty}_{\rm \loc}(\Rd,\C)$,  \cf~Assumption~\ref{ass.q.non-sec.reg}, 
and $f \in \dom(T_{0,n}) \subset \dom(s_{0,n}) \subset \Wotlam$, \cf~\eqref{T0n.def.non.sec} and \eqref{Dom.s0n.encl}, we have $Q_0f \in \Wotlam$. 
Moreover, using the definition of $\dom(s_{0,n})$, \cf~\eqref{s0n.def}, we find $\{f_k\}_k \subset \CcRdC$ such that
\begin{align}
& \dist\, (\supp f_k, \partial \Omega^{\rm D}) \ := \!\!\! \inf_{x_1 \in \supp f_k \atop x_2 \in \partial \Omega^{\rm D}} \|x_1-x_2\| >0, \quad k\ \in \N, \label{supp.dist}
\\
& \|f_k \restriction \Omega_n - f\|_{\Wotlam} \to 0, \quad k \to \infty \label{W12.lim}.
\end{align}
Since $Q_0 \in W^{1,\infty}_{\rm \loc}(\Rd,\C)$, we have $\{Q_0 f_k\}_k \subset \WotRd$. 
Let $J_\eps$, $\eps>0$, be the standard mollifier, \cf~\cite[Par.\ 2.28]{Adams-2003}, and let $k\in \N$. Due to $\eqref{supp.dist}$ and properties of mollifiers, \cf~\cite[Par.\ 2.28,\ Lem.\ 3.16]{Adams-2003}, there exists $\eps_k >0$ such that
\begin{equation}
\begin{aligned}
&(J_{\eps} * Q_0 f_k) \restriction \Omega_n \in \DD_n, \quad 0 < \eps < \eps_k, 
\\
& \| (J_{\eps} * Q_0 f_k) \restriction \Omega_n - Q_0 f_k \restriction \Omega_n\| _{\Wotlam} \to  0, \quad \eps \searrow 0.
\end{aligned}
\end{equation}
Thus $\{Q_0 f_k \restriction \Omega_n\}_k \subset \dom(s_{0,n})$, hence $Q_0 f \in \dom(s_{0,n})$ by \eqref{W12.lim} and $Q_0 \in W^{1,\infty}_{\rm \loc}(\Rd,\C)$.

Now we continue the estimates. For every $f \in \dom(T_{0,n})$,
\begin{equation}\label{mixed.est}
\begin{aligned}
 2\re \langle -\DnDR f, Q_0 f \rangle_n 
&= 
2\re \langle \nabla f, f \nabla Q_0 \!+\! Q_0 \nabla f \rangle_n  
+ 
2\re \int_{\partial \Omega_n^{\rm R}} \!\!\!a_n \overline{Q_0} |f|^2 \d  \sigma
\\
&\geq 
2\re \langle \nabla f, f \nabla Q_0 \rangle_n  
+
2\re \int_{\partial \Omega_n^{\rm R}} a_n\overline{Q_0} |f|^2 \d  \sigma
\\
&\geq 
-\gamma  \| f \nabla Q_0 \|_n^2 -\frac{1}{\gamma}\|\nabla f\|_n^2-\bigg|
2\int_{\partial \Omega_n^{\rm R}} a_n\overline{Q_0} |f|^2 \d  \sigma
\bigg| 
\end{aligned}
\end{equation}
for arbitrary $\gamma>0$.
We have ${Q_0} f^2 \in W^{1,1}{(\Omega_n,\C)}$, hence the trace embedding~\eqref{emb.est.n} with $p=1$, $\varepsilon=1$, and Assumption~\ref{ass.lam.n.a}  yield
\begin{equation}\label{bt.est}
\begin{aligned}
\bigg|
2\int_{\partial \Omega_n^{\rm R}} \!\!a_n \overline{Q_0} |f|^2 \d  \sigma
\bigg| 
& \leq
2\|a_n\|_{\infty} 
\int_{\partial \Omega_n^{\rm R}}
\big|{Q_0}f^2\big| \d  \sigma
\\
&
\leq 
2\|a_n\|_{\infty} K_n 
\int_{\Omega_n} 
\Big(
\left|
\nabla (Q_0f^2)
\right| 
+\big|Q_0f^2\big| 
\Big)
\d x
\\
&\leq
2 M_{\rm Tr} \Big(\|f\nabla Q_0\|_n \|f\|_n+\|Q_0 f\|_n(2 \|\nabla f\|_n+\|f\|_n) \Big)
\\
& \leq
\delta \| f \nabla Q_0 \|_n^2 + 
2 \delta \|Q_0 f\|_n^2  + \frac{2 M_{\rm Tr}^2}{\delta} (2 \| \nabla f\|_n^2 + \|f\|_n^2)
\end{aligned}
\end{equation}
where 
$\delta>0$ is arbitrary. 
Using \eqref{bt.est} and~\eqref{nabla.n.est},~\eqref{eq.fnablaQ0} to  estimate \eqref{mixed.est}, and by choosing $\gamma$, $\delta$ and then $\alpha$ sufficiently small, we arrive at
\begin{equation} \label{august1}
 2\re \langle -\DnDR f, Q_0 f \rangle_n
\geq 
\eps_3 \left(\|\Delta_n^{\rm DR} f\|_n^2+ \|Q_0 f\|_n^2  \right)  
-C_3(\varepsilon_3)
\|f\|_n^2
\end{equation}
for some $C_3(\eps_3)\geq 0$, so the claim follows from~\eqref{eq.T0n-lm}.
\end{proof}

\begin{lemma}\label{lem.T0n.W}
Let Assumptions~\ref{ass.q.non-sec},~\ref{ass.lam.n} be satisfied and let $T_{0,n}$ be as in \eqref{T0n.def.non.sec}. 
Then, for every $\varepsilon_4 \!>\!0$, there exists $C_4(\varepsilon_4) \!\geq\! 0$, independent of $n$, such that, for every \vspace{-1mm} $f \!\in\! \dom(T_{0,n})$,
\begin{equation}
\begin{aligned}
\|Wf\|_n^2 &\leq 
(b_W + \varepsilon_4) \|\DnDR f\|_n^2 + C_4 (\varepsilon_4) \|f\|_n^2.
\end{aligned}
\end{equation}
\end{lemma}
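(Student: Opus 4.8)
The goal is to transfer the $\Delta$-boundedness of $W$ inside $B_R(0)$ (Assumption~\ref{ass.q.non-sec.W}) and the $L^\infty$-boundedness of $W$ outside $B_r(0)$ (Assumption~\ref{ass.q.non-sec.W.2}) into a $\DnDR$-bound on all of $\Omega_n$, uniformly in $n$. This is the exact analogue of Lemma~\ref{lem.W.Delta}, the only new feature being that we must control $\|\Delta(\eta f)\|_n$ in terms of $\|\DnDR f\|_n$ rather than $\|\Delta f\|$, and this must be done with $n$-independent constants. I would proceed exactly as in the proof of Lemma~\ref{lem.W.Delta}: fix the cutoff $\eta\in C_0^\infty(B_R(0),\R)$ with $0\le\eta\le1$ and $\eta\restriction B_r(0)=1$ (this is fixed once and for all, so $\|\Delta\eta\|_\infty$, $\|\nabla\eta\|_\infty$ are absolute constants independent of $n$), and split $Wf = W\eta f + W(1-\eta)f$. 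Since $r_1>R$ by Assumption~\ref{ass.lam.n.ex}, we have $\overline{B_R(0)}\subset\Omega_n$ for all $n$, so $\eta f\in W^{2,2}(B_R(0),\C)\cap W^{1,2}_0(B_R(0),\C)$ whenever $f\in\dom(T_{0,n})$; here one uses that $\dom(T_{0,n})=\dom(S_{0,n})\subset\WotRd$-type regularity and that multiplication by the smooth compactly supported $\eta$ lands one in the Dirichlet Sobolev space on $B_R(0)$ (and that $W^{2,2}$-regularity of $f$ near $\partial\Omega_n$ is irrelevant because $\eta$ vanishes there). On the support of $1-\eta$, which lies outside $B_r(0)$, Assumption~\ref{ass.q.non-sec.W.2} gives $\|W(1-\eta)f\|_n\le M_W\|f\|_n$.

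For the main term, Assumption~\ref{ass.q.non-sec.W} gives
\begin{equation}
\|W\eta f\|_n^2 \le a_W\|\eta f\|_n^2 + b_W\|\Delta(\eta f)\|_n^2,
\end{equation}
and then the Leibniz rule $\Delta(\eta f) = (\Delta\eta)f + 2\nabla\eta\cdot\nabla f + \eta\Delta f$ together with $\|\Delta(\eta f)\|_n^2 \le (1+\epsilon')\|\eta\Delta f\|_n^2 + C(\epsilon')(\|\Delta\eta\|_\infty^2 + \|\nabla\eta\|_\infty^2)(\|f\|_n^2+\|\nabla f\|_n^2)$ for arbitrarily small $\epsilon'>0$. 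On $\supp\eta\subset B_R(0)$ the distributional Laplacian of $f$ coincides with $\DnDR f$ (no boundary contribution enters since $\eta$ is supported away from $\partial\Omega_n$), so $\|\eta\Delta f\|_n \le \|\DnDR f\|_n$. Finally one absorbs $\|\nabla f\|_n^2$ using the $n$-uniform estimate \eqref{nabla.n.est} from the proof of Lemma~\ref{lem.T0n.g.norm}, namely $\|\nabla f\|_n^2 \le 2\alpha\|\DnDR f\|_n^2 + (\frac{1}{2\alpha}+4M_{\rm Tr}^2)\|f\|_n^2$ with $\alpha$ chosen small. Combining, and choosing $\epsilon'$ and $\alpha$ small enough in terms of the given $\epsilon_4$, yields $\|Wf\|_n^2 \le (b_W+\epsilon_4)\|\DnDR f\|_n^2 + C_4(\epsilon_4)\|f\|_n^2$ with $C_4$ depending only on $b_W$, $a_W$, $M_W$, $\eta$, $M_{\rm Tr}$, hence independent of $n$.

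The main obstacle—really the only point requiring care—is the uniformity of the constants in $n$. Two facts make this work: the cutoff $\eta$ is chosen independently of $n$ (possible precisely because all $\Omega_n$ contain a fixed ball $B_{r_1}(0)\supset B_R(0)$), so all the constants arising from differentiating $\eta$ are absolute; and the gradient-absorption inequality \eqref{nabla.n.est} is already $n$-uniform (its constants depend on $n$ only through $M_{\rm Tr}$, which is uniformly bounded by Assumption~\ref{ass.lam.n.a}). A subsidiary technical point is justifying $\eta f\in W^{2,2}(B_R(0),\C)\cap W^{1,2}_0(B_R(0),\C)$ for $f\in\dom(T_{0,n})$; as in Lemma~\ref{lem.W.Delta} this is most cleanly checked on the core $\CcRdC\cap\DD_n$ of $S_{0,n}$ (where it is obvious) and then extended to all of $\dom(T_{0,n})$ by the norm equivalence \eqref{Tn.graph.norm}—or, more elementarily, by first proving the stated inequality on that core and then passing to the closure, since both sides are continuous in the graph norm of $T_{0,n}$.
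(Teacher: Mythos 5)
Your main argument follows the same route as the paper's: split $Wf = W\eta f + W(1-\eta)f$ with the same $n$-independent cutoff, bound the exterior piece by $M_W\|f\|_n$, apply Assumption \ref{ass.q.non-sec.W} to the interior piece, expand $\Delta(\eta f)$ by Leibniz, and absorb $\|\nabla f\|_n^2$ via the uniform estimate \eqref{nabla.n.est}. The one place where you and the paper diverge is in \emph{how} you justify the Leibniz identity and the membership $\eta f\in W^{2,2}(B_R(0),\C)\cap W_0^{1,2}(B_R(0),\C)$. The paper does this by writing out $s_{0,n}(\eta f,\psi)$ for $\psi\in\DD_n$, integrating by parts, and invoking the first representation theorem to conclude $\eta f\in\dom(-\DnDR)$ together with $\DnDR(\eta f)=f\Delta\eta+2\nabla\eta\cdot\nabla f+\eta\DnDR f$; only then is it clear that the right-hand side lies in $L^2$ and the estimate of Lemma~\ref{lem.W.Delta} can be imported. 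You instead appeal to the fact that the distributional Laplacian of $f$ agrees with $\DnDR f$ on the interior. That identification is correct, but the step from there to $\eta f\in W^{2,2}(B_R(0),\C)$ requires interior elliptic regularity, which you never name; "$\dom(S_{0,n})\subset W^{1,2}$-type regularity and multiplication by $\eta$" is not enough to jump from $W^{1,2}$ to $W^{2,2}$.

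More seriously, your proposed fallback in the last paragraph does not work as stated. First, $\CcRdC\cap\DD_n$ is a form core of $s_{0,n}$, but it is \emph{not} in general an operator core of $S_{0,n}$: smooth restrictions from $\R^d$ need not satisfy the Robin condition on $\partial\Omega_n^{\rm R}$, so they need not even lie in $\dom(S_{0,n})$. Second, invoking the norm equivalence \eqref{Tn.graph.norm} here is circular, since Proposition~\ref{prop.comp.res}~ii) is proved in the paper \emph{using} Lemma~\ref{lem.T0n.W}. Your primary argument can be repaired by explicitly invoking interior elliptic regularity (as $\eta f$ has compact support in $\Omega_n$ and $\Delta(\eta f)\in L^2$, so $\eta f\in W^{2,2}(B_R(0),\C)\cap W_0^{1,2}(B_R(0),\C)$), or, as in the paper, by routing through the first representation theorem; but the core-and-closure alternative should be dropped.
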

\begin{proof}
Let $\eta \in C_0^{\infty}(B_R(0),\R)$ be as in the proof of Lemma \ref{lem.W.Delta}. 
For  $f \in \dom(T_{0,n}) \subset \dom(s_{0,n}) \subset \Wotlam$, we have $\eta f\in W_0^{1,2}(B_R(0),\C)\subset\dom(s_{0,n})$ since $B_R(0)\!\subset\!\Omega_n$, 
\cf~\eqref{Dom.s0n.encl}. Let $\psi \in \DD_n$, \cf~\eqref{Dn.def}. 
Then $\eta\psi\in C_0^{\infty}(B_R(0), \C)$ and, integrating by parts, we can verify that
%
\begin{equation}
s_{0,n}(\eta f,\psi)
=
\langle -2 \nabla \eta . \nabla f - f \Delta \eta- \eta \DnDR f, \psi \rangle_n.
\end{equation}
Since $\DD_n$ is a core of $s_{0,n}$, 
 the first representation theorem~\cite[Thm.\ VI.2.1]{kato} implies that $\eta f\in\dom(-\DnDR)$ and  $\DnDR (\eta f)=f\Delta\eta + 2\nabla\eta . \nabla f + \eta\DnDR f$.
With the help of \eqref{nabla.n.est} instead of \eqref{nab.est}, the proof can be finished in the same way as the proof of Lemma \ref{lem.W.Delta}.
\end{proof}
\begin{proof}[Proof of Proposition {\rm\ref{prop.comp.res}}]
i)
Since $Q_0,$ $U \in L^{\infty}_{\rm loc}(\Rd,\C)$ by Assumption~\ref{ass.q.non-sec.reg}, and $W$ is 
$S_{0,n}$-bounded with  relative bound smaller than 1 by~Lemma \ref{lem.T0n.W},   
the operator $T_n$ is closed, \cf~\cite[Thm.\ IV.1.1]{kato}. 
Moreover, by \cite[Thm.\ IV.1.16]{kato}, it has compact resolvent since $S_{0,n}$ is $m$-sectorial with compact resolvent, \cf~Lemma \ref{lem.s0.def}. 

ii)
The equivalence of the norms 
can be proved by a straightforward adaptation of the arguments in the proof of Lemma \ref{lem.norm.eq}; note that Lemma \ref{lem.T0n.W} is used instead of Lemma \ref{lem.W.Delta}. 

iii)
By the trace embedding \eqref{emb.est.n} and Assumption~\ref{ass.lam.n.a}, we have
\begin{equation}
\begin{aligned}
\re \langle  T_{0,n} f, f \rangle_n 
&
\geq 
(1-M_{\rm Tr} \sqrt \eps ) \|\nabla f\|_n^2 - \frac{M_{\rm Tr}}{\sqrt \eps} \|f\|_n^2
= 
-M_{\rm Tr}^2 \|f\|_n^2, 
\end{aligned}
\end{equation}
where we have chosen $\sqrt \eps = 1 /M_{\rm Tr}$ in the last step. 
Hence the numerical range of $T_{0,n}$ lies in the half-plane $\{\lm\in\C: \re \lambda\geq -M_{\rm Tr}^2\}$ and so does the spectrum since $T_{0,n}$ has compact resolvent, \cf~\cite[Thm.~V.3.2]{kato}; 
moreover, $\|(T_{0,n} -\lm)^{-1}\|\leq |\re \lm+M_{\rm Tr}^2|^{-1}$ if $\re \lm <-M_{\rm Tr}^2$. 

Since $b_U<1$, the claims in \eqref{hyperbolaset2}--\eqref{Tn.res.bound} are now obtained by an argument based on \cite[Thm.\ IV.3.17]{kato}, similarly as in the proof of Proposition \ref{prop.T.def}; here Lemmas \ref{lem.T0n.g.norm}, \ref{lem.T0n.W} are used, and $a_{W\!-U}(b'\!,M_{\rm Tr})$ is the constant in the relative boundedness inequality of $W-U$ with respect to $T_{0,n}$, in analogy to \eqref{W-U}.
\end{proof}

\section{Convergence of $T_n$ to $T$}
\label{sec.conv}

In this section, we prove that the operators $T_n$ converge to $T$ in generalized norm resolvent sense, \cf~Theorem \ref{thm.norm.res.conv}.
The proof relies on two ingredients.

First, in Lemma \ref{lem.s.res.conv}, we show generalized \emph{strong} resolvent convergence of $T_n$ to~$T$. Here, for semi-angle $\theta \geq \pi/2$, we employ the so-called common core property of approximations, \cf~\cite[Thm.\ 1]{Weidmann-1997-34}.  
For semi-angle $\theta < \pi/2$, where it is not even guaranteed that $\CcRdC \subset \dom(T)$, we use form techniques inspired by the approach in \cite[Prop.\ 5.4]{Krejcirik-2010-94} for a selfadjoint Laplacian in twisted tubes.  

Secondly, in Lemma \ref{lem.discr.comp}, we establish discrete compactness, \cf~\cite[Def.\ 3.1.(k)]{stummel1}, of the sequence of \vspace{-1mm} embeddings 
\begin{equation}\label{dis.comp}
\left(\dom(T_n),\left(\|T_n\cdot\|_n^2 + \|\cdot\|_n^2\right)^\frac 12 \right) \hookrightarrow L^2(\Omega_n,\C), \quad n \in \N.
\end{equation}


\begin{theorem}\label{thm.norm.res.conv}
Let Assumption~\ref{ass.lam.n} be satisfied and assume that
\begin{enumerate}[\upshape I.]
\item in the case of semi-angle $\theta < \pi/2$, Assumption~\ref{ass.q.sec} holds and $T$, $T_n,$ $n\in \N$, are the operators defined in Propositions~{\rm\ref{prop.T.sec.def},~\ref{prop.Tn.def.sec}}, respectively;
\item in the case of semi-angle $\theta \geq \pi/2$, Assumption~\ref{ass.q.non-sec} holds with $\bU\!<\!1$ and  $T$,~$T_n,$ $n\!\in\!\N$, are the operators defined in Propositions~{\rm\ref{prop.T.def},~\ref{prop.comp.res}}, respectively. 
\end{enumerate}	
Then, for every $\lm\!\in\!\rho(T)$, 
there exists $n_\lm \!\in\! \N$ such that, for all $n\geq n_{\lm}$, 
$\lm \!\in\! \rho(T_n)$~and
\begin{equation}\label{norm.conv}
\left\|
(T_n-\lm)^{-1}\chi_{\Omega_n}- (T-\lm )^{-1}
\right\| 
\to 0, \quad n \to \infty.
\end{equation}
\end{theorem}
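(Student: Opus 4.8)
The plan is to deduce generalized norm resolvent convergence from two separate inputs, following the classical scheme of Stummel and Osborn: (i) generalized \emph{strong} resolvent convergence $(T_n-\lm)^{-1}\chi_{\Omega_n} \to (T-\lm)^{-1}$ strongly, and (ii) discrete compactness of the sequence of graph-norm embeddings \eqref{dis.comp}. These are exactly the two lemmas announced in the section preamble (Lemma~\ref{lem.s.res.conv} and Lemma~\ref{lem.discr.comp}). The point is that strong convergence on its own only gives spectral inclusion, but combined with a compactness property of the resolvents it is upgraded to convergence in operator norm; this is the mechanism behind \cite[Thm.\ 9.24, 9.26]{weid1} in the selfadjoint case, adapted here to the non-selfadjoint quasi-sectorial setting via the collectively/discretely compact approximation theory of \cite{stummel1,anselonepalmer,Osborn-1975-29}.

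First I would fix $\lm\in\rho(T)$ and show $\lm\in\rho(T_n)$ for all large $n$. In Case~II this is immediate from Proposition~\ref{prop.comp.res}\,iii): the sector $\mathcal R(b',M_{\rm Tr})$ lies in $\rho(T_n)$ for \emph{all} $n$ with a uniform resolvent bound, so for $\lm$ in that sector there is nothing to do, and for general $\lm\in\rho(T)$ one first establishes \eqref{norm.conv} for a single $\lm_0$ in the sector and then propagates to all of $\rho(T)$ by the standard Neumann-series argument (norm resolvent convergence at one point of a connected resolvent set yields it on the whole connected component; see \cite[Sec.\ IV.3.1]{kato}). In Case~I the uniform quasi-sectoriality from Proposition~\ref{prop.Tn.def.sec}\,ii) plays the same role: $\C\setminus\mathcal S(M_{\rm Tr})\subset\rho(T_n)$ for all $n$, giving a uniform resolvent bound off the common sector, and the same propagation argument applies. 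So it suffices to prove \eqref{norm.conv} for one convenient $\lm$ (real, very negative).

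For the strong convergence (Lemma~\ref{lem.s.res.conv}), in Case~II the natural route is the common core property of \cite[Thm.\ 1]{Weidmann-1997-34}: $\CcRdC$ is a core for $T$ by Proposition~\ref{prop.T.def}, and for $\psi\in\CcRdC$ one has $\psi\restriction\Omega_n\in\dom(T_n)$ once $\supp\psi\subset B_{r_n+1}(0)\subset\Omega_n$ (using Assumption~\ref{ass.lam.n.ex}), with $T_n(\psi\restriction\Omega_n)=(T\psi)\restriction\Omega_n$ eventually; combined with the uniform resolvent bounds this gives $(T_n-\lm)^{-1}\chi_{\Omega_n}g\to(T-\lm)^{-1}g$ for $g$ in a dense set, hence for all $g$. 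In Case~I, where $\CcRdC$ need not even sit in $\dom(T)$, one instead argues at the level of forms, mimicking \cite[Prop.\ 5.4]{Krejcirik-2010-94}: write $u_n:=(T_n-\lm)^{-1}\chi_{\Omega_n}g$ and $u:=(T-\lm)^{-1}g$, test the resolvent equations against suitable comparison functions, use the uniform form-coercivity \eqref{eq.retn.vs.nablaf} to get uniform bounds on $\|\nabla u_n\|$ and $\re q_0[u_n]$, extract a weakly convergent subsequence, and identify the limit as $u$ using a cutoff argument that exploits $|Q_0(x)|\to\infty$ to kill the tails; uniqueness of the limit then gives convergence of the full sequence. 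For the discrete compactness (Lemma~\ref{lem.discr.comp}), the key is that the uniform graph-norm equivalences --- \eqref{Tn.graph.norm} in Case~II, and the analogue following from \eqref{eq.retn.vs.nablaf} together with \eqref{w.est} in Case~I --- control $\|\nabla u_n\|_n$ and the potential term $\|Q_0 u_n\|_n$ (resp.\ $\re q_0[u_n]$) \emph{uniformly in $n$}; a diagonal/Rellich argument (local $W^{1,2}$-compactness on each fixed ball, plus the tail estimate coming from $|Q_0|\to\infty$ and $r_n\to\infty$) then yields that any bounded sequence $f_n\in\dom(T_n)$ has an $L^2(\Omega_n,\C)$-convergent subsequence, which is precisely discrete compactness in the sense of \cite[Def.\ 3.1(k)]{stummel1}.

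The main obstacle I expect is \textbf{controlling the tails uniformly}: the approximating operators $T_n$ genuinely ``forget'' the potential outside $\Omega_n$, so one must leverage $|Q_0(x)|\to\infty$ to show that the mass of $u_n$ (and of any graph-bounded sequence) escaping to $|x|>r$ is small \emph{uniformly in $n$}. Concretely, using a cutoff $\phi_r$ supported in $|x|>r/2$, one tests against $\phi_r^2 u_n$ and estimates $\int_{|x|>r}|Q_0||u_n|^2$ from below by $(\inf_{|x|>r}|Q_0|)\|\phi_r u_n\|_n^2$ while bounding the cross terms $\langle\nabla u_n,(\nabla\phi_r^2)u_n\rangle$ and the boundary contributions on $\partial\Omega_n^{\rm R}$ (which are absent once $r<r_n$, since $B_{r_n+1}(0)\subset\Omega_n$, so in fact the boundary terms disappear for $r$ fixed and $n$ large --- this is why Assumption~\ref{ass.lam.n.ex} is essential). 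Making this tail bound clean in the non-sectorial Case~I, where only $\re Q_0\ge c_0$ is available and the imaginary part is large, requires care with the sectoriality estimate \eqref{sec-t0}, but it goes through; once the uniform tail estimate is in hand, both the strong convergence and the discrete compactness follow by localizing to a fixed ball and invoking ordinary Rellich compactness there.
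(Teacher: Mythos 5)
Your overall plan matches the paper's: prove generalized \emph{strong} resolvent convergence at a convenient $\lm_0$ (Lemma~\ref{lem.s.res.conv}), establish discrete compactness of the graph-norm embeddings (Lemma~\ref{lem.discr.comp}), upgrade to norm convergence via the collectively-compact approximation framework of \cite{anselonepalmer}, and propagate from $\lm_0$ to all of $\rho(T)$ by a first-resolvent-identity / Neumann-series argument. The sketches of the Case I form-based argument (mimicking \cite{Krejcirik-2010-94}) and the Case II common-core argument (via \cite{Weidmann-1997-34}) for the strong convergence, as well as the tail-control-plus-Rellich scheme for discrete compactness, all correspond to what the paper does.

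There is, however, a genuine gap in the upgrade step. You assert that ``strong convergence on its own only gives spectral inclusion, but combined with a compactness property of the resolvents it is upgraded to convergence in operator norm.'' This is false as stated in the non-selfadjoint setting: take $K_n = e_n\otimes e_1$ in a separable Hilbert space with orthonormal basis $\{e_k\}$; then $K_n\to 0$ strongly, each $K_n$ is rank one, and $\{K_n\}$ is collectively compact, yet $\|K_n\|=1$ for all $n$. The theorem the paper actually invokes, \cite[Thm.\ 3.4]{anselonepalmer}, requires in addition strong convergence of the \emph{adjoint} resolvents $(T_n^*-\overline{\lm_0})^{-1}\chi_{\Omega_n}\to (T^*-\overline{\lm_0})^{-1}$ together with collective compactness of both families. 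Establishing this for the adjoints is a substantive and nontrivial step in the paper's proof: one must check that Assumptions~\ref{ass.q.sec}, \ref{ass.q.non-sec}, \ref{ass.lam.n} remain valid under complex conjugation of $q$, $Q$, $a_n$, $W$, and under passing to the adjoint form $w^*$, so that the operators $\widehat T$, $\widehat T_n$ built from the conjugated data coincide with $T^*$, $T_n^*$ (by \cite[Thm.\ VI.2.5]{kato} in Case~I and \cite[Thm.\ VII.2.5,\ 2.6,\ Cor.~2.7]{edmundsevans}, \cite[Cor.\ 1]{Hess-1970-45} in Case~II), and then Lemmas~\ref{lem.s.res.conv}, \ref{lem.discr.comp} apply to $T_n^*$ verbatim. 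In the selfadjoint case the adjoint condition is automatic, which is probably the source of the oversight; here it is precisely the extra ingredient that distinguishes the non-selfadjoint argument and must be stated and verified explicitly.
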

%

\begin{rem}\label{rem.loc.unif}
The generalized norm resolvent convergence in \eqref{norm.conv} is even locally uniform, \ie~for all $\lm\in \rho(T)$, there exist $r_{\lm}>0$ and $n_\lm \in \N$ such that $B_{r_\lm}(\lm) \subset \bigcap_{n \geq n_\lm}\rho(T_n) \cap \rho(T)$ and the convergence is uniform in $B_{r_\lm}(\lm)$.

To see the latter, let $n_{\lm}\in\N$ be so large that 
$\|(T_n-\lm)^{-1}\chi_{\Omega_n} - (T-\lm)^{-1}\|\leq 2 \|(T-\lm)^{-1}\|$ for all $n\geq n_\lm$. If we choose $r_\lm:=\|(T-\lm)^{-1}\|^{-1}/4$, then a Neumann series argument yields that, for every $\mu\in B_{r_{\lm}}(\lm)$, the resolvents $(T-\mu)^{-1}$, $(T_n-\mu)^{-1}$, $n\geq n_{\lm}$, exist and are uniformly bounded (in $n$ and $\mu$). Then the uniform convergence of the resolvents follows from \eqref{eq.normconv.lm} below with $\lm_0, \lm$ replaced by $\lm, \mu$.
\end{rem}	

To prove Theorem \ref{thm.norm.res.conv}, we first show the two lemmas described above. 

\begin{lemma}\label{lem.s.res.conv}
Let the assumptions of Theorem~{\upshape\ref{thm.norm.res.conv}} be satisfied. 
Then there exists $\gamma >0$ such that $(-\infty,-\gamma) \subset \bigcap_n \rho(T_n) \cap \rho(T) \neq \emptyset$
and for all $\lm_0 \in (-\infty,-\gamma)$ and for every $f \in \LRd$,
\begin{equation}
\left\|
\left(
(T_n-\lm_0)^{-1} \chi_{\Omega_n}-(T-\lm_0)^{-1}
\right)
f
\right\| \to 0, \quad n \to \infty.
\end{equation}
\end{lemma}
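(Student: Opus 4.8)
The plan is to establish generalized strong resolvent convergence of $T_n$ to $T$, treating the two cases (semi-angle $\theta<\pi/2$ and $\theta\geq\pi/2$) separately but with a common structure. First I would choose $\gamma>0$ large enough that $(-\infty,-\gamma)$ lies in $\rho(T)$ and in $\rho(T_n)$ for all $n$; this is possible by the uniform resolvent bounds from Propositions~\ref{prop.T.sec.def}/\ref{prop.Tn.def.sec} in Case~I (using the uniform sector $\mathcal S(M_{\rm Tr})$) and Propositions~\ref{prop.T.def}/\ref{prop.comp.res} in Case~II (using the uniform sector $\mathcal R(b',M_{\rm Tr})$, which contains a ray $(-\infty,-\gamma)$ on the negative real axis). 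Fix $\lm_0\in(-\infty,-\gamma)$ and $f\in\LRd$; write $u:=(T-\lm_0)^{-1}f$ and $u_n:=(T_n-\lm_0)^{-1}(f\restriction\Omega_n)=(T_n-\lm_0)^{-1}\chi_{\Omega_n}f$. The goal is $\|u_n-u\|\to0$, where $u_n$ is viewed in $\LRd$ by extension by zero.

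In Case~II, I would use the common core property following \cite[Thm.\ 1]{Weidmann-1997-34}: since $\CcRdC$ is a core of $T$ and, for $\phi\in\CcRdC$, eventually $\supp\phi\subset\Omega_n$ (by the exhausting property \ref{ass.lam.n.ex}) so that $\phi\restriction\Omega_n\in\dom(T_n)$ with $T_n(\phi\restriction\Omega_n)=(T\phi)\restriction\Omega_n$, it suffices to check that $\|(T_n-\lm_0)(\phi\restriction\Omega_n)\|_n\to\|(T-\lm_0)\phi\|$ together with the uniform bound $\sup_n\|(T_n-\lm_0)^{-1}\|<\infty$; both hold, the former because the expressions literally coincide for large $n$. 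This yields $(T_n-\lm_0)^{-1}\chi_{\Omega_n}g\to(T-\lm_0)^{-1}g$ for $g$ in the dense set $(T-\lm_0)\CcRdC$, and a standard $3\varepsilon$-argument using uniform boundedness of the resolvents extends it to all $f\in\LRd$.

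In Case~I, where it need not even hold that $\CcRdC\subset\dom(T)$, I would instead argue with the sectorial forms, mimicking \cite[Prop.\ 5.4]{Krejcirik-2010-94}. Set $v:=u-u_n\in\WotRd$ (after extending $u_n$ by zero; note $u_n\in\dom(s_{0,n})\subset\Wotlam$ and its zero extension lies in $\WotRd$ because $\overline{\DD_n}^{\|\cdot\|}$ controls the boundary values on $\partial\Omega_n^{\rm D}$, while on $\partial\Omega_n^{\rm R}$ the Robin form is $\|\nabla\cdot\|$-form-bounded). The key identity is $t[v,v]-\lm_0\|v\|^2 = t[u,v]-\lm_0\langle u,v\rangle - \big(t_n[u_n,v]-\lm_0\langle u_n,v\rangle_n\big) + \big(t_n[u_n,v]-\lm_0\langle u_n,v\rangle_n - t[u_n,v]+\lm_0\langle u_n,v\rangle\big)$; the first bracket is $\langle f,v\rangle$, the second is $\langle f\restriction\Omega_n,v\restriction\Omega_n\rangle_n=\langle \chi_{\Omega_n}f,v\rangle$, and these differ by $\langle(1-\chi_{\Omega_n})f,v\rangle\to0$. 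The remaining bracket measures the discrepancy between the forms $t$ and $t_n$ applied to $u_n$ (which is supported in $\Omega_n$): on $\Omega_n$ the bulk terms $\|\nabla\cdot\|^2+q_0+w$ agree, so only the Robin boundary term $\int_{\partial\Omega_n^{\rm R}}a_n u_n\bar v\,\rd\sigma$ survives, which one controls via the trace embedding \eqref{emb.est.n} with small $\varepsilon$ and the uniform $\Wotlam$-bound on $u_n$ (coming from the uniform sectoriality \eqref{eq.retn.vs.nablaf} and $\|u_n\|_n\leq\|(T_n-\lm_0)^{-1}\|\,\|f\|$). Coercivity of $\re(t-\lm_0)$ on $\WotRd$ for $\lm_0$ sufficiently negative then gives $\|v\|^2\lesssim\re(t[v,v]-\lm_0\|v\|^2)$, and combining the above shows $\|v\|\to0$; one closes the argument by first proving it for $f$ in a dense set on which the $\Wotlam$-norms of $u_n$ stay bounded and then passing to general $f$ by uniform boundedness.

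The main obstacle is the boundary-term estimate in Case~I: one must extract smallness of the Robin contribution $\int_{\partial\Omega_n^{\rm R}}a_n u_n\bar v\,\rd\sigma$ even though $a_n$ is only uniformly (not smallly) bounded and $\partial\Omega_n$ recedes to infinity. The resolution is that the trace inequality \eqref{emb.est.n} carries a free parameter $\varepsilon$, so the boundary term is bounded by $M_{\rm Tr}\big(\varepsilon^{1/2}\|\nabla u_n\|_n+\varepsilon^{-1/2}\|u_n\|_n\big)\big(\varepsilon^{1/2}\|\nabla v\|+\varepsilon^{-1/2}\|v\|\big)$; absorbing the $\|\nabla v\|$ pieces into the coercivity estimate and using that $\|u_n\|_n\to0$ (which is itself part of what we prove, handled by a bootstrap: first establish weak convergence $u_n\rightharpoonup u$ via uniform $\WotRd$-bounds, identify the limit as $u$ using the form equation, then upgrade to norm convergence using the discrete compactness of the embeddings \eqref{dis.comp} from Lemma~\ref{lem.discr.comp}) makes the whole expression vanish. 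Care is also needed that the zero-extension of $u_n$ genuinely lies in $\WotRd$ when $\partial\Omega_n^{\rm R}\neq\emptyset$ — here one uses that $u_n\in\dom(s_{0,n})$ together with the Lipschitz regularity \ref{ass.lam.n.div} so that the trace makes sense, and that the extension by zero across a Lipschitz interface of an $W^{1,2}$ function with the correct (Dirichlet) trace on $\partial\Omega_n^{\rm D}$ stays in $W^{1,2}(\R^d)$ since there is no jump; on the Robin part the function need not vanish, but the zero extension is still $W^{1,2}$ only if we are careful — in fact what one actually needs is not that $u_n$ itself extends, but that the form identity above makes sense, which it does since $t[\cdot]$ is only evaluated at $v=u-u_n$ with $u\in\dom(t)\subset\WotRd$ and $u_n$ treated as an element of $\Wotlam$ paired against $v\restriction\Omega_n$.
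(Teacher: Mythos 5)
Your Case~II argument matches the paper's proof exactly (common core argument following \cite{Weidmann-1997-34}), so there is nothing to add there.

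Your Case~I argument, however, has a genuine gap that your final paragraph acknowledges but does not actually resolve. You need to evaluate $t[v,v]$ with $v = u - u_n$, which requires $v \in \dom(t) = \WotRd \cap \dom(q_0)$ and hence that the zero extension of $u_n$ lies in $\WotRd$. For Lipschitz $\Omega_n$, the zero extension of a $W^{1,2}(\Omega_n)$ function lies in $W^{1,2}(\Rd)$ if and only if its trace on $\partial\Omega_n$ vanishes a.e.; the definition of $\dom(s_{0,n})$ controls only the trace on $\partial\Omega_n^{\rm D}$, and in general $u_n$ does \emph{not} vanish on $\partial\Omega_n^{\rm R}$. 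The remark that the Robin boundary form is $\|\nabla\cdot\|^2$-form-bounded is beside the point. Your closing sentence (``$u_n$ treated as an element of $\Wotlam$ paired against $v\restriction\Omega_n$'') does not define $t[v,v]$, and for that matter also introduces a second problem: to form $t_n[u_n, v\restriction\Omega_n]$ you need $v\restriction\Omega_n \in \dom(t_n)$, but $u\restriction\Omega_n$ has no reason to lie in $\dom(s_{0,n})$ since it need not (approximately) vanish near $\partial\Omega_n^{\rm D}$.

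The paper circumvents both obstacles by never pairing against $v$ or $u\restriction\Omega_n$: instead it fixes $\zeta_n\in C_0^\infty(B_{r_n+1}(0),\R)\subset C_0^\infty(\Omega_n,\R)$ with $\zeta_n\restriction B_{r_n}(0)=1$, replaces $\phi_n := (T_n-\lm_0)^{-1}\chi_{\Omega_n}f$ by $\psi_n := \zeta_n\phi_n$ (whose zero extension really is in $\WotRd$), extracts a weakly convergent subsequence of $\{\psi_n\}$ in $\WotRd\cap\dom(q_0)$ (hence strongly convergent in $\LRd$ by Rellich), proves $\|\phi_n-\psi_n\|\to0$ from the growth of $\re Q_0$ at infinity, and then identifies the limit by testing the form equation $t_n(\phi_n,\varphi)=\langle\chi_{\Omega_n}f,\varphi\rangle_n$ only against fixed $\varphi\in\CcRdC$, which are supported in $B_{r_n}(0)\subset\Omega_n$ for large $n$ and so legitimately lie in $\dom(t_n)$. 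The cutoff and the compactly supported test functions are exactly the devices your argument needs; without them, the form identity you wrote down is not well-defined. (The bootstrap you suggest, feeding Lemma~\ref{lem.discr.comp} into Lemma~\ref{lem.s.res.conv}, is not logically circular since the two lemmas are independent, but it is also unnecessary once the cutoff is in place.)
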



\begin{proof}
	I. \emph{Semi-angle $\theta < \pi/2$}: 
	Since $T$, $T_n$ are $m$-sectorial and the numerical ranges of $T_n$ satisfy \eqref{Tn.sec.S},
	the set $\mathcal S(M_{\rm Tr}) \cup \overline{\Num(T)}$ contains all spectra and is itself contained in some right half-plane.
	In particular, $(-\infty,\delta_1) \subset \bigcap_n \rho(T_n) \cap \rho(T)$ for some $\delta_1\in\R$.

	Now let $\lm_0 \notin \mathcal S(M_{\rm Tr}) \cup \overline{\Num(T)}$ be arbitrary; 
	without loss of generality, we assume that $\lm_0=0$, \ie~$0 \notin \mathcal S(M_{\rm Tr}) \cup \overline{\Num(T)}$; otherwise we replace $T$, $T_n$ by $T-\lm_0$, $T_n-\lm_0$, respectively.
	Then there exists $d_0>0$ such that $\dist(0, \overline{\Num(T_n)}) \geq d_0$ and  $\dist(0, \overline{\Num(T)}) \geq d_0$.

	We prove the claim by contradiction. Suppose that $T_n^{-1} \chi_{\Omega_n} f \rightarrow T^{-1} f$ in $L^2(\R^d,\C)$ does not hold. Then there exist  $\delta >0$ and an infinite subset $I \subset \N$ such that $\|T_n^{-1} \chi_{\Omega_n} f-T^{-1} f\|\geq \delta$ for all $n \in I$. We will show that $\{T_n^{-1} \chi_{\Omega_n} f\}_{n \in I}$ contains a subsequence converging to $T^{-1} f$, a contradiction.

		To simplify the notation, we set $f_n := \chi_{\Omega_n}f$, $\phi_n :=T_n^{-1} f_n$. Note that
	\begin{equation}
	\|\phi_n\| = \|T_n^{-1} f_n \| \leq \|T_n^{-1}\| \|f\| \leq \frac{ \|f\| }{d_0}
	\end{equation}
	and rewrite $T_n\phi_n=f_n $ in terms of forms,
	\begin{equation}\label{eq.tn.phin.phi}
	\forall \, \phi\in \dom(t_n):\quad t_n(\phi_n,\varphi)= \langle f_n, \phi \rangle_n.
	\end{equation}
	If we insert $\phi\!=\!\phi_n$ and take real parts in the equation in \eqref{eq.tn.phin.phi}, we obtain 
	\begin{equation}\label{retn.est.1}
	\|\nabla \phi_n \|_n^2 + \re \int_{\partial \Omega_n^{\rm R}} a_n |\phi_n|^2 \d  \sigma   
	+ \re  q_0[\phi_n] + \re  w[\phi_n] 
	= \re \langle f_n, \phi_n \rangle_n.
	\end{equation}
	Taking absolute values on both sides and using the relative $\|\nabla \cdot \|^2$-bounds of the boundary term and of $w$, 
	\cf~the trace embedding~\eqref{emb.est.n}, Assumption~\ref{ass.lam.n.a}, and~\eqref{w.est}, we arrive at
	\begin{equation}
	\begin{aligned}
	&(1 - 
	\sqrt \eps M_{\rm Tr} - b_w(1+\varepsilon)  ) \|\nabla \phi_n\|_n^2 
	+ \re q_0[\phi_n]
	\\
	&
	\leq 
	\|f\| \|\phi_n\| + \left( 
	\frac{M_{\rm Tr}}{\sqrt \eps}
	+ C_{w, \varepsilon}\right) \|\phi_n\|^2 
	\\
	& \leq 
	\left(
	\frac{1}{d_0} + \left(
	\frac{M_{\rm Tr}}{\sqrt \eps}
	+ C_{w, \varepsilon} \right) \frac{1}{d_0^2}
	\right)
	\|f\|^2
	=: K_1(\eps),
	\end{aligned}
	\end{equation}
	where $K_1(\eps)>0$ is independent of $n$.
	If we choose $\eps>0$ sufficiently small, we find that there exists $K_2\geq 0$, independent of $n$, such that 
	\begin{equation}\label{phin.est}
	\|\nabla\phi_n\|_{n} \leq K_2, \qquad \re q_0[\phi_n] \leq K_2.
	\end{equation}

	Let $\zeta_n \in C_0^{\infty}(B_{r_n+1}(0),\R) \subset C_0^{\infty}(\Omega_n,\R)$ be such that 
	\begin{equation}\label{omegan.def}
	\begin{aligned}
	& 
	0\leq \zeta_n \leq 1, \quad \zeta_n \restriction B_{r_n}(0) =1, 
	\quad 
	\| \zeta_n \|_{\infty} + \|\nabla \zeta_n \|_{\infty} \leq M_1,
	\end{aligned}
	\end{equation}
	where $r_n$ are the radii used in Assumption~\ref{ass.lam.n} and $M_1 >    0$ is independent of $n$.
	We define $\psi_n := \zeta_n \phi_n$. 
	Note that $\psi_n$ coincides with $\phi_n$ on $B_{r_n}(0)$ and its support is contained in $\Omega_n$. It is easy to see that $\psi_n \in \WotRd$ and that,
	by \eqref{phin.est} and \eqref{omegan.def}, there exists $K\geq 0$ such that 
	\begin{equation}\label{psin.est}
	\|\psi_n \| \leq \|\phi_n\|\leq\frac{\|f\|}{d_0}, \quad 
	\|\nabla\psi_n\| \leq K, \quad \re q_0[\psi_n] \leq K.
	\end{equation}
	Hence $\{\psi_n \}_{n \in I}$ is a bounded sequence in 
	$
	\mathcal H_1:=
	\WotRd\cap\dom(q_0)$
	equipped with the norm $(\|\nabla\cdot  \|^2 + \re q_0[\cdot]  + \|\cdot\|^2 )^{1/2}$.
	Therefore there exists a subsequence $\{n_k\}_k \subset I$ such that $\{\psi_{n_k}\}_k$ 
	 converges weakly in $\mathcal H_1$ to some $\psi \in \mathcal H_1$. Since $\mathcal H_1$ is compactly embedded in $\LRd$, \cf~Rellich's criterion \cite[Thm.\ XIII.65]{reedsimon}, the sequence $\{\psi_{n_k}\}_k$ converges to $\psi$ in $\LRd$. 
	
	Now we prove that  $\{\phi_{n_k}\}_k$ converges to $\psi$.
	The properties of $\zeta_n$, \cf\,\eqref{omegan.def}, imply
	\begin{equation}\label{phin.psin.conv.sec.1}
	\| \phi_n - \psi_n\|^2 \leq \int_{|x|\geq r_n} |\phi_n|^2 \d  x.
	\end{equation}
	Using $\re Q_0 >0$ and $\re Q_0(x) \to  \infty $ as $|x| \to  \infty$, we obtain
	\begin{equation}\label{phin.psin.conv.sec}
	K_2 \geq \re  q_0[\phi_n] \geq \int_{|x| \geq r_n} \re  Q_0 |\phi_n|^2 \d  x \geq \left( 
	\mathop{\rm ess \, inf}\limits_{|x| \geq r_n} \re  Q_0 
	\right)
	\int_{|x| \geq r_n} |\phi_n|^2 \d  x;
	\end{equation}
	hence, since $r_n\to \infty$, 
	\begin{equation}\label{eq.phin.psin.diff}
	\| \phi_{n} - \psi_{n}\| \to 0, \quad n \to  \infty,
	\end{equation}
	and thus $\{\phi_{n_k}\}_k$ converges to $\psi$.
	
	Finally, to obtain the contradiction, we prove that $\psi=T^{-1}f$, \ie 
	\begin{equation}\label{eq.F.and.tildeT.psi}
	\psi \in \dom (T), \quad f =T \psi.
	\end{equation} 
	To this end, we show that
	\begin{equation}\label{eq.t.psi.phi}
	\forall\, \varphi \in  \CcRdC:\quad  t_{n_k}(\phi_{n_k},\varphi) \to t(\psi,\varphi), \quad  k \to \infty;
	\end{equation}
	then $\langle f,\varphi\rangle=\lim_{k\to \infty}\langle f_{n_k},\varphi\rangle_{n_k}=t(\psi,\varphi)$ by~\eqref{eq.tn.phin.phi}, 
	and hence~\eqref{eq.F.and.tildeT.psi} follows from the representation theorem \cite[Thm.\ VI.2.1]{kato} and the fact that $\CcRdC$ is a core of $t$, \cf~Proposition \ref{prop.T.sec.def}. 
	
	Let $\varphi \in \CcRdC$. 
	There exists $n(\varphi) \in\N$ such that, for all $n \geq  n(\varphi)$, we have $\supp \varphi \subset B_{r_n}(0) \subset \Omega_{n}$
	and  therefore $\zeta_{n} \varphi = \varphi$, $\zeta_{n} \nabla \varphi = \nabla \varphi$ by \eqref{omegan.def} and
	\begin{equation}
	\begin{aligned}
	\langle \nabla \phi_{n}, \nabla \varphi \rangle_{n} &= \langle \nabla \psi_{n}, \nabla \varphi \rangle, 
	& 
	\int_{\Omega_{n}}   Q_0 \, \phi_{n} \overline \varphi \, \d  x  &= \int_{\Rd}  Q_0\,  \psi_{n} \overline \varphi \, \d  x,
	\\
	\int_{\partial \Omega_{n}^{\rm R}} a_{n} \, \phi_{n} \overline \varphi  \, \d  \sigma &=0, 
	& \langle f_{n}, \varphi \rangle_{n} &= \langle f, \varphi \rangle . 
	\end{aligned}
	\end{equation}
	Since $w$ is relatively bounded with respect to $\|\nabla \cdot \|^2$, it is a bounded form on~$\mathcal H_1$. Therefore, $w(\psi_{n_k},g) \to w(\psi,g)$ for any $g \in \mathcal H_1$. 
	Recall that the function $\zeta$ in Assumption~\ref{ass.q.sec} satisfies ${\rm supp}\,\zeta\subset B_R(0)$.
	Hence, since $\psi_{n_k}$ and $\phi_{n_k}$ coincide on $B_{r_n}(0)\supset B_R(0)$, we have $(\phi_{n_k}-\psi_{n_k})\sqrt{\zeta}=0$.
	Thus the splitting property~\eqref{eq.splitting.w} of~$w$ in Assumption~\ref{ass.q.sec} and the polarization identity~\cite[Eq.\ VI.(1.1)]{kato} imply 
	\begin{equation}\label{w.phi.psi}
	w(\phi_{n_k},\varphi) - w(\psi_{n_k},\varphi) = w_2(\sqrt{1 -\zeta}\, (\phi_{n_k} - \psi_{n_k}),\sqrt{1 -\zeta}\, \varphi).
	\end{equation}
	Because the form $w_2$ is bounded, \cf~Assumption~\ref{ass.q.sec.w.2},
	by~\eqref{eq.phin.psin.diff}, we have $w(\phi_{n_k},\varphi) - w(\psi_{n_k},\varphi) \to 0$.
	Altogether, we obtain 
			\begin{equation}
		\lim_{k\to \infty}t_{n_k}(\phi_{n_k},\varphi) 
		= \lim_{k \to \infty}t(\psi_{n_k},\varphi)+(w(\phi_{n_k},\varphi) - w(\psi_{n_k},\varphi)) 
		= \lim_{k \to \infty}t(\psi_{n_k},\varphi),
		\end{equation}
	and the latter equals $t(\psi,\varphi)$ since $\psi$ is the weak limit of $\{\psi_{n_k}\}_{k}$ in $\mathcal H_1$.
	
	%
	II. \emph{Semi-angle $\theta \geq \pi/2$}:
	The intersection of the resolvent sets is non-empty since there exists $\delta_2\in\R$ such that 
	$(-\infty,\delta_2) \subset {\mathcal R}(b') \cap{\mathcal R}(b',M_{\rm Tr}) \neq \emptyset$, \cf~\eqref{hyperbolaset}, \eqref{hyperbolaset2}. 
	
	Let $\lm_0 \in  {\mathcal R(b')} \cap {\mathcal R(b',M_{\rm Tr})} \supset (-\infty,\delta_2)$ be arbitrary.
	We can follow \cite[Thm.\ 1]{Weidmann-1997-34} which can be straightforwardly generalized to the non-selfadjoint case if a uniform bound on $\|(T_n-\lm_0)^{-1}\|$ is available; such a bound  is given by~\eqref{Tn.res.bound}.
	In order to check the assumptions of  \cite[Thm.\ 1]{Weidmann-1997-34}, recall that $ \dom(\Tm)=\CcRdC$ is a  core of $T$, \cf~Proposition \ref{prop.T.def}. 
	For every $f\in \dom(\Tm)$ there exists $n_0(f) \in\N$ such that  $\supp f \subset \Omega_n$ for all $n \geq n_0(f)$.
	Then, for $n\geq n_0(f)$, we have $f_n:=\chi_{\Omega_n}f \in \dom(T_n)$ and $Tf = T_n f_n$. 
	Notice that,  since $\dom(T_n)$ is not described explicitly,  we use the first representation theorem \cite[Thm.\ VI.2.1]{kato} to verify the latter.
\end{proof}

\begin{lemma}\label{lem.discr.comp}
Let the assumptions of Theorem~{\upshape\ref{thm.norm.res.conv}} be satisfied and let $I\subset \N$ be an arbitrary infinite subset. 
Then every sequence of elements  $\phi_n \in \dom(T_n)$, $n \in I$, such that $\left\{\|T_n \phi_n\|_n^2+\|\phi_n\|_n^2\right\}_n$ is bounded has a convergent subsequence in $\LRd$.
\end{lemma}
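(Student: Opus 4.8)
The plan is to distil the cutoff-and-compactness argument already carried out in Case~I of the proof of Lemma~\ref{lem.s.res.conv} and to run it in both cases simultaneously. Let $\{\phi_n\}_{n\in I}$ be as in the statement and put $C_0:=\sup_{n\in I}\big(\|T_n\phi_n\|_n^2+\|\phi_n\|_n^2\big)<\infty$. First I would extract, from the $n$-independent estimates already established, two uniform bounds: a uniform $W^{1,2}$-bound $\sup_n\|\nabla\phi_n\|_n<\infty$, and a uniform bound $\sup_n\int_{\Omega_n}V|\phi_n|^2<\infty$ for a weight $V\geq0$ with $V(x)\to\infty$ as $|x|\to\infty$. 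In Case~I, testing the form gives $\re t_n[\phi_n]=\re\langle T_n\phi_n,\phi_n\rangle_n\leq\|T_n\phi_n\|_n\|\phi_n\|_n$, which together with \eqref{eq.retn.vs.nablaf} (whose constants are positive and independent of $n$) yields $\sup_n\|\nabla\phi_n\|_n<\infty$ and $\sup_n\re q_0[\phi_n]<\infty$, exactly as in \eqref{retn.est.1}--\eqref{phin.est}; here one takes $V:=\re Q_0$ and notes that $\re Q_0\geq(\cos\theta)|Q_0|\to\infty$ by the sectoriality \eqref{Q0.sec} and Assumption~\ref{ass.q.sec.unbdd}. In Case~II, the uniform graph-norm equivalence \eqref{Tn.graph.norm} of Proposition~\ref{prop.comp.res} bounds $\|\DnDR\phi_n\|_n$, $\|Q_0\phi_n\|_n$, $\|\phi_n\|_n$ uniformly in $n$, and then \eqref{nabla.n.est} bounds $\|\nabla\phi_n\|_n$; one takes $V:=|Q_0|^2$, which tends to infinity by Assumption~\ref{ass.q.non-sec.unbdd}.

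Next I would localise. With the radii $r_n$ from Assumption~\ref{ass.lam.n.ex}, pick cutoffs $\zeta_n\in C_0^\infty(B_{r_n+1}(0),\R)$ with $0\leq\zeta_n\leq1$, $\zeta_n\restriction B_{r_n}(0)=1$ and $\|\nabla\zeta_n\|_\infty\leq M_1$ independent of $n$, as in \eqref{omegan.def}, and set $\psi_n:=\zeta_n\phi_n$. Since $\supp\zeta_n\subset B_{r_n+1}(0)\subset\Omega_n$ and $\phi_n\in W^{1,2}(\Omega_n,\C)$, the zero-extension of $\psi_n$ lies in $\WotRd$, and from $\nabla\psi_n=\phi_n\nabla\zeta_n+\zeta_n\nabla\phi_n$ together with $0\leq\zeta_n\leq1$ one reads off $\|\psi_n\|\leq\|\phi_n\|_n$, $\|\nabla\psi_n\|\leq M_1\|\phi_n\|_n+\|\nabla\phi_n\|_n$ and $\int_{\R^d}V|\psi_n|^2\leq\int_{\Omega_n}V|\phi_n|^2$, all uniform in $n$ by the first step. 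Hence $\{\psi_n\}_{n\in I}$ is bounded in the Hilbert space $\{f\in\WotRd:V^{1/2}f\in\LRd\}$ equipped with the norm $\big(\|\nabla\cdot\|^2+\int_{\R^d}V|\cdot|^2+\|\cdot\|^2\big)^{1/2}$, which is compactly embedded in $\LRd$ by Rellich's criterion \cite[Thm.\ XIII.65]{reedsimon}, since $V\geq0$ and $V(x)\to\infty$ as $|x|\to\infty$. Thus some subsequence $\psi_{n_k}$ converges in $\LRd$ to a limit $\psi$.

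Finally I would compare $\phi_n$ with $\psi_n$. Since $\zeta_n\equiv1$ on $B_{r_n}(0)$, for every fixed $R'>0$ and all $n$ with $r_n\geq R'$,
\[
\|\phi_n-\psi_n\|^2\leq\int_{|x|\geq r_n}|\phi_n|^2\leq\int_{|x|\geq R'}|\phi_n|^2\leq\Big(\mathop{\rm ess \, inf}\limits_{|x|\geq R'}V\Big)^{-1}\sup_{m\in I}\int_{\Omega_m}V|\phi_m|^2;
\]
letting $n\to\infty$ and then $R'\to\infty$ gives $\|\phi_n-\psi_n\|\to0$, as in \eqref{phin.psin.conv.sec.1}--\eqref{eq.phin.psin.diff}. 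Combining this with $\psi_{n_k}\to\psi$ shows $\phi_{n_k}\to\psi$ in $\LRd$, which is the required convergent subsequence. I do not expect a genuine obstacle: the whole argument reduces to the $n$-independent estimates of Propositions~\ref{prop.Tn.def.sec} and \ref{prop.comp.res} plus Rellich's criterion, and the point needing attention is precisely the uniformity in $n$ of the $W^{1,2}$- and weighted-$L^2$-bounds on $\phi_n$ — which is why those propositions were formulated with $n$-independent constants — together with the elementary fact that $\zeta_n\phi_n$, extended by zero, lies in $\WotRd$.
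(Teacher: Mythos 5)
Your proof is correct and follows essentially the same route as the paper's: for Case~I you reproduce the estimate $\re t_n[\phi_n]\leq\|T_n\phi_n\|_n\|\phi_n\|_n$ and then reuse \eqref{eq.retn.vs.nablaf}/\eqref{retn.est.1}--\eqref{phin.est}, for Case~II you use \eqref{Tn.graph.norm} and \eqref{nabla.n.est}, and in both cases you cut off with $\zeta_n$, invoke Rellich's criterion, and show $\|\phi_n-\psi_n\|\to0$ exactly as in \eqref{phin.psin.conv.sec.1}--\eqref{eq.phin.psin.diff} and \eqref{phi.psi.cnv.2}. The only cosmetic difference is that you unify the two cases by phrasing the weighted bound in terms of a single weight $V$ ($\re Q_0$ resp.\ $|Q_0|^2$), whereas the paper treats the two cases sequentially and references the earlier estimates rather than repeating them.
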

\begin{proof}
Let $\phi_n \in \dom(T_n)$, $n \in I$, and $M\geq 0$ be such that
\begin{equation}\label{phin.Tng}
\|T_n \phi_n \|^2_{n} + \|\phi_n\|^2_{n} \leq  M.
\end{equation}

I. \emph{Semi-angle $\theta < \pi/2$}: 
The bound \eqref{phin.Tng} implies that 
$|t_n[\phi_n]| \leq M/2$. 
Define $f_n:=T_n\phi_n$ and note that $\|f_n\|_n^2\leq M$.
Now we proceed analogously as in~\eqref{eq.tn.phin.phi}--\eqref{eq.phin.psin.diff} to find a convergent subsequence  of \vspace{1mm} $\{\phi_n\}_n$.

II. \emph{Semi-angle $\theta \geq \pi/2$}:
Let $\{\zeta_n\}_n$ be the family of functions defined in \eqref{omegan.def}. We set $\psi_n := \zeta_n \phi_n$. Using the inequality \eqref{nabla.n.est} and the equivalence of norms in \eqref{Tn.graph.norm}, we obtain the existence of $\widetilde M\geq 0$ such that
$\|\nabla \psi_n \| + \|Q_0 \psi_n\| \leq \widetilde M$. Hence, it follows from Rellich's criterion \cite[Thm.\ XIII.65]{reedsimon} that $\{\psi_n\}_n$ is contained in a compact subset of $L^2(\R^d,\C)$, thus it has a convergent subsequence $\{\psi_{n_k}\}_k$. Finally, using an analogous argument as \eqref{phin.psin.conv.sec.1}--\eqref{phin.psin.conv.sec} with \eqref{phin.psin.conv.sec} replaced by
\begin{equation}\label{phi.psi.cnv.2}
\begin{aligned}
\widetilde M^2  & 
\geq 
\int_{|x|\geq r_n} |Q_0|^2 |\phi_n|^2 \d  x
\geq 
\left(\mathop{\rm ess \, inf}\limits_{|x|\geq r_n} |Q_0(x)|^2\right) \int_{|x|\geq r_n} |\phi_n|^2 \d  x,
\end{aligned}
\end{equation}
one may show that $\{\phi_{n_k}\}_k$ has the same limit as $\{\psi_{n_k}\}_k$.
\end{proof}

\begin{proof}[Proof of Theorem \upshape \ref{thm.norm.res.conv}]
By Lemma~\ref{lem.s.res.conv}, there exists $\gamma>0$ such that we have generalized strong resolvent convergence at all $\lambda_0 \in (-\infty,-\gamma)$.
One may verify that Assumptions~\ref{ass.q.sec},~\ref{ass.q.non-sec},~\ref{ass.lam.n} remain valid under complex conjugation of $q$, $Q$, $a_n$, $W$, 
and with $w$ replaced by the adjoint form $w^*$. 
Hence Propositions \ref{prop.T.sec.def}, \ref{prop.T.def}, \ref{prop.Tn.def.sec}, \ref{prop.comp.res} define closed operators $\widehat T, \widehat T_n$, and the latter coincide with the adjoints $T^*\!$, $T_n^*$, \cf~\cite[Thm.\ VI.2.5]{kato} (for semi-angle $\theta \!<\! \pi/2$) and \cite[Thm.\ VII.2.5,\ 2.6,\ Cor.~2.7]{edmundsevans}, \cite[Cor.\ 1]{Hess-1970-45} (for semi-angle $\theta \!\geq\! \pi/2$). 
Moreover, Lemma \ref{lem.s.res.conv} implies that $(T_n^*-\lm_0)^{-1}\chi_{\Omega_n}$ converges strongly to $(T^*-\lm_0 )^{-1}$.
Then \cite[Thm.\ 3.4]{anselonepalmer} yields that the resolvents converge even in norm provided we verify that $\{(T_n-\lm_0)^{-1} \chi_{\Omega_n}:\,n\in\N\}$ and $\{(T_n^*-\lm_0)^{-1} \chi_{\Omega_n}:\,n\in\N\}$ are collectively compact sets. The claim for the former set follows from~\cite[Prop.\ 2.1]{anselonepalmer} since every $(T_n-\lm_0)^{-1}$ is compact and the sequence of embeddings \eqref{dis.comp} is discretely compact, \cf~Lemma~\ref{lem.discr.comp}; the reasoning  for the set of adjoint operators is~analogous.

Now let $\lm \in \rho(T)$ with $\lm\neq \lm_0$ be arbitrary. By the spectral mapping theorem we have $\mu:=(\lm-\lm_0)^{-1} \in \rho\big((T_n-\lm_0)^{-1}\big)$.
By~\cite[Thm.\ IV.2.25]{kato}, together with \eqref{norm.conv} for $\lm=\lm_0$, there exists $n_\lm\in\N$ such that, for all $n \geq n_\lm$,
we have $\mu \in \rho\big((T_n-\lm_0)^{-1}\big)$ and so $\lm\!\in\!\rho(T_n)$.
A straightforward application of the first resolvent identity yields
\begin{equation}\label{eq.normconv.lm}
\begin{aligned}
&\left(
(T_n-\lm)^{-1}\chi_{\Omega_n} - (T-\lm)^{-1} 
\right) 
S_n
\\
& 
= 
\left(
I + (\lm-\lm_0) (T-\lm)^{-1}
\right)
\left( 
(T_n-\lm_0)^{-1}\chi_{\Omega_n} - (T-\lm_0)^{-1} 
\right)
\end{aligned}  
\end{equation}
with $S_n=I - (\lm - \lm_0) (T_n-\lm_0)^{-1}\chi_{\Omega_n}$.
Since $S:=\lim_{n \to \infty} S_n$ has a bounded inverse, the operator $S_n$ is boundedly invertible for all sufficiently large $n$
and $\|S_n^{-1}\|$ is uniformly bounded, \cf~\cite[Thm.\ IV.1.16]{kato}.
Now the convergence \eqref{norm.conv} follows from the convergence at $\lm_0$ and~\eqref{eq.normconv.lm}.
\end{proof}

\section{Convergence of spectra and pseudospectra}
\label{sec.sp.conv}

In the following theorem, we prove that $\{T_n\}_n$ is a spectrally exact approximation of $T$, i.e.\ \emph{all} eigenvalues of $T$ are approximated and \emph{no} spectral pollution occurs.
In addition, we prove norm convergence of the spectral projections.

\begin{theorem}\label{thm.conv.spectrum}
Let Assumption~\ref{ass.lam.n} be satisfied and assume that
\begin{enumerate}[\upshape I.]
	\item in the case of semi-angle $\theta < \pi/2$, Assumption~\ref{ass.q.sec} holds and $T$, $T_n,$ $n\in \N$, are the operators defined in Propositions~{\rm\ref{prop.T.sec.def},~\ref{prop.Tn.def.sec}}, respectively;
	\item in the case of semi-angle $\theta \geq \pi/2$, Assumption~\ref{ass.q.non-sec} holds with $\bU<1$ and $T$,~$T_n,$ $n\!\in\! \N$, are the operators defined in Propositions~{\rm\ref{prop.T.def},~\ref{prop.comp.res}}, respectively. 
\end{enumerate}	
 
Then the following hold:
\begin{enumerate}[\upshape i)]
\item \emph{Spectral inclusion with preservation of algebraic multiplicity:}  
If $\lm \in \C$ is an eigenvalue of $T$ of algebraic multiplicity $m$,
then, for $n$ large enough,  $T_n$ has exactly $m$ eigenvalues {\rm(}repeated according to their algebraic multiplicities{\rm)} in a neighbourhood of $\lm$ which converge to $\lm$ as $n\to \infty$ and the corresponding spectral projections converge in norm.
\item \emph{No spectral pollution:} 
If $\{\lm_n\}_n \subset \C$ is a sequence of eigenvalues $\lm_n \in \sigma(T_n)$, $n \in \N$, such that there exists $\lm \in\C$ and a subsequence $\{\lm_{n_k}\}_k \subset \{\lm_n\}_n$ with $\lm_{n_k} \to \lm$ as $k \to \infty$, then $\lm$ is an eigenvalue of \,$T$.
\end{enumerate}
\end{theorem}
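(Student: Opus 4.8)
The plan is to deduce both assertions from the generalized norm resolvent convergence established in Theorem~\ref{thm.norm.res.conv}, exploiting that $T$ and every $T_n$ have compact resolvent, \cf~Propositions~\ref{prop.T.sec.def}~ii), \ref{prop.T.def}~iv), \ref{prop.Tn.def.sec}~i), \ref{prop.comp.res}~i). In particular, $\sigma(T)$ and each $\sigma(T_n)$ consist only of isolated eigenvalues of finite algebraic multiplicity, and Riesz projections are available; moreover, by Remark~\ref{rem.loc.unif} the convergence \eqref{norm.conv} is locally uniform in $\rho(T)$.

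For ii) I would argue by contradiction. Suppose $\lm_{n_k}\to\lm$ with $\lm_{n_k}\in\sigma(T_{n_k})$ but $\lm\in\rho(T)$. By Remark~\ref{rem.loc.unif} there are $r_\lm>0$ and $n_\lm\in\N$ with $B_{r_\lm}(\lm)\subset\bigcap_{n\ge n_\lm}\rho(T_n)$; but $\lm_{n_k}\in B_{r_\lm}(\lm)$ for $k$ large, contradicting $\lm_{n_k}\in\sigma(T_{n_k})$. Hence $\lm\in\sigma(T)$, and since $T$ has compact resolvent, $\lm$ is an eigenvalue.

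For i) I would use Riesz projections. Given an eigenvalue $\lm$ of $T$ of algebraic multiplicity $m$, fix $\delta>0$ so small that $\overline{B_\delta(\lm)}$ meets $\sigma(T)$ only in $\lm$; then $\Gamma:=\p B_\delta(\lm)\subset\rho(T)$, and covering the compact set $\Gamma$ by finitely many balls from Remark~\ref{rem.loc.unif} shows that, for all large $n$, $\Gamma\subset\rho(T_n)$ and $\sup_{z\in\Gamma}\|(T_n-z)^{-1}\chi_{\Omega_n}-(T-z)^{-1}\|\to0$ as $n\to\infty$. I would then introduce
\[
 P:=\frac{1}{2\pi\I}\int_\Gamma (z-T)^{-1}\,\rd z \ \text{ on } \LRd, \qquad
 \widehat P_n:=\frac{1}{2\pi\I}\int_\Gamma (z-T_n)^{-1}\chi_{\Omega_n}\,\rd z \ \text{ on } \LRd,
\]
where $P$ is the Riesz projection of $T$ at $\lm$, with $\operatorname{rank}P=m$ since $T$ has discrete spectrum, \cf~\cite[Sec.\ III.6.5]{kato}. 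Because the paper's convention identifies $\Llam$ with a subspace of $\LRd$ on which $\chi_{\Omega_n}$ acts as the identity, $\widehat P_n$ is again a projection whose range is the (injective) image of the genuine spectral projection $P_n:=\frac{1}{2\pi\I}\int_\Gamma(z-T_n)^{-1}\,\rd z$ of $T_n$ in $\Llam$, so $\operatorname{rank}\widehat P_n=\operatorname{rank}P_n$. The displayed uniform convergence on $\Gamma$ gives $\|\widehat P_n-P\|\to0$, hence $\|\widehat P_n-P\|<1$ for large $n$, and the classical stability of ranks of norm-close projections (\cf~\cite[Lem.\ I.4.10]{kato}) yields $\operatorname{rank}P_n=\operatorname{rank}\widehat P_n=\operatorname{rank}P=m$. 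Thus, for large $n$, $T_n$ has eigenvalues of total algebraic multiplicity exactly $m$ inside $\Gamma$ and none on $\Gamma$, while $\widehat P_n\to P$ in norm is the asserted convergence of spectral projections. Applying this along a sequence $\delta_j\downarrow0$ with (increasing) thresholds $N_j$, one sees that for $n\ge N_j$ these $m$ eigenvalues lie within $\delta_j$ of $\lm$, hence converge to $\lm$; alternatively, \cite{Osborn-1975-29} gives the same conclusion directly once the norm resolvent convergence is available.

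The main obstacle is the bookkeeping forced by the \emph{generalized} setting: the true spectral projections $P_n$ act in the spaces $\Llam$ while $P$ acts in $\LRd$, so one must pass to the compressed/extended operators $\widehat P_n$ on $\LRd$, check that they are projections and that $\operatorname{rank}\widehat P_n=\operatorname{rank}P_n$, and only then invoke the rank-stability lemma together with the identification of $\operatorname{rank}P$ with the algebraic multiplicity. Verifying that the contour $\Gamma$ and the radii $\delta_j$ can be chosen compatibly with the $n$-thresholds coming from Remark~\ref{rem.loc.unif} is routine.
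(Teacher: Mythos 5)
Your argument matches the paper's own proof: part ii) via Remark~\ref{rem.loc.unif} (locally uniform resolvent convergence excludes spurious limits), and part i) via Riesz projections along a small circle $\Gamma$, using the uniform-on-$\Gamma$ resolvent convergence to get $\|\widehat P_n-P\|\to0$ and then rank stability of norm-close projections, together with the identification $\operatorname{rank}\widehat P_n=\operatorname{rank}P_n$ since $\chi_{\Omega_n}P_n=P_n$. The bookkeeping about $\widehat P_n$ being a projection with the same rank as $P_n$, and shrinking $\delta_j\downarrow0$ to conclude convergence of the eigenvalues, is exactly the content the paper leaves implicit, so this is a faithful and correct rendering of the same route.
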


\begin{proof}
i)
Since $T$ has compact resolvent, every eigenvalue $\lm\in\sigma(T)$ is isolated, \ie~there exists $\epsilon>0$ such that $\overline{B_{\epsilon}(\lm)}\backslash\{\lm\}\subset\rho(T)$.
By Theorem~\ref{thm.norm.res.conv}, we have  $\|(T-z)^{-1}-(T_n-z)^{-1}\chi_{\Omega_n}\|\to 0$ for every $z\in\partial B_{\epsilon}(\lm)$
and the convergence is uniform in $\partial B_{\epsilon}(\lm)$ since $\partial B_{\epsilon}(\lm)$ is compact,  
\cf~Remark~\ref{rem.loc.unif}. 
Hence the spectral projections 
\begin{equation}
 \begin{aligned}
  E:=-\frac{1}{2\pi\I}\int_{\partial B_{\epsilon}(\lm)} (T-z)^{-1} \d z, \qquad 
  E_n:=-\frac{1}{2\pi\I}\int_{\partial B_{\epsilon}(\lm)} (T_n-z)^{-1} \d z
 \end{aligned}
\end{equation}
satisfy $\|E-E_n\chi_{\Omega_n}\|\to 0$ and therefore there exists $n_0 \in\N$ such that, for all $n\geq n_0$, 
\vspace{1mm} $\text{rank}\, E_n=\text{rank}\, (E_n\chi_{\Omega_n})=\text{rank}\, E=m.$

ii)
Spectral pollution cannot occur since it would contradict the locally uniform convergence of the resolvents, \cf~Remark~\ref{rem.loc.unif}. 
%
\end{proof}

Based on \cite[Thm.\ 2]{Osborn-1975-29}, we prove an estimate on the convergence rate of the arithmetic mean of the eigenvalues in terms of the decay rate of the functions in the corresponding algebraic eigenspace. 
An analogous result can be obtained, using \cite[Thm.\ 6]{Osborn-1975-29}, for the individual eigenvalues instead of their arithmetic mean;
if, however, $\lm$ is not semi-simple, \ie~$\lm$ has ascent greater than one, then the convergence of the individual eigenvalues is slower than the one of their arithmetic~mean.

\begin{theorem}\label{thm.conv.rate}
Let Assumption \ref{ass.lam.n} be satisfied and assume that
\begin{enumerate}[\upshape I.]
	\item in the case of semi-angle $\theta \!<\! \pi/2$, Assumption~\ref{ass.q.sec} holds and 
	for every $\varphi\!\in\! \CcRdC$ and $f,g\in\dom(w)$, 
	\begin{equation}\label{ass.w.loc}
	w(\varphi f,g)=w(f,\overline\varphi g),
	\end{equation}
	and $T$, $T_n$, $n\!\in\! \N$, are the operators defined in Propositions~{\rm\ref{prop.T.sec.def}, \ref{prop.Tn.def.sec}}, respectively;
	\item in the case of semi-angle $\theta \geq \pi/2$, Assumption~\ref{ass.q.non-sec} holds with $\bU<1$ and $T$,~$T_n,$ $n\!\in\! \N$, are the operators defined in Propositions~{\rm\ref{prop.T.def},~\ref{prop.comp.res}}, respectively. 
\end{enumerate}	
Let $\lm\in\sigma(T)$ be an eigenvalue of algebraic multiplicity $m$,  let $\Lla$ be the corresponding algebraic eigenspace and let $\{\lm_{1;n},\dots,\lm_{m;n}\} \subset \sigma(T_n)$ be the eigenvalues of $T_n$ converging to $\lm$ as $n \to \infty$, \cf~Theorem~{\rm\ref{thm.conv.spectrum}}.
Then there exists $C\geq 0$, independent of $n$, such that
\begin{equation}\label{conv.rate.of.lambda}
\begin{aligned}
\bigg|
\lm-\frac 1m \sum_{j=1}^m\lm_{j;n}
\bigg|
\leq C\, \max_{\substack{\phi\in \Lla \\ \|\phi\|=1}}\big\|\phi\restriction \R^d\backslash B_{r_n}(0)\big\|
\end{aligned}
\end{equation}
where $r_n$ are the radii used in Assumption~{\rm \ref{ass.lam.n.ex}}.
\end{theorem}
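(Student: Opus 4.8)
The plan is to invoke the abstract eigenvalue convergence rate estimate of Osborn \cite[Thm.~2]{Osborn-1975-29}, which bounds $|\lm - \frac1m\sum_j \lm_{j;n}|$ in terms of the quantity $\|(E - E_n\chi_{\Omega_n})\restriction \Lla\|$, where $E$, $E_n$ are the spectral projections from the proof of Theorem~\ref{thm.conv.spectrum}; more precisely, the relevant bound involves $\sup_{\phi\in\Lla, \|\phi\|=1}\|(E_n\chi_{\Omega_n} - E)\phi\| \cdot \|(E_n^*\chi_{\Omega_n} - E^*)\|$-type terms, and in our compact-resolvent setting it suffices to control how well a fixed generalized eigenfunction $\phi\in\Lla$ is reproduced by $E_n\chi_{\Omega_n}$. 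So the first step is to reduce \eqref{conv.rate.of.lambda} to an estimate of the form
\begin{equation*}
\big\|(T_n-z)^{-1}\chi_{\Omega_n}\phi - (T-z)^{-1}\phi\big\| \leq C\,\|\phi\restriction \R^d\setminus B_{r_n}(0)\|
\end{equation*}
uniformly for $z$ on the contour $\partial B_\epsilon(\lm)$ and for $\phi$ in the (finite-dimensional) algebraic eigenspace $\Lla$, and then integrate over $z$.

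The heart of the matter is therefore a \emph{quantitative} version of the generalized norm resolvent convergence of Theorem~\ref{thm.norm.res.conv}, but only when applied to the special vectors $\phi\in\Lla$, which are smooth (lie in $\dom(T^k)$ for all $k$) and decay fast because $|Q_0(x)|\to\infty$. The key step is: given $\phi\in\Lla$, set $u := (T-z)^{-1}\phi$ (which again lies in $\dom(T^\infty)$ and decays), let $\zeta_n$ be the cutoffs from \eqref{omegan.def} with $\zeta_n\restriction B_{r_n}(0)=1$, and compare $u_n := \zeta_n u \in \dom(T_n)$ with the true approximant $(T_n-z)^{-1}\chi_{\Omega_n}\phi$. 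One computes $(T_n-z)u_n = \chi_{\Omega_n}\phi + [\text{commutator terms}] + (\text{error from } \chi_{\Omega_n}\phi \text{ vs } \phi)$, where the commutator $[-\Delta,\zeta_n]u = -(\Delta\zeta_n)u - 2\nabla\zeta_n\cdot\nabla u$ and the potential-difference terms are all supported in $\{r_n\le|x|\le r_n+1\}$, hence bounded by a constant times $\|u\restriction\{|x|\ge r_n\}\|$ in a suitable graph norm; since $u$ decays, this is in turn controlled by $\|\phi\restriction\R^d\setminus B_{r_n}(0)\|$ via the resolvent bounds and the weight $|Q_0|$ (Case~II) or $\re q_0$ (Case~I). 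Applying the uniform resolvent bound $\|(T_n-z)^{-1}\| \le C$ on the contour (from Propositions~\ref{prop.Tn.def.sec}(ii) / \ref{prop.comp.res}(iii) plus Remark~\ref{rem.loc.unif}) then gives $\|(T_n-z)^{-1}\chi_{\Omega_n}\phi - u_n\| \le C\|\phi\restriction\R^d\setminus B_{r_n}(0)\|$, and since $\|u - u_n\|$ is itself of the same order, the reduction is complete.

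In Case~I ($\theta<\pi/2$) the operators are only defined through forms, so the commutator computation must be done at the form level: the hypothesis \eqref{ass.w.loc}, $w(\varphi f,g)=w(f,\overline\varphi g)$ for $\varphi\in\CcRdC$, is exactly what makes the singular form $w$ commute with the multiplication by $\zeta_n$ (whose support meets $B_R(0)$ only where $\zeta_n\equiv 1$ anyway, so $w$ contributes no error term), and $t_n(\zeta_n u,\varphi) - \langle\chi_{\Omega_n}\phi,\varphi\rangle_n$ reduces to the $\|\nabla\zeta_n\|_\infty$- and $\|\Delta\zeta_n\|_\infty$-bounded boundary-layer terms tested against $\varphi$; one then represents $u_n$ back as an element of $\dom(t_n)$ and estimates via \eqref{eq.retn.vs.nablaf}. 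In Case~II the computation is the operator-level one sketched above, using Proposition~\ref{prop.comp.res}(ii) for the graph-norm equivalence and \eqref{phi.psi.cnv.2}-type bounds to absorb $\|\nabla u\restriction\{|x|\ge r_n\}\|$ and $\|u\restriction\{|x|\ge r_n\}\|$ into $\|Q_0 u\restriction\{|x|\ge r_n\}\|$, which is then bounded by $C\|\phi\restriction\R^d\setminus B_{r_n}(0)\|$ using $\|Q_0 u\| \le K(\|Tu\| + \|u\|) \le K'\|\phi\|$ together with the monotone lower bound $\mathrm{ess\,inf}_{|x|\ge r_n}|Q_0|\to\infty$.

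The main obstacle I anticipate is making the dependence on $\Lla$ (rather than on a single $\phi$) and on $z\in\partial B_\epsilon(\lm)$ genuinely uniform, and in particular checking that the constant $C$ in \eqref{conv.rate.of.lambda} can be taken independent of $n$: this requires that the cutoff constants $M_1$ in \eqref{omegan.def}, the uniform resolvent bounds on the contour, and the graph-norm equivalence constants $\widetilde k,\widetilde K$ are all $n$-independent (they are, by the cited propositions), and that $\Lla$ is finite-dimensional (it is, since $T$ has compact resolvent) so that the supremum over the unit ball of $\Lla$ is attained and the passage from "fixed $\phi$" to "sup over $\phi$" costs only a dimensional constant. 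Verifying that the precise form of Osborn's estimate genuinely produces the \emph{restriction} norm $\|\phi\restriction\R^d\setminus B_{r_n}(0)\|$ on the right-hand side — rather than, say, a full graph-norm defect — is the delicate bookkeeping point, and is exactly where the fast decay of $\phi$ and of $(T-z)^{-1}\phi$, enforced by $|Q_0(x)|\to\infty$, is used.
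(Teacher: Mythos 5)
Your overall scaffolding matches the paper's: reduce via Osborn's \cite[Thm.\ 2]{Osborn-1975-29} to a bound on the resolvent-difference applied to vectors $\phi\in\Lla$, and then exploit the fast decay of $\phi$ and of $\psi:=(T-\mu)^{-1}\phi$ through a cutoff commutator argument. (Minor point: the paper does not integrate over a contour $\partial B_\epsilon(\lm)$; it fixes a single $\mu\in\rho(T)$ and applies Osborn directly to $(T-\mu)^{-1}$, $(T_n-\mu)^{-1}\chi_{\Omega_n}$ via the spectral mapping theorem, which avoids the uniformity-over-the-contour bookkeeping you were worried about.)

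However, the central step in your proposal has a genuine gap. You propose to compute, at the operator or form level, $(T_n-z)(\zeta_n u)-\chi_{\Omega_n}\phi$, which produces the commutator error $-(\Delta\zeta_n)u - 2\nabla\zeta_n\cdot\nabla u$ localized in $\{r_n\le|x|\le r_n+1\}$. To conclude you then need a bound on $\|\nabla u\restriction\{|x|\ge r_n\}\|$ at the rate $\|\phi\restriction\R^d\setminus B_{r_n}(0)\|$. Your proposed absorption via \eqref{phi.psi.cnv.2} and the graph-norm equivalence does not deliver this: those tools give a global bound $\|Q_0 u\|\leq K'\|\phi\|$ and then $\|u\restriction\{|x|\ge r_n\}\|\lesssim\|\phi\|/\mathop{\rm ess\,inf}_{|x|\ge r_n}|Q_0|$, which is the \emph{weaker} decay rate of Remark~\ref{rem.dec}, not the rate $\|\phi\restriction\R^d\setminus B_{r_n}(0)\|$ claimed in \eqref{conv.rate.of.lambda} (these can differ significantly, e.g.\ for exponentially decaying eigenfunctions). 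Moreover a localized bound on $\|\nabla u\restriction\{|x|\ge r_n\}\|$ does not follow from the global graph-norm equivalence without an additional IMS-type localization, which reintroduces the same problem one level up.

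The paper circumvents this by a duality trick that removes $\nabla\psi$ from the tail region entirely. It writes the resolvent difference acting on $\phi$ via $\|g\|=\sup_{f\neq 0}|\langle g,f\rangle|/\|f\|$, introduces $g_n:=(T_n^*-\bar\mu)^{-1}\chi_{\Omega_n}f$, and manipulates the commutator so that after one more integration by parts (\eqref{estimate.sect.1}, \eqref{estimate.nonsect}) the result is $\langle\psi\Delta\zeta_n,g_n\rangle + 2\langle\psi\nabla\zeta_n,\nabla g_n\rangle$. Here only $\psi$ (not $\nabla\psi$) appears multiplied by the $n$-dependent cutoff derivatives, so the restriction norm $\|\psi\restriction\R^d\setminus B_{r_n}(0)\|$ emerges directly, while $\|\nabla g_n\|$ is bounded globally and uniformly in $n$ by \eqref{est.2}. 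The invariance of $\Lla$ under $(T-\mu)^{-1}$ then converts $\|\psi\restriction\cdot\|$ into the claimed supremum over unit-norm $\phi\in\Lla$. This duality step, together with the extra integration by parts, is the essential ingredient missing from your argument; without it you cannot attain the sharp rate stated in the theorem. (Your reading of the role of \eqref{ass.w.loc} in Case~I — that it forces $w$ to contribute no commutator error — is correct and matches the paper's computation.)
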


\begin{rem}\label{rem.dec}
The decay rate of $\phi\in \Lla$ can be further estimated as 
\begin{equation}\label{decay.rate.of.phi.Q_0}
\max_{\substack{\phi\in \Lla \\ \|\phi\|=1}}
\big\|\phi\restriction \R^d\backslash B_{r_n}(0)\big\|
\leq \frac{D}{\mathop{\rm ess \, inf}\limits_{|x|\geq r_n}|Q_0(x)|^{\iota}},
\end{equation}
where $D \geq 0$ is independent of $n$ and $\iota =1/2$  if $\theta < \pi/2$  and  $\iota=1$  if $\theta \geq \pi/2$, respectively, \cf~\eqref{phin.psin.conv.sec} and \eqref{phi.psi.cnv.2}.
However, the decay rate of $\phi\in \Lla$ may be much faster than the growth of $|Q_0|$, 
\eg~exponential, \cf~\cite{Agmon-1982-29, Simon-1982-7,Cappiello-2010-111} or \cf~\cite{Sibuya-1975} for complex polynomial potentials.
\end{rem}


\begin{proof}[Proof of Theorem \upshape \ref{thm.conv.rate}]
Let $\mu\in\rho(T)$.
Theorem \ref{thm.norm.res.conv} implies that  $\mu\in\rho(T_n)$ for all sufficiently large~$n$ and $\|(T_n-\mu)^{-1}\chi_{\Omega_n}- (T-\mu)^{-1}\|\to 0$.
The spectral mapping theorem yields $\nu\!:=\!(\lm-\mu)^{-1}\!\in\!\sigma((T-\mu)^{-1})$ and
the eigenvalues $\nu_{j;n}\!:=\!(\lm_{j;n}-\mu)^{-1}\!\in\!\sigma((T_n-\mu)^{-1})\subset\sigma((T_n-\mu)^{-1}\chi_{\Omega_n})$ satisfy $\nu_{j;n} \to \nu$ as $n \to \infty$. 
Now the identity $|\lm-\lm_{j;n}|=|\nu  \nu_{j;n}|^{-1} |\nu-\nu_{j;n}|$ implies that it suffices to study the convergence rate for $\nu_{j;n}$.

By \cite[Thm.\ 2]{Osborn-1975-29}, there exists $C_1\geq 0$, independent of $n$, such that, 
\begin{equation}\label{estimate.osborn}
\bigg|
\nu-\frac 1m \sum_{j=1}^m\nu_{j;n}
\bigg|
\leq C_1 
\left\|
\left((T-\mu)^{-1}-(T_n-\mu)^{-1}\chi_{\Omega_n}\right) \restriction \Lla
\right\|.
\end{equation}
Below we show that there exists $\widetilde C \!>\!0$, independent of $n$, such that, for every $\phi\! \in\! \Lla$, 	
\begin{equation}\label{rate.phi.psi}
\begin{aligned}
\left\|\left((T-\mu)^{-1}-(T_n-\mu)^{-1}\chi_{\Omega_n}\right)\phi\right\|
\leq \widetilde C
\big(&
\big\|\phi\restriction \R^d\backslash B_{r_n}(0)\big\|
\\&
+ \big\|((T-\mu)^{-1} \phi)\restriction \R^d\backslash B_{r_n}(0)\big\|  
\big).
\end{aligned}
\end{equation}
Since $\Lla$ is an invariant subspace of $T$, we have 
\begin{equation}
\begin{aligned}
\!\!\max_{\substack{\phi\in \Lla \\ \|\phi\|=1}} \!
\big\|((T\!-\!\mu)^{-1} \phi)\restriction \R^d\backslash B_{r_n}(0)\big\|  
&\!\leq\!
\|(T-\mu)^{-1}\| \!\!
\max_{\substack{\phi\in \Lla \\ \|\phi\|=1}} \!\!\!\!
\frac{\big\|((T\!-\!\mu)^{-1} \phi)\restriction \R^d\backslash B_{r_n}(0)\big\|  }
{\big\|(T\!-\!\mu)^{-1} \phi\big\|} 
\\
& \!\leq\!
\|(T\!-\!\mu)^{-1}\| \!
\max_{\substack{\psi\in \Lla \\ \|\psi\|=1}} \!
\|\psi\restriction \R^d\backslash B_{r_n}(0)\|,
\end{aligned}
\end{equation}
hence the estimate \eqref{conv.rate.of.lambda} in the claim follows.

To prove \eqref{rate.phi.psi}, let $\{\zeta_n\}_n \subset \CcRdR$ be  such that, with $\widetilde\zeta_n:=1-\zeta_n$, 
\begin{equation}\label{omegan.and.tilde.omegan}
 \begin{aligned}
  & 0\leq \zeta_n \leq 1, \ \ \zeta_n \restriction B_{r_n}(0) =1, \quad \supp \zeta_n \subset B_{r_n+1}(0),  
\\
& \|\nabla\zeta_n \|_{\infty}+\|\Delta\zeta_n \|_{\infty} = \|\nabla\widetilde\zeta_n \|_{\infty}+\|\Delta\widetilde\zeta_n \|_{\infty} \leq C_2
\end{aligned}
\end{equation}
where $C_2>0$ is independent of $n$.
Let $\phi \in \Lla$ and set $\psi:=(T-\mu)^{-1}\phi$. 
First we adapt the approach of \cite{Friedlander-2009-170} or \cite[Prop.\ 5.3]{Krejcirik-2012-24} based on 
\begin{equation}\label{estimate.sect.3}
\|g\| =\sup_{f\neq 0}\frac{|\langle g, f \rangle |}{\|f\|}.
\end{equation}
Let $f\in L^2(\R^d,\C)$, $f \neq 0$. Then, with $\widetilde \chi_{\Omega_n}:=1-\chi_{\Omega_n}$, we write
\begin{equation}\label{eq.est.res.diff.1}
\begin{aligned}
&\langle  \left((T-\mu)^{-1}-(T_n-\mu)^{-1}\chi_{\Omega_n}\right)\phi, f \rangle 
\\ 
&
= \langle (T-\mu)^{-1}\phi, \chi_{\Omega_n} f \rangle 
+ \langle (T-\mu)^{-1}\phi, \widetilde\chi_{\Omega_n} f \rangle 
- \langle (T_n-\mu)^{-1}\chi_{\Omega_n}\phi, \chi_{\Omega_n} f \rangle, 
\end{aligned}
\end{equation}
and the second term satisfies
\begin{equation} \label{august3}
 \left|\langle (T-\mu)^{-1}\phi, \widetilde\chi_{\Omega_n} f \rangle\right|
 =
 \left|
 \langle  \widetilde\chi_{\Omega_n}\psi, f \rangle
 \right|
 \leq 
 \|f\| \|\psi\restriction \R^d\backslash B_{r_n}(0) \|. 
\end{equation}
Since $\mu\in\rho(T_n)$, we have $\lbar\mu\in\rho(T_n^*)$. Define $g_n:=(T_n^*-\lbar\mu)^{-1}\chi_{\Omega_n} f\in\dom(T_n^*)$.
Note that the functions $g_n$ are uniformly bounded, $\|g_n\|\leq \sup_n \|(T_n-\mu)^{-1}\| \,\|f\|$.
Now the remaining terms on the right-hand side of~\eqref{eq.est.res.diff.1} can be written as
%
\begin{equation}\label{eq.est.res.diff.2}
 \begin{aligned}
  &\langle (T-\mu)^{-1}\phi, \chi_{\Omega_n} f \rangle 
  - \langle (T_n-\mu)^{-1}\chi_{\Omega_n}\phi, \chi_{\Omega_n} f \rangle
\\
&=
  \langle \psi, (T_n^*-\lbar\mu)g_n \rangle 
- \langle \chi_{\Omega_n} (T-\mu)\psi, g_n \rangle
\\
&=\langle \zeta_n\psi, (T_n^*-\lbar\mu)g_n \rangle
+ \langle \widetilde\zeta_n\psi, (T_n^*-\lbar\mu)g_n \rangle
\\
&\quad 
- \langle \chi_{\Omega_n} (T-\mu)\psi, \zeta_n g_n \rangle
- \langle \chi_{\Omega_n} (T-\mu)\psi, \widetilde\zeta_n g_n \rangle
\\
&=
 \langle \zeta_n\psi, T_n^*g_n \rangle
-\langle T\psi, \zeta_n g_n \rangle
+\langle \widetilde\zeta_n\psi, \chi_{\Omega_n}f \rangle 
-\langle \chi_{\Omega_n} \phi, \widetilde\zeta_n g_n \rangle.
 \end{aligned}
\end{equation}
The last two terms can be estimated easily,
\begin{equation}\label{estimate.sect.4}
\begin{aligned}
&\left|
  \langle \widetilde\zeta_n\psi, \chi_{\Omega_n}f \rangle
- \langle \chi_{\Omega_n} \phi, \widetilde\zeta_n g_n \rangle
 \right|
\leq \|\widetilde \zeta_n \psi\|  \|f\| + \|\widetilde \zeta_n \phi \| \|g_n\|
\\
& \leq \|f\|\big( \big\|\psi\restriction \R^d\backslash B_{r_n}(0)\big\|+\sup_n \big \|(T_n-\mu)^{-1} \big \| \big\|\phi\restriction \R^d\backslash B_{r_n}(0)\big\|\big).
\end{aligned}
\end{equation}
Below we show, separately for the cases $\theta < \pi/2$ and $\theta \geq \pi/2$, that the first two terms of~\eqref{eq.est.res.diff.2} satisfy
\begin{equation}\label{dif.id.1}
\begin{aligned}
\langle \zeta_n\psi, T_n^*g_n \rangle
-\langle T\psi, \zeta_n g_n \rangle 
=
\langle \psi \Delta\zeta_n, g_n \rangle 
+ 2 \langle \psi \nabla\zeta_n, \nabla g_n \rangle
\end{aligned}
\end{equation}
and there exists $C_3\geq 0$ such that 
\begin{equation}\label{est.2}
\begin{aligned}
 \|\nabla g_n\|_n^2&
\leq
C_3 \|f\|^2.
\end{aligned}
\end{equation}
Then, since $\nabla\zeta_n\restriction B_{r_n}(0)=0$, $\Delta\zeta_n\restriction B_{r_n}(0)=0$, it follows that there is $C_4\geq 0$ with
\begin{equation}\label{estimate.sect.5}
\begin{aligned}
&\left|
 \langle \zeta_n\psi, T_n^*g_n \rangle
 - \langle  T\psi, \zeta_ng_n \rangle
 \right|
\\
& \leq (\|g_n\|_n \|\Delta\zeta_n\|_{\infty} +2\|\nabla g_n\|_n \|\nabla\zeta_n\|_{\infty})\big\|\psi\restriction \R^d\backslash B_{r_n}(0)\big\|
\\
& \leq 
C_4 \|f\|\big\|\psi\restriction \R^d\backslash B_{r_n}(0)\big\|.
\end{aligned}
\end{equation}
Thus summarizing \eqref{august3}--\eqref{estimate.sect.5} 
we obtain \eqref{rate.phi.psi}.

It remains to prove \eqref{dif.id.1} and \eqref{est.2}.
First we study the case $\theta < \pi/2$.
Since $\psi\in\dom(T)\subset \WotRd \cap\dom(q_0)$ and $\zeta_n \in \CcRdR$ with $\supp\zeta_n\subset B_{r_n+1}(0)\subset\Omega_n$, we conclude that $\zeta_n\psi\in\dom(t_n)=\dom(t_n^*)$ and
\begin{equation}
\langle \zeta_n\psi, T_n^*g_n \rangle
=\overline{\langle T_n^*g_n, \zeta_n\psi \rangle}
=\overline{t_n^*(g_n, \zeta_n\psi)}
=t_n(\zeta_n\psi,g_n).
\end{equation}
Moreover, it follows from $g_n\in\dom(T_n^*)\subset \Wotlam\cap\dom(q_0)$ and the properties of $\zeta_n$ that $\zeta_n g_n\in\dom(t)$.
Hence, using $\supp \zeta_n \subset \Omega_n$, assumption \eqref{ass.w.loc}, and integration by parts, we obtain
\begin{equation}\label{estimate.sect.1}
\begin{aligned}
&\langle \zeta_n\psi, T_n^*g_n \rangle
-\langle T\psi, \zeta_n g_n \rangle 
=
t_n(\zeta_n\psi, g_n )-t(\psi, \zeta_n g_n)
\\
&\quad =
\langle \nabla(\zeta_n\psi), \nabla g_n \rangle
+ \int_{\partial\Omega_n^{\rm R}} a_n \zeta_n \psi \, \overline{g_n}  \,\rd \sigma + q_0(\zeta_n\psi, g_n ) + w(\zeta_n\psi,g_n)
\\
& \quad \quad
- \langle \nabla\psi, \nabla(\zeta_ng_n) \rangle
- q_0(\psi, \zeta_n g_n) - w(\psi, \zeta_n g_n)
\\
& \quad = 
\langle \nabla(\zeta_n\psi), \nabla g_n \rangle
- \langle \nabla\psi, \nabla(\zeta_n g_n) \rangle
\\
& \quad=
\langle \psi \nabla\zeta_n, \nabla g_n \rangle 
- \langle \nabla\psi, g_n \nabla\zeta_n \rangle 
\\
&\quad  =\langle \psi \Delta\zeta_n, g_n \rangle 
+ 2 \langle \psi \nabla\zeta_n, \nabla g_n \rangle,
\end{aligned}
\end{equation}
which proves \eqref{dif.id.1}.
The estimate~\eqref{eq.retn.vs.nablaf} implies that there exist $C_5, C_6\geq 0$ with
\begin{equation}
 \begin{aligned}
  |t_n^*[g_n]|=|t_n[g_n]|\geq \re t_n[g_n]\geq C_5 \left(\|\nabla g_n\|_n^2+\re q_0[g_n]\right)-C_6 \|g_n\|_n^2.
 \end{aligned}
\end{equation}
Since $\re q_0[g_n]\geq 0$ and $T_n^*g_n=\overline{\mu} g_n+\chi_{\Omega_n} f$, there exists $C_3\geq 0$ such that
\begin{equation}\label{estimate.sect.2}
\begin{aligned}
 \|\nabla g_n\|_n^2
 \leq 
\frac{1}{C_5}\left(|t_n^*[g_n]|+C_6 \|g_n\|_n^2\right)
=\frac{1}{C_5}\left(|\langle T_n^*g_n, g_n\rangle_n|+C_6 \|g_n\|_n^2\right)
\leq C_3 \|f\|^2.
\end{aligned}
\end{equation}

For $\theta \geq \pi/2$, we first note that $T_n^*=-(\Delta_n^{\rm DR})^*+(Q_0-U+W)^*$ where the adjoint of the potential is simply its complex conjugate, \cf~the proof of Theorem~\ref{thm.norm.res.conv} for details.
Hence, with $\zeta_n \psi \in \dom(s_{0,n})= \dom(s_{0,n}^*)$ and integrating by parts in the last step, we get
\begin{equation}\label{estimate.nonsect}
\begin{aligned}
\langle \zeta_n\psi, T_n^*g_n \rangle
-\langle T\psi, \zeta_n g_n \rangle 
&=
\langle \zeta_n\psi, -(\Delta_n^{\rm DR})^*g_n \rangle 
+ \langle (Q_0-U+W)\zeta_n\psi, g_n \rangle 
\\
&\quad
-\langle -\Delta\psi, \zeta_n g_n \rangle - 
\langle (Q_0-U+W)\zeta_n\psi, g_n \rangle 
\\
& =\langle \psi \Delta\zeta_n, g_n \rangle 
+ 2 \langle \psi \nabla\zeta_n, \nabla g_n \rangle.
\end{aligned}
\end{equation}
Finally, adapting \eqref{nabla.n.est} and \eqref{Tn.graph.norm} for $T_n^*$, we see that there are $C_7, C_3\geq 0$ with
\begin{align*}\label{estimate.nonsect.2}
 \|\nabla g_n\|_n^2&
\leq 
 C_7 (\|T_n^*g_n\|^2 + \|g_n\|^2)
\leq
C_3 \|f\|^2.
\qedhere
\end{align*}

\end{proof}


To conclude this section, we study the convergence properties of the pseudospectra of the operators $T_n$; here we use the following definition,
\cf~\cite{Trefethen-2005} for an overview.
\begin{definition}
Let $\eps>0$. The \emph{$\eps$-pseudospectrum} $\sigma_\eps(A)$ of a closed operator $A$ is 
\begin{equation}
 \sigma_{\eps}(A):=\sigma(A) \cup \left\{\lm\in\rho(A):\,\|(A-\lm)^{-1}\|>\frac{1}{\eps}\right\}.
\end{equation}
\end{definition}
To study convergence of the sequence of $\sigma_{\varepsilon}(T_n)$, $n\in\N$, we need a suitable metric for closed unbounded subsets of $\C$.
We use convergence in \emph{Attouch-Wets metric}  $d_{\rm AW}$ which is a generalization of convergence in Hausdorff metric for unbounded sets,
\cf~\cite[Chap.\ 3]{Beer-1993-268} for details and further discussions.
We refrain from giving the definition of  $d_{\rm AW}$ here since we only use
the following, equivalent, characterization,  \cf~\cite[Cor.\ 3.1.8]{Beer-1993-268}. 

Let $\Lambda, \Lambda_n  \subset \C$ be closed non-empty subsets. Then
the sequence $\{\Lambda_n\}_n$ converges to $\Lambda$ in Attouch-Wets metric,
%
$d_{\rm AW}(\Lambda_n,\Lambda ) \to 0$, $n \to \infty$,
%
if and only if for all closed balls $\overline{B_\rho(0)}$, $\rho >0$,
\begin{equation}\label{dAW.crit}
\max \left\{ 
\sup_{w \in \Lambda_n \cap \overline{B_\rho(0)}} \dist(w, \Lambda ), 
\sup_{z \in \Lambda \cap \overline{B_\rho(0)} } \dist(z, \Lambda_n )  \right\} \to 0, \quad n \to \infty.
\end{equation}
%

\begin{theorem}\label{thm.pseudo}
Let Assumption~\ref{ass.lam.n} be satisfied and assume that
\begin{enumerate}[\upshape I.]
	\item in the case of semi-angle $\theta < \pi/2$, Assumption~\ref{ass.q.sec} holds and $T$, $T_n,$ $n\in \N$, are the operators defined in Propositions~{\rm\ref{prop.T.sec.def},~\ref{prop.Tn.def.sec}}, respectively;
	\item in the case of semi-angle $\theta \geq \pi/2$, Assumption~\ref{ass.q.non-sec} holds with $\bU<1$ and  $T$,~$T_n,$ $n\in \N$, are the operators defined in Propositions~{\rm\ref{prop.T.def},~\ref{prop.comp.res}}, respectively. 
\end{enumerate}%
\noindent
Then, for any \vspace{-1mm} $\eps>0$,
\begin{equation}
d_{\rm AW}\big( \hspace{0.3mm} \overline{\sigma_{\varepsilon}(T_n)},\overline{\sigma_{\varepsilon}(T)} \hspace{0.5mm} \big) \to 0, \quad n \to \infty. 
\end{equation}	

\end{theorem}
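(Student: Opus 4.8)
The plan is to reduce the statement to the convergence of reciprocal resolvent‑norm functions and to exploit that, for operators with compact resolvent on a Hilbert space, the \emph{closed} $\eps$-pseudospectrum is exactly a sublevel set of such a function. Fix $\eps>0$ and, for any closed operator $A$ with $\rho(A)\neq\emptyset$, put $g_A(\lm):=\|(A-\lm)^{-1}\|^{-1}$ for $\lm\in\rho(A)$ and $g_A(\lm):=0$ for $\lm\in\sigma(A)$. Then $g_A$ is continuous on $\C$ (continuous on $\rho(A)$, and $0\le g_A(\lm)\le\dist(\lm,\sigma(A))$), and by the definition of the pseudospectrum $\sigma_\eps(A)=\{\lm\in\C:\,g_A(\lm)<\eps\}$. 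Write $g:=g_T$, $g_n:=g_{T_n}$; since $\|(T_n-\lm)^{-1}\chi_{\Omega_n}\|_{L^2(\R^d,\C)\to L^2(\R^d,\C)}=\|(T_n-\lm)^{-1}\|_{L^2(\Omega_n,\C)}$, we have $g_n(\lm)=\|(T_n-\lm)^{-1}\chi_{\Omega_n}\|^{-1}$ for $\lm\in\rho(T_n)$. \emph{Step 1} is to show $\overline{\sigma_\eps(T)}=\{g\le\eps\}$. The inclusion ``$\subseteq$'' is immediate from continuity of $g$; for ``$\supseteq$'' it suffices to prove $\{g=\eps\}\subseteq\overline{\{g<\eps\}}$. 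Because $T$ has compact resolvent, $\sigma(T)$ is discrete and $\rho(T)$ is connected; moreover $\|(T-\lm)^{-1}\|\to0$ along the sector $\mathcal R(b')$ of Proposition~\ref{prop.T.def}~iv) in case~II, resp.\ along any ray leaving $\overline{\Num(T)}$ in case~I. The scalar functions $\lm\mapsto\langle(T-\lm)^{-1}u,v\rangle$ are holomorphic on $\rho(T)$, hence $\lm\mapsto\|(T-\lm)^{-1}\|$ is subharmonic there; not being constant (it tends to $0$), it has no local maximum in the connected open set $\rho(T)$ by the maximum principle. Therefore, if $g(\lm_0)=\eps$ — which forces $\lm_0\in\rho(T)$ and $\|(T-\lm_0)^{-1}\|=1/\eps$ — every neighbourhood of $\lm_0$ contains a point with strictly larger resolvent norm, i.e.\ with $g<\eps$, so $\lm_0\in\overline{\{g<\eps\}}$. (This identity is needed only for $T$.)

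\emph{Step 2} is to record the convergence $g_n\to g$. For $\lm\in\rho(T)$, Theorem~\ref{thm.norm.res.conv} gives $\lm\in\rho(T_n)$ for $n$ large and $g_n(\lm)\to g(\lm)>0$, and by Remark~\ref{rem.loc.unif} this convergence is locally uniform on $\rho(T)$. For $\lm\in\sigma(T)$ (an eigenvalue, $T$ having compact resolvent), Theorem~\ref{thm.conv.spectrum}~i) yields eigenvalues of $T_n$ converging to $\lm$, so $g_n(\lm)\le\dist(\lm,\sigma(T_n))\to0=g(\lm)$. In particular $g_n\to g$ pointwise on $\C$, and $\limsup_{n}g_n(\lm_n)\le g(\lm)$ whenever $\lm_n\to\lm$ (trivially if $\lm\in\sigma(T)$, since then $g(\lm)=0\le g_n(\lm_n)$ is not what we use — rather $g(\lm)=0$ and we use the local uniformity only on $\rho(T)$; for $\lm\in\rho(T)$ the local uniform convergence gives $g_n(\lm_n)\to g(\lm)$).

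\emph{Step 3}: fix $\rho>0$, set $B:=\overline{B_\rho(0)}$, and verify the two suprema in~\eqref{dAW.crit} tend to $0$ by contradiction. If $\sup_{w\in\overline{\sigma_\eps(T_n)}\cap B}\dist(w,\overline{\sigma_\eps(T)})\not\to0$, there are $\delta>0$, an infinite $I\subset\N$ and $w_n\in\overline{\sigma_\eps(T_n)}\cap B$ ($n\in I$) with $\dist(w_n,\overline{\sigma_\eps(T)})\ge\delta$; pick $w_n'\in\sigma_\eps(T_n)$ with $|w_n-w_n'|<1/n$ (so $g_n(w_n')<\eps$) and pass to a subsequence with $w_n'\to w_\ast$. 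By Step~2 and continuity of $g$ one gets $g(w_\ast)\le\eps$: if $w_\ast\in\sigma(T)$ this is trivial, and if $w_\ast\in\rho(T)$ then $g_n(w_n')\to g(w_\ast)$ by the local uniformity. Hence $w_\ast\in\{g\le\eps\}=\overline{\sigma_\eps(T)}$ by Step~1, so $\dist(w_n,\overline{\sigma_\eps(T)})\le|w_n-w_\ast|\to0$, a contradiction. Symmetrically, if $\sup_{z\in\overline{\sigma_\eps(T)}\cap B}\dist(z,\overline{\sigma_\eps(T_n)})\not\to0$, there are $\delta>0$ and $z_n\in\overline{\sigma_\eps(T)}\cap B$ with $\dist(z_n,\overline{\sigma_\eps(T_n)})\ge\delta$; along a subsequence $z_n\to z_\ast$, and by Step~1 $g(z_n)\le\eps$, so $g(z_\ast)\le\eps$, i.e.\ $z_\ast\in\overline{\{g<\eps\}}$ by Step~1; choose $z''$ with $g(z'')<\eps$ and $|z''-z_\ast|<\delta/2$. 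By Step~2 (pointwise at the single point $z''$), $g_n(z'')\to g(z'')<\eps$, so $z''\in\sigma_\eps(T_n)\subseteq\overline{\sigma_\eps(T_n)}$ for $n$ large, whence $\dist(z_n,\overline{\sigma_\eps(T_n)})\le|z_n-z''|<\delta$ for $n$ large — a contradiction. Thus $d_{\rm AW}\big(\overline{\sigma_\eps(T_n)},\overline{\sigma_\eps(T)}\big)\to0$.

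The main obstacle is Step~1: it is precisely the identity $\overline{\sigma_\eps(T)}=\{g\le\eps\}$ that renders the limit points on the critical level $\{g=\eps\}$ harmless, and it hinges on ruling out local maxima of the resolvent norm — that is, on subharmonicity together with compactness of the resolvent (so that $\rho(T)$ is connected) and the decay of $\|(T-\lm)^{-1}\|$ at infinity supplied by Propositions~\ref{prop.T.def} and~\ref{prop.comp.res}. Everything else is soft and follows from the locally uniform generalized norm resolvent convergence (Theorem~\ref{thm.norm.res.conv}, Remark~\ref{rem.loc.unif}) and the spectral inclusion (Theorem~\ref{thm.conv.spectrum}).
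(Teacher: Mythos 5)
Your overall strategy matches the paper's: reduce the Attouch--Wets convergence of pseudospectra to (a) the fact that the resolvent norm of $T$ is not locally constant, and (b) the locally uniform generalized norm resolvent convergence of Theorem~\ref{thm.norm.res.conv} / Remark~\ref{rem.loc.unif} together with spectral inclusion from Theorem~\ref{thm.conv.spectrum}. Steps~2 and~3 of your argument are sound and are essentially the content the paper outsources to the adaptation of \cite[Thm.~2.1, Lem.~4.3]{Boegli-2014-80}.

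The gap is in Step~1, in the sentence ``not being constant (it tends to $0$), it has no local maximum in the connected open set $\rho(T)$ by the maximum principle.'' The strong maximum principle for subharmonic functions rules out \emph{global} maxima on a connected open set; applied to a \emph{local} maximum $\lm_0$ of $u(\lm):=\|(T-\lm)^{-1}\|$ it only yields that $u$ is constant on a small ball around $\lm_0$, and this local plateau need not propagate: a continuous subharmonic function can very well be locally constant on a proper open subset of a connected domain and non-constant globally (e.g.\ $u=\max(c,h)$ with $h$ harmonic and non-constant). So the decay of $\|(T-\lm)^{-1}\|$ in the sector $\mathcal R(b')$ does not, by subharmonicity alone, preclude a plateau at a higher level. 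This is exactly the non-trivial ingredient the paper supplies by invoking \cite[Thm.~2.2]{Davies-2014}, which states that the resolvent norm of an operator with compact resolvent in a Hilbert space is never constant on an open subset of the resolvent set. Your argument can be repaired by exploiting compactness of $(T-\lm_0)^{-1}$ more explicitly: pick a unit vector $u_0$ with $\|(T-\lm_0)^{-1}u_0\|=\|(T-\lm_0)^{-1}\|$, set $v_0:=(T-\lm_0)^{-1}u_0/\|(T-\lm_0)^{-1}u_0\|$, and apply the \emph{scalar} maximum modulus principle to the holomorphic function $f(\lm):=\langle(T-\lm)^{-1}u_0,v_0\rangle$ on $\rho(T)$. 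If $\lm_0$ were a local maximum of $\|(T-\lm)^{-1}\|$, then $|f|$ would attain a local maximum at $\lm_0$, forcing $f$ to be constant on the connected set $\rho(T)$ and hence $\|(T-\lm)^{-1}\|\geq|f(\lm)|=\|(T-\lm_0)^{-1}\|$ everywhere on $\rho(T)$, contradicting the decay in $\mathcal R(b')$ (Proposition~\ref{prop.T.def}~iv)) resp.\ along rays leaving $\overline{\Num(T)}$. With that correction, the remainder of your proof is complete and follows the same route as the paper.
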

\begin{proof}
Since $T$ has compact resolvent, \cf~Proposition \ref{prop.T.sec.def} (for semi-angle $\theta < \pi/2$) and Proposition~\ref{prop.T.def} (for semi-angle $\theta \geq \pi/2$), its resolvent norm is not constant on any open subset of $\rho(T)$, \cf~\cite[Thm.\ 2.2]{Davies-2014}. 
Then the claim follows from the generalized norm resolvent convergence of $T_n$ to $T$, similarly as in \cite[Thm.\ 2.1]{Boegli-2014-80}.
In fact, without assuming that condition (ii) in \cite[Thm.\ 2.1]{Boegli-2014-80} holds, the claim of \cite[Lem.\ 4.3]{Boegli-2014-80} can be modified as follows. 
For every $\delta>0$ and $\mathcal K \subset \C$ compact, there exists $n_0 \in \N$ such that 
\begin{equation}\label{eq.delta.nbh}
\overline{\sigma_{\varepsilon}(T_n)} \cap \mathcal K \subset B_\delta(\overline{\sigma_{\varepsilon}(T)}), \quad
\overline{\sigma_{\varepsilon}(T)} \cap \mathcal K \subset B_\delta(\overline{\sigma_{\varepsilon}(T_n)}), \quad n\geq n_0,
\end{equation}
where $B_\delta(\Lambda)$ denotes the open $\delta$-neighbourhood of the set $\Lambda$ (called $\omega_\delta(\Lambda)$ in \cite{Boegli-2014-80}). 
Now~\eqref{eq.delta.nbh} yields the convergence~\eqref{dAW.crit} which, in turn, implies convergence in Attouch-Wets metric.
\end{proof}

\section{Remarks on exterior domains}
\label{sec.ext.dom}
In this section, we extend our results  to the situation of a Schr\"odinger operator $T_{\Omega}$ acting in $L^2(\Omega,\C)$ where $\Omega \subset \Rd$ is an exterior domain, \ie~$\Rd\setminus\Omega$ is compact.
We focus on dimension $d\geq 2$ since in $d=1$ an exterior domain is not connected, although we can also treat the case when $\Omega \subset \R$ is a half-line. 
The generalization is almost straightforward and the proofs are analogous. 
Therefore we only mention major differences and additional ingredients. 

For an exterior domain $\Omega$ we define the corresponding operator $T_\Omega$ in an analogous way as in Section \ref{sec.op.Rd}.
Since $\partial \Omega$ is non-empty, we now have to impose boundary conditions ensuring that $T_\Omega$ has non-empty resolvent set.
While for the $m$-sectorial case $\theta < \pi/2$ we can allow a combination of Dirichlet and Robin conditions, determined by a function $a_{\rm in}: \partial \Omega^{\rm R} \to \C$,
for the case of semi-angle $\theta \ge \pi/2$ only Dirichlet conditions are allowed; this restriction is due to Kato's Theorem~\cite[Thm.\ VII.2.5]{edmundsevans} which we use to define $T_\Omega$ as a perturbation of an $m$-accretive operator.

\begin{ass}\label{ass.ext}
Let $d\geq 2$ and let $\Omega \subset \Rd$ be an exterior domain, \ie~$\R^d\backslash\Omega\neq\emptyset$ is compact, 
\begin{equation}\label{eq.splitting.Omega}
\partial \Omega = \partial \Omega^{\rm D} \, \dot\cup \,  \partial \Omega^{\rm R}
\end{equation} 
with $\partial \Omega^{\rm D}$ 
closed and with $\partial\Omega^{\rm R}=\emptyset$ if $\theta\geq \pi/2$. 
If $\theta< \pi/2$ and $\partial \Omega^{\rm R} \neq \emptyset$, we assume 
\begin{enumerate}[label=(\ref{ass.ext}.\rm{\roman{*})}] 
\item \emph{regularity of $\partial\Omega$}: 
$\Omega$ is Lipschitz; 
\item \emph{control of Robin boundary term}:
$a_{\rm in} \in L^{\infty}(\partial \Omega^{\rm R},\C)$.
\end{enumerate}
\end{ass}

The main results in Sections~\ref{sec.conv} and~\ref{sec.sp.conv} generalize in a straightforward way to the situation of an exterior domain if analogues of the claims in Sections \ref{sec.op.Rd} and~\ref{sec.Tn} are available. In Subsections \ref{subsec.sec.ext}, \ref{subsec.nonsec.ext} below, we provide proof ideas of the latter and indicate additional modifications in order to prove the following theorem. 
Here, by $\|\cdot\|_{\Omega}$ we denote the norm of $L^2(\Omega,\C)$.

\begin{assOm}
	\label{ass.q.sec.Om}
	The sesquilinear form $q$ decomposes as $q=q_0 + w$ where $q_0$ and $w$ have the following properties.
	The form $q_0$ is generated by $Q_0 \in L^{1}_{\rm \loc}(\overline \Omega,\C)$, \ie~
	\begin{equation}\label{q0.def.Om}
	q_0[\cdot]:= \int_{\Omega} Q_0 |\cdot|^2\,\rd x, 
	\quad \dom(q_0):=\left\{ f\in L^2(\Omega,\C) \, : \, Q_0 |f|^2\in L^1(\Omega,\C) \right\},
	\end{equation}
	such that
	\begin{enumerate}[label=(\ref{ass.q.sec}.\rm{\roman{*}.$\Omega$)},labelsep=0.15in,leftmargin=0.55in]
		\item \label{ass.q.sec.i.Om}
		\emph{sectoriality of $\,Q_0$ with semi-angle $\theta \!<\! \pi/2$}:
		there exist $c_0\!>\!0$ and $\theta \!\in\! [0,\pi/2)$ with
		\begin{equation}\label{Q0.sec.Om}
		\begin{aligned}
		\re Q_0 \geq c_0,
		\quad 
		|\im Q_0| \leq \tan \theta \, \re Q_0
		;
		\end{aligned}
		\end{equation}
		\item \label{ass.q.sec.unbdd.Om} \emph{unboundedness of $\,Q_0$ at infinity}:
		\begin{equation}
		|Q_0(x)| \to \infty \ \ {\rm as} \ \ |x| \to \infty.
		\end{equation}
	\end{enumerate}
	For the form $w$, there  exist
	$R>r>0$ and $\zeta \in \CcRdR$ with
	\begin{equation}
	\Rd\setminus \Omega \subset B_r(0), \quad \supp \zeta \subset B_R(0), \quad 0 \leq \zeta \leq 1, \quad \zeta \restriction B_r(0)=1,
	\end{equation}
	sesquilinear forms $w_1\!$, $w_2$ with $\overline{\DD_{\Omega,R}}^{\|\cdot\|_{W^{\!1\!,2}(\Omega,\C)}} \!\!\!\subset\!\! \dom (w_1)$, $\dom(w_2)\!\!=\!\!L^2(\Omega,\!\C)$
	where 
	$\DD_{\Omega,R}:= \{ f \!\in\! C^{\infty}(B_R(0) \cap \Omega,\C) : \exists f_0 \in C_0^\infty(B_R(0),\C), f \!=\! f_0 \restriction \Omega, 
	\, \supp f \cap \partial \Omega^{\rm D} \!=\! \emptyset\}$~with
	\begin{equation}\label{eq.splitting.w.Om}
	 \forall\, f \in \dom(w) : \quad
\sqrt{\zeta}f\in \overline{\DD_{\Omega,R}}^{\|\cdot\|_{W^{1,2}(\Omega,\C)}}, 
\quad 
 w[f]=w_1[\sqrt{\zeta}f]+w_2[\sqrt{1-\zeta}f],
	\end{equation}
	such that
	\begin{enumerate}[label=(\ref{ass.q.sec}.\rm{\roman{*}.$\Omega$)},labelsep=0.15in,leftmargin=0.55in]
		\setcounter{enumi}{2}
		\item\label{ass.q.sec.w.Om}
		\emph{$\|\nabla\cdot\|_\Omega^2$-boundedness of $w_1$ in $L^2(B_R(0) \cap \Omega,\C)$}: 
		there are  $a_w \!\geq\! 0$, $b_w \!\in\! [0,1)$ so that, for every $f \in \overline{\DD_{\Omega,R}}^{\|\cdot\|_{W^{1,2}(\Omega,\C)}}$,
		\begin{equation}
		\begin{aligned}
		\quad |w_1[f] | \leq  a_w \|f \|_\Omega^2 + b_w \| \nabla f \|_\Omega^2;
		\end{aligned}
		\end{equation}
		\item\label{ass.q.sec.w.2.Om} \emph{boundedness of $w_2$ outside $B_r(0)$}:
		there exists $M_w \geq 0$ so that, for every $f\in L^2(\Omega,\C)$,
		\begin{equation}\label{ass.w.eq.Om}
		|w_2[(1-\chi_{r,\Omega}) f]| \leq M_w \|f\|_\Omega^2,
		\end{equation}
		where $\chi_{r,\Omega}$ is the characteristic function of $B_r(0) \cap \Omega$.
	\end{enumerate}
\end{assOm}

\begin{assOm}
	\label{ass.q.non-sec.Om}
	The function $Q \in L^2_{\rm loc}(\Omega,\C)$ decomposes as
	\begin{equation}
	Q = Q_0 - U + W
	\end{equation}
	where $\re Q_0 \geq 0$, $U \geq 0$, $U \re Q_0 =0$, $W \in L^2_{\rm loc}(\overline \Omega,\C)$, and the following hold. 
	\begin{enumerate}[label=(\ref{ass.q.non-sec}.\rm{\roman{*}.$\Omega$)},labelsep=0.15in,leftmargin=0.55in]
		\item \label{ass.q.non-sec.reg.Om}
		\emph{regularity of $Q_0$ and $U$}:
		$Q_0 \!\in\! W^{1,\infty}_{\rm \loc}(\overline{\Omega},\C)$, $U \!\in\! L^{\infty}_{\rm \loc}(\overline \Omega,\R)$,
		and there exist $\an$, $\bn$, $\aU$, $\bU$  $\geq 0$ such that 
		\begin{equation}\label{ass.q.non-sec.rb.Om} 
		\begin{aligned}
		|\nabla Q_0 |^2 \leq \an + \bn |Q_0|^2,  \quad 
		U^2 \leq \aU  +  \bU |\im Q_0|^2;
		\end{aligned}
		\end{equation}
		\item \label{ass.q.non-sec.unbdd.Om} 
		\emph{unboundedness of $Q_0$ at infinity}:
		\begin{equation}
		|Q_0(x)| \rightarrow \infty \ \  {\rm as} \ \  \ {|x| \rightarrow \infty}.
		\end{equation}
	\end{enumerate}
	There exist $R>r>0$ such that $\Rd\setminus \Omega \subset B_r(0)$ and
	\begin{enumerate}[label=(\ref{ass.q.non-sec}.\rm{\roman{*}.$\Omega$)},labelsep=0.15in,leftmargin=0.55in]
		\setcounter{enumi}{2}
		\item\label{ass.q.non-sec.W.Om}
		\emph{$\Delta$-boundedness of \,$W$ in $L^2(B_R(0) \cap \Omega,\C)$}: 
		there exist $a_W \geq 0$, $b_W \in [0,1)$ such that, for every $f \in \{ f \!\in\! W_0^{1,2}(B_R(0)\cap \Omega,\C) : \Delta f \in L^2(B_R(0) \cap \Omega,\C) \}$,
		\begin{equation}
		\|W f \|_\Omega^2 \leq a_W \|f \|_\Omega^2 + b_W \|\Delta f\|_\Omega^2;   
		\end{equation}
		
		\item\label{ass.q.non-sec.W.2.Om}
		\emph{boundedness of \,$W$ outside $B_r(0)$}: 
		there exists $M_W \geq 0$ such that
		\begin{equation}
		\|(1-\chi_{r,\Omega}) W \|_{\infty} \leq M_{W},
		\end{equation}
		where $\chi_{r,\Omega}$ is the characteristic function of $B_r(0) \cap \Omega$.
	\end{enumerate}
	
\end{assOm}

\begin{theorem}\label{thm.ext}
	Let Assumptions~\ref{ass.ext} and \ref{ass.lam.n} hold with $\Rd$ replaced by $\Omega$ and, if applicable, 
	the regularity assumption {\rm\ref{ass.lam.n.div}} with $\partial \Omega_n$ replaced by the smaller set 
	$\partial \Omega_n \setminus \partial \Omega$. 
	Further assume that 
	\begin{enumerate}[\upshape I.]
		\item  for semi-angle $\theta \!<\! \pi/2$, Assumption~\ref{ass.q.sec.Om} holds;
		\item  for semi-angle $\theta \!\geq\! \pi/2$, Assumption~\ref{ass.q.non-sec.Om} holds with $b_U<1$. 
			\end{enumerate}
	Then the statements of Theorems~{\rm\ref{thm.norm.res.conv},~\ref{thm.conv.spectrum},~\ref{thm.conv.rate}}, and~{\rm\ref{thm.pseudo}} continue to hold with $T$, $T_n$ replaced by $T_\Omega$, $(T_\Omega)_n$, respectively .
\end{theorem}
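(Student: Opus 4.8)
The plan is to reduce Theorem~\ref{thm.ext} to re-establishing, for an exterior domain, the operator-theoretic foundations of Sections~\ref{sec.op.Rd} and~\ref{sec.Tn}: once analogues of Propositions~\ref{prop.T.sec.def}, \ref{prop.T.def}, \ref{prop.Tn.def.sec}, \ref{prop.comp.res} and of the technical Lemmas~\ref{lem.tau0.def}, \ref{lem.W.Delta}, \ref{lem.norm.eq}, \ref{lem.T0n.g.norm}, \ref{lem.T0n.W} are available with $\R^d$, $\Omega_n$ replaced by $\Omega$, $\Omega_n$, the proofs of Theorems~\ref{thm.norm.res.conv}--\ref{thm.pseudo} carry over with only minor modifications. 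The structural fact that makes this possible is that $\R^d\setminus\Omega\subset B_r(0)\subset B_{r_1}(0)\subset\Omega_n$ for all $n$, \cf~Assumptions~\ref{ass.q.sec.Om}/\ref{ass.q.non-sec.Om} and~\ref{ass.lam.n.ex}: domain truncation and every cut-off argument in Sections~\ref{sec.conv} and~\ref{sec.sp.conv} take place in the region $\{|x|\ge r_n\}$, which is disjoint from $\partial\Omega$, so the inner boundary conditions never interfere with any limiting argument.

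For semi-angle $\theta<\pi/2$ I would define $T_\Omega$ via the form $t_\Omega:=\|\nabla\cdot\|_\Omega^2+q_0+w$ with form domain $\overline{\DD_\Omega}^{\,\|\cdot\|_{W^{1,2}(\Omega,\C)}}\cap\dom(q_0)$, where $\DD_\Omega$ is the global analogue of $\DD_n$ in~\eqref{Dn.def} encoding Dirichlet data on $\partial\Omega^{\rm D}$. Following the proof of Proposition~\ref{prop.T.sec.def}: $\|\nabla\cdot\|_\Omega^2+q_0$ is a sum of two closed sectorial forms; the Robin boundary term on $\partial\Omega^{\rm R}$ is $\|\nabla\cdot\|_\Omega^2$-form-bounded with relative bound $0$ by the trace embedding~\eqref{emb.est.n} on $\Omega$ (here Assumption~\ref{ass.ext} and $a_{\rm in}\in L^\infty$ enter); and $w$ is controlled exactly as in~\eqref{w.est} via~\ref{ass.q.sec.w.Om}, \ref{ass.q.sec.w.2.Om}. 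Compact resolvent follows by a Rellich-type argument as in Proposition~\ref{prop.T.sec.def}~ii): on bounded pieces of $\Omega$ the Rellich--Kondrachov theorem applies (Lipschitz boundary, or extension by zero across the Dirichlet part), and the growth $\re Q_0\ge\cos\theta\,|Q_0|\to\infty$ controls the tail. The operators $(T_\Omega)_n$ on $\Omega_n$ are defined as in Proposition~\ref{prop.Tn.def.sec}, imposing on $\partial\Omega$ the conditions inherited from $T_\Omega$ and on the artificial boundary $\partial\Omega_n\setminus\partial\Omega$ those from Assumption~\ref{ass.lam.n}; uniform quasi-sectoriality is obtained as in Proposition~\ref{prop.Tn.def.sec}~ii), the only additional boundary term being the fixed one on $\partial\Omega^{\rm R}$, which is controlled uniformly in $n$.

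For semi-angle $\theta\ge\pi/2$ only Dirichlet conditions on $\partial\Omega$ occur, and I would set $(T_\Omega)_{\min}:=-\Delta+Q$ on $C_0^{\infty}(\Omega,\C)$. Lemmas~\ref{lem.tau0.def}, \ref{lem.W.Delta}, \ref{lem.norm.eq} transfer essentially verbatim: every integration by parts there acts on functions with compact support in the open set $\Omega$, so no boundary contributions arise, and only $\re Q_0\ge0$, the bounds~\eqref{ass.q.non-sec.rb.Om}, and the $\Delta$-boundedness of $W$ on $B_R(0)\cap\Omega$ from~\ref{ass.q.non-sec.W.Om} are used (Dirichlet data make $B_R(0)\cap\Omega$ the relevant region). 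The $m$-accretive Dirichlet realization $T_{0,\Omega}=-\Delta+Q_0$ exists by Kato's Theorem~\cite[Thm.\ VII.2.6,\ Cor.\ VII.2.7]{edmundsevans} applied on $\Omega$; closability, the domain description, the graph-norm equivalence~\eqref{norm.equiv.2}, compactness of the resolvent, and the resolvent bound~\eqref{Res.dec} on the sector~\eqref{hyperbolaset} then follow exactly as in Proposition~\ref{prop.T.def}. The approximating operators $(T_\Omega)_n=S_{0,n}+Q_0-U+W$, with Dirichlet data on $\partial\Omega$ and Dirichlet/Robin data on $\partial\Omega_n\setminus\partial\Omega$, are treated as in Proposition~\ref{prop.comp.res} together with the analogues of Lemmas~\ref{lem.T0n.g.norm}, \ref{lem.T0n.W}: the inner Dirichlet boundary adds no terms, the artificial-boundary terms are estimated as before, giving the uniform graph-norm equivalence~\eqref{Tn.graph.norm} and, for $b_U<1$, the uniform resolvent bound~\eqref{Tn.res.bound} on~\eqref{hyperbolaset2}.

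With these ingredients the proofs of Lemmas~\ref{lem.s.res.conv}, \ref{lem.discr.comp} and of Theorems~\ref{thm.norm.res.conv}--\ref{thm.pseudo} go through: the cut-offs $\zeta_n$ of~\eqref{omegan.def} and~\eqref{omegan.and.tilde.omegan} are supported in $B_{r_n+1}(0)$, hence equal $1$ on a neighbourhood of $\R^d\setminus\Omega$, so $\zeta_n f$ always satisfies the same conditions on $\partial\Omega$ as $f$; in Case~II the common-core argument uses $C_0^{\infty}(\Omega,\C)$ in place of $C_0^{\infty}(\R^d,\C)$; and in the proof of Theorem~\ref{thm.conv.rate} the integration-by-parts identities~\eqref{estimate.sect.1} and~\eqref{estimate.nonsect} pick up a boundary integral over $\partial\Omega^{\rm R}$ which, since $\zeta_n\equiv1$ there, appears identically in $t_n(\zeta_n\psi,g_n)$ and in $t(\psi,\zeta_ng_n)$ and hence cancels, leaving~\eqref{dif.id.1} intact. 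I expect the one point needing genuine care, rather than bookkeeping, to be the compactness of the resolvents on the unbounded exterior domain: one must verify that the Rellich-type arguments of Propositions~\ref{prop.T.sec.def}~ii) and~\ref{prop.T.def}~iii) still yield a compact embedding in the presence of the (Lipschitz, in the Robin case) inner boundary $\partial\Omega$, which is exactly the role played by Assumption~\ref{ass.ext} and the extension-by-zero convention.
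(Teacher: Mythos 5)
Your treatment of Case~I (semi-angle $\theta<\pi/2$) is essentially the paper's: the form $t_\Omega$, the trace-embedding control of the Robin boundary term, the compact-embedding argument combining Rellich--Kondrachov on bounded pieces with the tail estimate from $\re Q_0\to\infty$, and the observation that all cut-offs live in $B_{r_n+1}(0)$ away from $\partial\Omega$ so that the limiting arguments of Sections~\ref{sec.conv}--\ref{sec.sp.conv} survive unchanged. (Small slip: you omit the term $\int_{\partial\Omega^{\rm R}}a_{\rm in}|\cdot|^2\,\rd\sigma$ when writing down $t_\Omega$, though you clearly intend it to be there since you discuss its relative form-bound in the next sentence.)

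Case~II (semi-angle $\theta\ge\pi/2$) contains a genuine gap. You propose to set $(T_\Omega)_{\min}:=-\Delta+Q$ on $C_0^\infty(\Omega,\C)$ and claim that Lemmas~\ref{lem.tau0.def}--\ref{lem.norm.eq} and the domain description ``follow exactly as in Proposition~\ref{prop.T.def}.'' The graph-norm inequalities do transfer to $C_0^\infty(\Omega,\C)$, but the closure of $-\Delta+Q_0$ on $C_0^\infty(\Omega,\C)$ in the graph norm is the \emph{minimal} operator, whose domain near $\partial\Omega$ is contained in $W^{2,2}_0$, i.e.\ the full first-order jet vanishes on $\partial\Omega$. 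That is strictly smaller than the Dirichlet realization
$\dom(T_\Omega)=\{f\in W^{1,2}_0(\Omega,\C):(-\Delta+Q_0)f\in L^2(\Omega,\C)\}$,
whose elements may have nonzero normal derivative on $\partial\Omega$. So $C_0^\infty(\Omega,\C)$ is \emph{not} an operator core of $T_\Omega$ when $\partial\Omega\ne\emptyset$, and closing $(T_\Omega)_{\min}$ as you propose produces the wrong operator. The paper avoids this by starting from the larger minimal domain
$\{f\in W^{1,2}_0(\Omega,\C):(-\Delta+Q_0)f\in L^2(\Omega,\C),\ (1-\xi)f\in C_0^\infty(\Omega,\C)\}$,
where $\xi$ is a cut-off equal to $1$ on a neighbourhood of $\R^d\setminus\Omega$, then proves the core property by splitting $f=\xi f+(1-\xi)f$, approximating $(1-\xi)f$ via the whole-space Kato operator for the truncated potential $\widetilde Q_0:=\zeta_0 Q_0$, and re-cutting with $\zeta_1$. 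Identifying the resulting closure with the Edmunds--Evans Dirichlet realization and obtaining $m$-accretivity from \cite[Thm.\ VII.2.5]{edmundsevans} (the domain version, not \cite[Thm.\ VII.2.6,\ Cor.\ VII.2.7]{edmundsevans}, which are for $\R^d$) is the substantive step; it is not a bookkeeping issue, and your sketch does not address it.

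You flag compactness of the resolvent on the exterior domain as the delicate point; in fact that is handled almost exactly as on $\R^d$ once the tail estimate is available. The genuinely delicate point is the choice of minimal domain and the core argument in Case~II described above.
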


\noindent
\emph{Proof of Theorem {\rm \ref{thm.ext}}:} Sketch of modifications of the proofs. 

\subsection{Semi-angle $\theta < \pi/2$}
\label{subsec.sec.ext}
The analogue of Proposition \ref{prop.T.sec.def} holds for the $m$-sectorial operator $T_{\Omega}$
which is uniquely determined by the closed sectorial form
\begin{equation}\label{t.def.ext}
\begin{aligned}
t_{\Omega}[f] & := \| \nabla f \|_\Omega^2 + 
 \int_{\partial \Omega^{\rm R}} 
a_{\rm in} |f|^2 \d  \sigma 
+ q_0[f] + w[f],\\
\dom(t_\Omega) & := \overline{\DD_\Omega}^{\, (\|\cdot\|^2_{W^{1,2}(\Omega,\C)} + \re q_0[\cdot])^\frac12},
\end{aligned}
\end{equation}
	where
	\begin{equation}\label{D.Omega.def}
	\begin{aligned}
	\DD_\Omega&:=\big \{
	f \!\in\! C^{\infty}(\Omega,\C) : \exists f_0 \in \CcRdC, f \!=\! f_0 \restriction \Omega, 
	\, \supp f \cap \partial \Omega^{\rm D} \!=\! \emptyset 
	\big
	\}.
	\end{aligned}
	\end{equation}
To show that $T_\Omega$ has compact resolvent, first observe that
\begin{equation}\label{Dom.tOm.encl}
\dom (t_\Omega) \subset W^{1,2}(\Omega,\C) \cap \dom(q_0)
\end{equation}
and that, by Assumptions \ref{ass.q.sec.w.Om}, \ref{ass.q.sec.w.2.Om}, and a trace embedding analogous to~\eqref{emb.est.n},
there is a constant $c>0$ so that $(\re t_\Omega[\cdot] + c \|\cdot\|_\Omega^2)^{1/2}$ is equivalent to $(\| \cdot \|^2_{W^{1,2}(\Omega,\C)} \!+\! \re q_0[\cdot])^{1/2}$.
Next, similarly as in \eqref{phin.psin.conv.sec}, there is $C>0$ such that,  for all sufficiently large $n\in \N$ and all $f \in \dom(t_\Omega)$, 
\begin{equation}\label{dom.Om.dec}
\int_{x \in\Omega, |x| \geq n} |f|^2 \d  x
\leq 
C
\frac{\re t_\Omega[f] + c \|f\|^2_{\Omega}}{
	\mathop{\rm ess \, inf}\limits_{x \in \Omega, |x| \geq n} \re  Q_0}.
\end{equation}
Therefore \cite[Thm.\ 2.33]{Adams-2003} and Assumption \ref{ass.q.sec.unbdd.Om} imply that
the embedding $(\dom(t_\Omega)$, $(\re t_\Omega[\cdot] + c\|\cdot\|_{\Omega}^2)^{1/2}) \hookrightarrow L^2(\Omega,\C)$ is compact.

The approximating operators $(T_{\Omega})_n$ are introduced analogously as in Section~\ref{sec.Tn}.
In fact, under Assumption~\ref{ass.lam.n} with $\Rd$ replaced by $\Omega$, Lemma~\ref{lem.s0.def} and Proposition~\ref{prop.Tn.def.sec} are generalized in a straightforward way; there appears an additional boundary term as in \eqref{t.def.ext} which is a harmless relatively bounded perturbation with relative bound $0$. 

In order to prove an analogue of the generalized strong resolvent convergence in Lemma~\ref{lem.s.res.conv}, 
we use that~$\DD_\Omega$ in \eqref{D.Omega.def} is a core of $t_{\Omega}$
 and rely on the estimate \eqref{dom.Om.dec}; the latter is also used to prove the analogue of  Lemma \ref{lem.discr.comp},
\ie~the discrete compactness of the embeddings. 

Norm resolvent convergence and convergence of spectra and pseudospectra then follow in a straightforward way from these analogues of Lemmas~\ref{lem.s.res.conv},~\ref{lem.discr.comp}.

\subsection{Semi-angle $\theta \geq \pi/2$}
\label{subsec.nonsec.ext}
In order to prove an analogue of Proposition~\ref{prop.T.def}, take ${R_\Omega}>0$ sufficiently large and $\xi \in \CcRdR$ such that 
$$\Rd \setminus \Omega \subset B_{R_\Omega-2}(0), \quad \xi\restriction B_{R_\Omega}(0)  = 1, \quad \supp \xi \subset B_{{R_\Omega}+1}(0).$$ 
Then the closure of 
\begin{align}
(T_{\Omega})_{\min}&:=-\Delta+Q,\\
\dom((T_{\Omega})_{\min})&:=\{f \in W^{1,2}_0(\Omega,\C) \, : \, (-\Delta + Q_0)f \in L^2(\Omega,\C),\, (1-\xi)f \in C_0^\infty(\Omega,\C)\},
\\[-8mm]
\end{align}
is given by
\begin{equation}\label{eq.def.TOmega}
T_{\Omega}=-\Delta + Q, \quad \dom(T_{\Omega})=\{ f \in W^{1,2}_0(\Omega,\C) \, : \, (-\Delta + Q_0)f \in L^2(\Omega,\C) \}.
\end{equation}
In fact, we may proceed similarly as in the proof of Proposition~\ref{prop.T.def}, starting with
\begin{equation}
(T_{\Omega})_{0,\min}:=-\Delta + Q_0, \quad \dom((T_{\Omega})_{0,\min}):= \dom((T_{\Omega})_{\min}).
\end{equation}
Then  Lemmas \ref{lem.tau0.def}--\ref{lem.norm.eq} may be generalized
for $(T_{\Omega})_{0,\min}$ and $(T_{\Omega})_{\min}$, with similar arguments as for the generalizations for the operators $T_{0,n}$, \cf~Lemmas~\ref{lem.T0n.g.norm} and~\ref{lem.T0n.W}.
To this end, we use that every function $f\in\dom((T_{\Omega})_{0,\min})$ has compact support and belongs to the domain of the Laplacian defined in $L^2(\Omega,\C)$
(because $\xi f$ and $(1-\xi)f$ both belong to the latter domain);
note that  the quadratic form has  no boundary term because $\partial \Omega = \partial \Omega^{\rm D}$ by the assumptions. 
We thus arrive at the analogue of the estimate~\eqref{nab.est} and at a norm equivalence similar to~\eqref{norm.equiv},
\ie~ there exist $\beta_{\Omega}, k_{\Omega}, K_{\Omega}>0$ such that, for all $f\in\dom((T_{\Omega})_{0,\min})=\dom((T_{\Omega})_{\min})$,
\begin{equation}\label{nab.est.ext}
\| \nabla f \|^2_{\Omega}
\leq
\frac{\beta_{\Omega}}{2}\left\|\Delta f\right\|^2_{\Omega}+\frac{1}{2\beta_{\Omega}}\left\|f\right\|^2_{\Omega}
\end{equation}
and
\begin{equation}\label{norm.equiv.ext}
\begin{aligned}
& 
k_{\Omega}
\left(
\|\Delta f\|^2_{\Omega}+ \|Q_0f\|^2_{\Omega} + \|f\|^2_{\Omega}
\right) 
\\
&
\leq 
\|(T_{\Omega})_{\min} f\|^2_{\Omega}+\|f\|^2_{\Omega}
\\
&
\leq 
K_{\Omega}
\left(
\|\Delta f\|^2_{\Omega}+ \|Q_0 f\|^2_{\Omega} + \|f\|^2_{\Omega}
\right).
\end{aligned}
\end{equation}
The latter continues to hold for the closure of $(T_{\Omega})_{\min}$.
Below we prove that the closure of $(T_{\Omega})_{0,\min}$ is $(T_{\Omega})_0$ where
\begin{equation}
(T_{\Omega})_0:=-\Delta + Q_0, \quad \dom((T_{\Omega})_0):=\{ f \in W^{1,2}_0(\Omega,\C) \, : \, (-\Delta + Q_0)f \in L^2(\Omega,\C) \};
\end{equation}
then~\eqref{eq.def.TOmega} follows from~\eqref{norm.equiv.ext}.

To justify that $(T_\Omega)_{0}$ is the closure of $(T_{\Omega})_{0,\min}$, we employ two cut-off functions $\zeta_i \in C^\infty(\Omega,\R)$, $i=0,1$, that satisfy
\begin{alignat*}{5}
0 &\leq \zeta_0 \leq 1, \quad &\zeta_0\restriction \Omega \setminus B_{R_\Omega-1}(0) &=1, \quad &\zeta_0\restriction  B_{R_\Omega-2}(0) \cap \Omega &= 0,\\
0 &\leq \zeta_1 \leq 1, \quad &\zeta_1\restriction \Omega \setminus B_{R_\Omega}(0) &=1, \quad &\zeta_1 \restriction B_{R_\Omega-1}(0) \cap \Omega &= 0.
\end{alignat*}
Note that these properties yield
\begin{equation}\label{eq.cutoff}
\zeta_0\zeta_1=\zeta_1, \quad \zeta_i(1-\xi)=1-\xi, \quad i=0,1.
\end{equation}
The potential $\widetilde Q_0:=\zeta_0 Q_0$ satisfies Assumption~\ref{ass.q.non-sec}.
Thus Proposition \ref{prop.T.def} and its proof imply that
\begin{equation}
T_0: = -\Delta + \widetilde Q_0, 
\quad 
\dom(T_0):= \big\{f\in \WotRd: \, (-\Delta + \widetilde Q_0) f\in \LRd \big\},
\end{equation}
has the separation property, \ie~$\dom(T_0) = W^{2,2}(\R^d,\C) \cap \{ f \in \LRd: \widetilde Q_0 f \in \LRd \}$, 
and $\CcRdC$ is a core of $T_0$.
Let  $-\Delta^{\rm D}_{B_{R_{\Omega}+1}(0) \cap \Omega}$
be the Dirichlet Laplacian in $L^2(B_{{R_\Omega}+1}(0)\cap\Omega,\C)$ defined via its quadratic form.
Observe that if $f \in \dom((T_{\Omega})_0)$, then $\xi f\in \dom(-\Delta^{\rm D}_{B_{R_{\Omega}+1}(0) \cap \Omega})$ and $(1-\xi) f\in\dom(T_0)$. 
Since  $\CcRdC$ is a core of $T_0$, there exists a sequence $\{f_n\}_n \subset \CcRdC$ that converges to the function $(1-\xi) f \in \dom(T_0)$ in the graph norm of $T_0$. 
Using~\eqref{eq.cutoff} and~\eqref{nab.est.ext},~\eqref{norm.equiv.ext}, one may verify that the same holds for the sequence $\{\zeta_1 f_n\}_n\subset C_0^{\infty}(\Omega,\C)$.
Then 
$$(T_\Omega)_0 (1-\xi)f = T_0 (1-\xi)f, \quad (T_\Omega)_0 \zeta_1 f_n =T_0 \zeta_1 f_n$$
implies that $\{\xi f + \zeta_1 f_n\}_n \subset \dom((T_\Omega)_{0,{\rm min}})$ approximates $f$ in the graph norm of~$(T_\Omega)_0$, 
and so the claim follows.

The operator $(T_{\Omega})_0$ is $m$-accretive, \cf~Kato's theorem \cite[Thm.\ VII.2.5]{edmundsevans}.
Using Assumption~\ref{ass.q.non-sec.unbdd.Om}, we obtain that the resolvent of $(T_\Omega)_0$ is compact. The same holds for $T_\Omega$ by a perturbation argument as in the proof of Proposition \ref{prop.T.def}.
In the same way, we prove a resolvent estimate similar to~\eqref{Res.dec}.

The approximating operators are introduced similarly as in Section \ref{subsec.Tn.non-sec}.

\indent
Generalized strong resolvent convergence, \cf~Lemma~\ref{lem.s.res.conv}, 
 and discrete compactness of the embeddings, \cf~Lemma \ref{lem.discr.comp}, can be verified by straightforward generalizations of the given proofs; here we make use of the analogue of~\eqref{phi.psi.cnv.2} which follows from~\eqref{norm.equiv.ext}.

As in the case $\theta<\pi/2$, the claims in Theorem~\ref{thm.ext} then follow from these analogues of Lemmas~\ref{lem.s.res.conv},~\ref{lem.discr.comp}.

\section{Examples}
\label{sec.ex}

In this section, we present numerical examples for dimensions $d=1,2,3$. 
All numerical computations arising in this section were performed on a standard dual-core Linux machine with the use of the software Wolfram Mathematica 9. The differential equations on finite intervals were solved numerically by implementing a shooting method in Mathematica.
\subsection{\boldmath Potentials $Q(x)=\I x$ and  $Q(x)=\I x^3$ in $\R$}
\label{subsec.ex.d1}
The sets $\Omega_n$ are intervals $(-s_n,s_n)$ with $s_n\nearrow \infty$ as $n\to \infty$ 
and we impose various boundary conditions at the endpoints $\pm s_n$. 
\subsubsection*{Potential $Q(x)=\I x$}
Here  
the resolvent of $T$ is compact and the spectrum of $T$~is empty, \cf~for example \cite{Sibuya-1975, Almog-2008-40}, 
whereas the spectrum of $T_n$ in $L^2((-s_n,s_n),\C)$ is~\emph{not},
\begin{equation}\label{last}
 \sigma(T)=\emptyset, \quad \sigma(T_n) \ne \emptyset, \ n\in \N,
\end{equation}%
since $T_n$ is a bounded perturbation of $-\rd^2/\rd x^2$ with separated  boundary conditions;
moreover, the system of eigenfunctions and associated functions of the operator $T_n$ forms a Riesz basis, 
\cf~\cite{Mikhajlov-1962-3}. 
The pseudospectra of $T$ are also well-studied, \cf~\cite{BordeauxMontrieux-2013}. 
For  $T_n$ with Dirichlet conditions, a detailed analysis of the bottom of the spectrum 
can be found in \cite{Beauchard-2013}:  it was proved that
\begin{equation}\label{ev.ix.lim}
\lim_{n \to \infty} (\inf \re \sigma(T_n)) = \frac{|\mu_1|}{2}
\end{equation}
where $0>\mu_1 \approx -2.338$ is the first zero of the Airy function, \cf~\cite[Thm.\ 3.1]{Beauchard-2013}.

We illustrate the behaviour of the eigenvalues of $T_n$ with Dirichlet conditions  in Figure~\ref{plot_ir}; for Robin conditions, the plots look similar.
The parameter $s_n$ is chosen as $s_n:=0.05 n$ for $n=1,\dots,200$, hence $s_n\in [0,10]$. 
\begin{figure}[h]
	\includegraphics[width=0.49\textwidth,height=4cm]{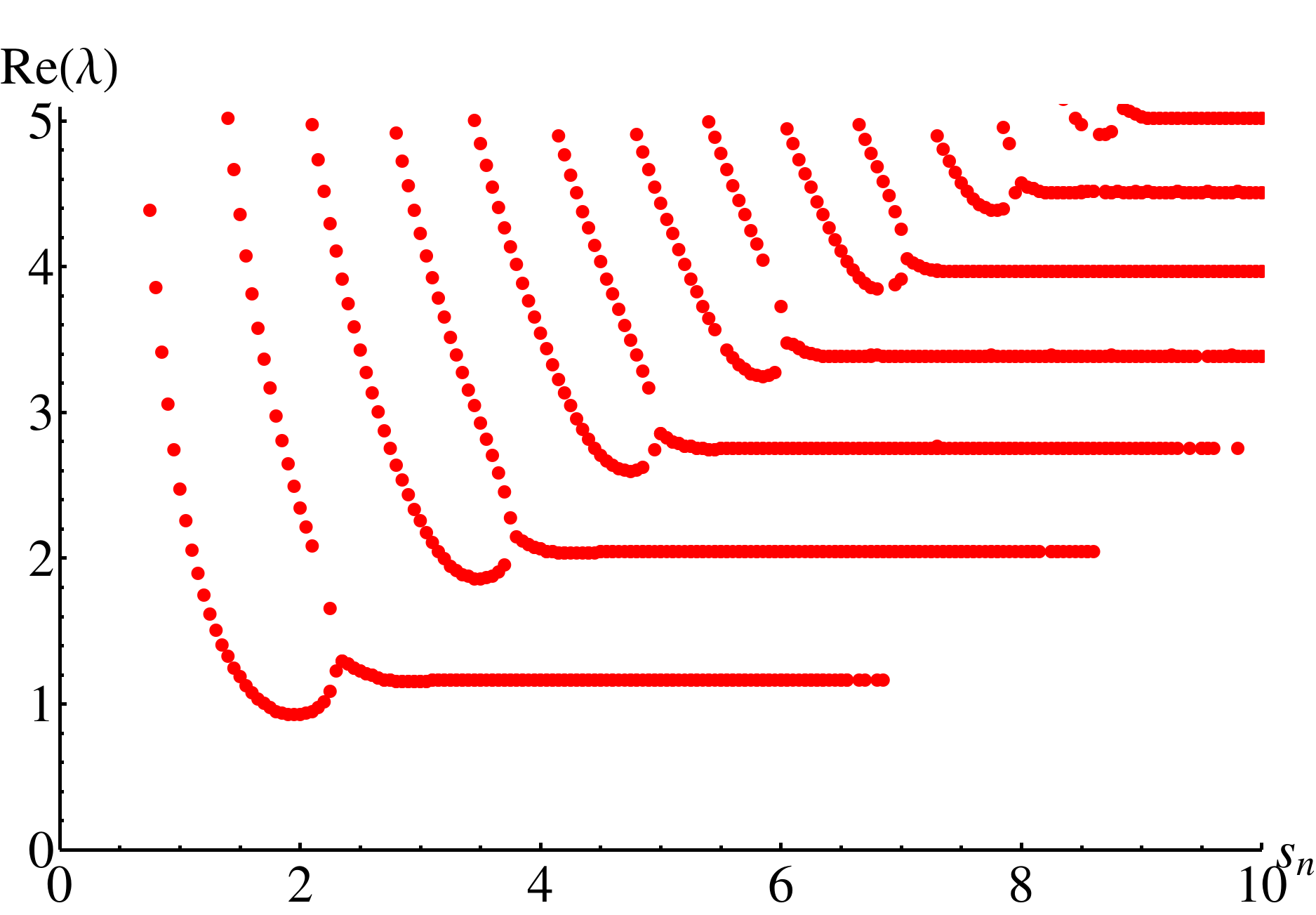} \hfill
	\includegraphics[width=0.49\textwidth,height=4cm]{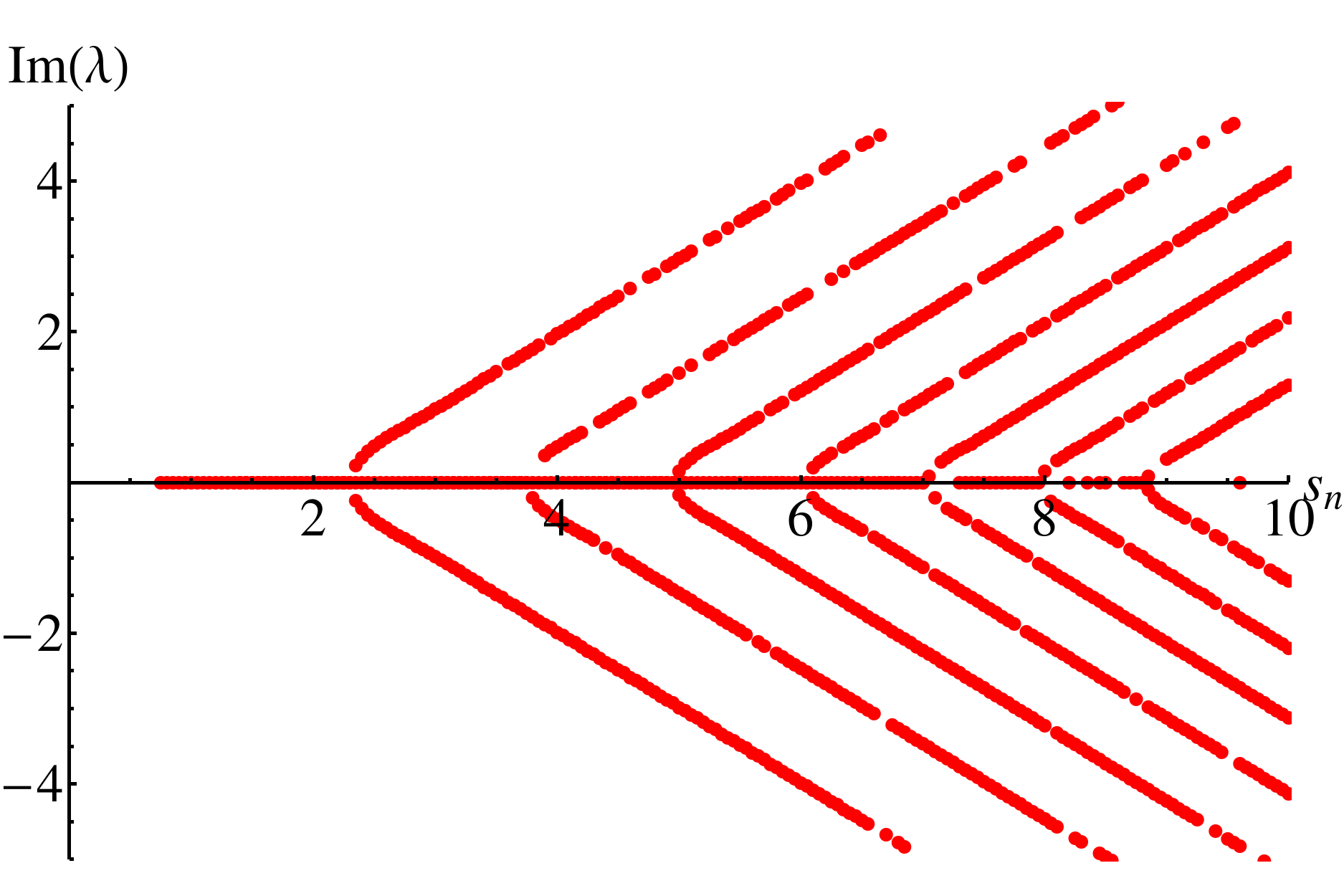}
	\caption{\label{plot_ir}
		$Q(x)=\I x$: Real (left) and imaginary (right) part of the eigenvalues of $T_n$ on $(-s_n,s_n)$  and Dirichlet conditions.}
\vspace{-1mm}		
\end{figure}
We see that, as $n$ increases, every eigenvalue $\lm_n$ of $T_n$ meets another, and they form a complex conjugate pair.
The real parts of this pair seem to converge, whereas the imaginary parts diverge to $\pm \infty$ in almost straight lines. Hence in the limit $n \to \infty$ there are no eigenvalues left, 
which is in agreement with \eqref{last} and our result on spectral exactness, \cf~Theorem~\ref{thm.conv.spectrum}. 
The behaviour of the eigenvalue with the lowest real part is in agreement with the result \eqref{ev.ix.lim}.

\subsubsection*{Potential $Q(x)=\I x^3$}

The imaginary cubic oscillator and related operators have been studied extensively, \cf~\cite{Caliceti-1980-75, Bender-1998-80, Shin-2002-229, Siegl-2012-86, Grecchi-2013-319, Henry-2013b, Giordanelli-2015-16}. 
It is known that all eigenvalues $\lambda^{(k)}$ of $T$ are real and simple, behave asymptotically as $k^{6/5}$, and the 
system of eigenfunctions of $T$ is complete in $L^2(\R,\C)$ but does not form a basis, \cf~\cite{Siegl-2012-86, Henry-2013b} for the two latter. 
As in the previous example, 
the system of eigenfunctions of $T_n$ forms a Riesz basis.

The plot in~Figure~\ref{plot_ir3} shows the behaviour of the eigenvalues of $T_n$ for increasing $\Omega_n=(-s_n,s_n)$ with Dirichlet and Neumann boundary conditions at $\pm s_n$, marked by red balls and blue squares, respectively; for Robin conditions the eigenvalue behaviour is similar.
\begin{figure}[h]
	\includegraphics[width=0.49\textwidth,height=4cm]{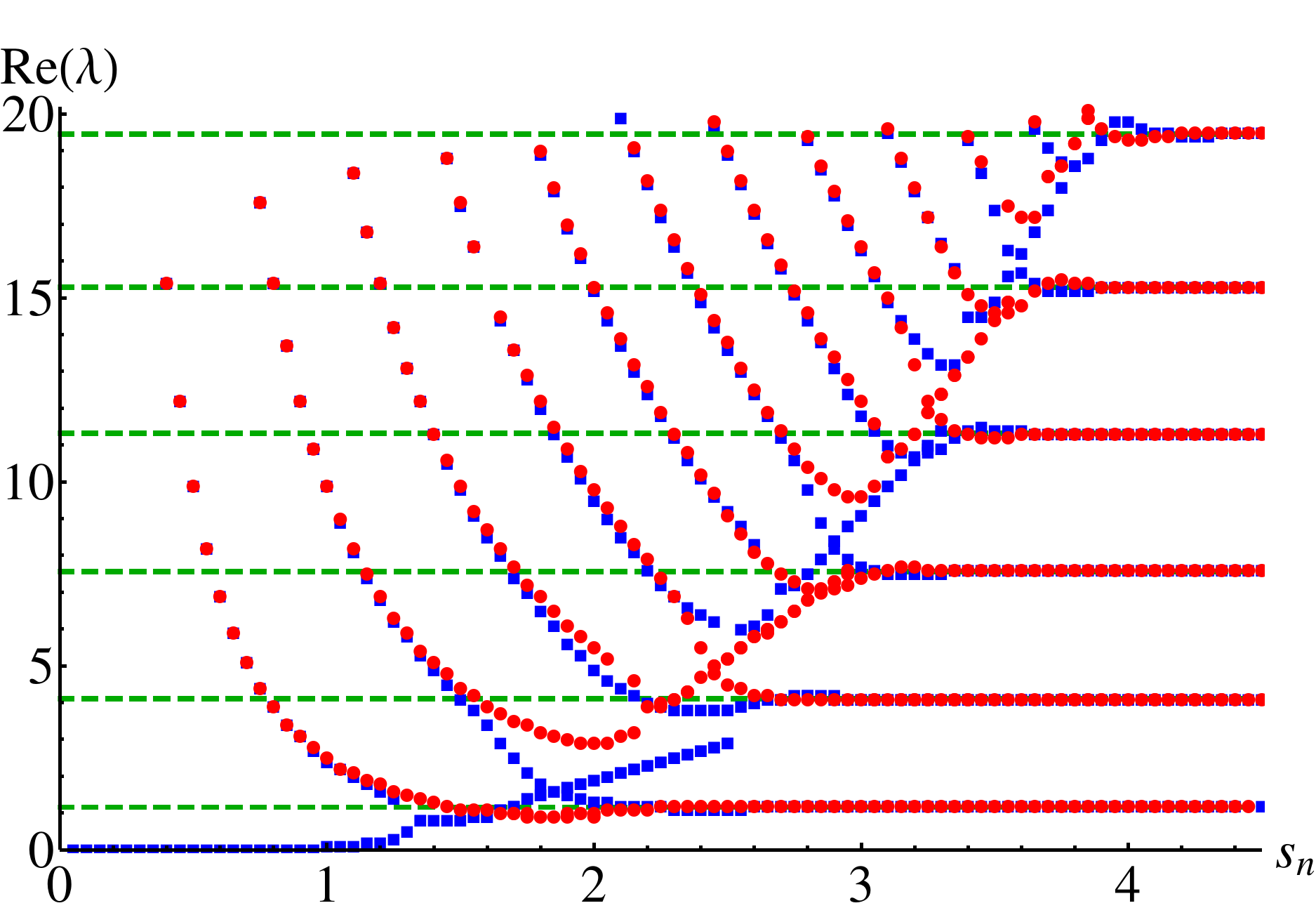} \hfill
	\includegraphics[width=0.49\textwidth,height=4cm]{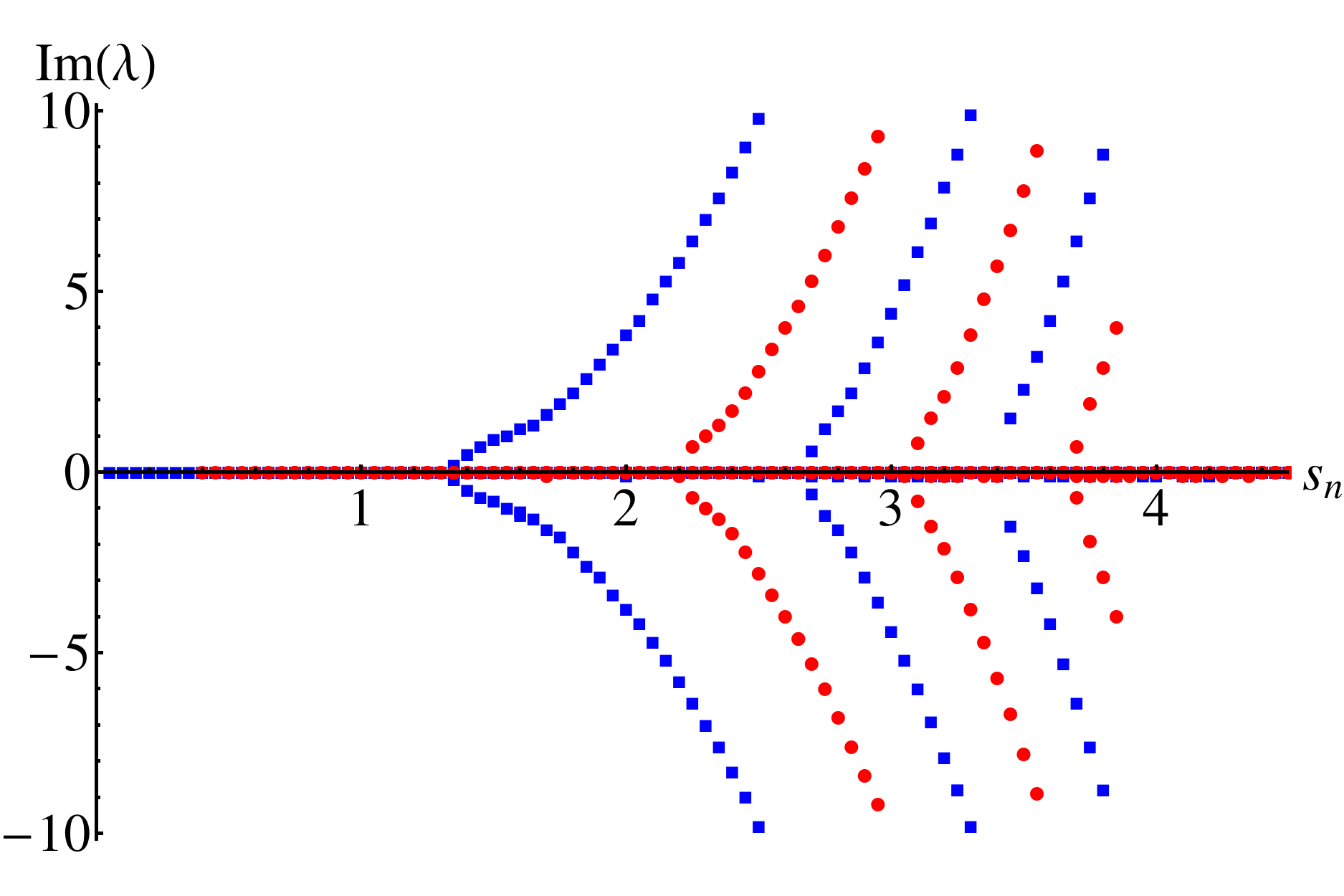}
	\caption{\label{plot_ir3}
		\small $Q(x)\!=\!\I x^3$: Real (left) and imaginary (right) part of the eigenvalues of $T_n$  on $(-s_n,s_n)$  with Dirichlet (red balls)\,/\,Neu\-mann (blue squares) conditions, and lowest 6 (real) eigenvalues of $T$ (dashed horizontal~lines).}
\vspace{-1mm}
\end{figure}

For small $n$, the eigenvalues are all real. 
If $n$ is increased, some eigenvalues 
form complex conjugate pairs and their imaginary parts diverge, so they do not have a limit in $\C$, 
while other eigenvalues do converge to a finite limit $\lm$ which must be an eigenvalue of $T$ due to our spectral exactness Theorem~\ref{thm.conv.spectrum}, 
and all eigenvalues of $T$ are approximated in this way. 
This also confirms that the numerically computed eigenvalues in \cite[Tab.\ 1]{Bender-1998-80} or the following ones computed by M. Tater, \cf~\cite{MT-priv}, 
\begin{equation}
\begin{aligned}
\lm_{\rm M.T.}^{(1)}&=1.1562671, \quad \lm_{\rm M.T.}^{(2)}=4.1092288, \quad \lm_{\rm M.T.}^{(3)}=7.5622739,\\
\lm_{\rm M.T.}^{(4)}&=11.314422,\quad \lm_{\rm M.T.}^{(5)}=15.291554, \quad \lm_{\rm M.T.}^{(6)}=19.451529,
\end{aligned}
\end{equation}
indeed approximate true eigenvalues.

Figure~\ref{plot_ir3} also shows that, for $s_n\geq 4$, at the bottom of the spectrum of $T$ the difference between our numerical approximations and their limits, \ie~the
true eigenvalues, marked by dashed horizontal lines, 
is already very small. 
For Dirichlet conditions at the endpoints $\pm s_n$, the convergence rate of $|\lm^{(1)}-\lm_n^{(1)}|$ is illustrated in  Figure~\ref{plot_ir3_convRate}, where 
\begin{equation}\label{eq.lm.1}
\lm^{(1)}:=\min\sigma(T)\approx \lm_{\rm M.T.}^{(1)}, \quad \lm_n^{(1)}:=\min\sigma(T_n).
\end{equation}
The left plot in  Figure~\ref{plot_ir3_convRate} is a zoom of Figure~\ref{plot_ir3} near the relevant values
which reveals that $\lm_n^{(1)}$ seems to converge to $\lm^{(1)}$ in an oscillatory manner;
the right plot, where the values of $\log |\lm^{(1)}-\lm_n^{(1)}|$ are shown to make the oscillations better visible,
suggests an exponential convergence rate of the eigenvalues as $s_n \nearrow \infty$. 
\begin{figure}[h]
	\includegraphics[width=0.49\textwidth,height=4cm]{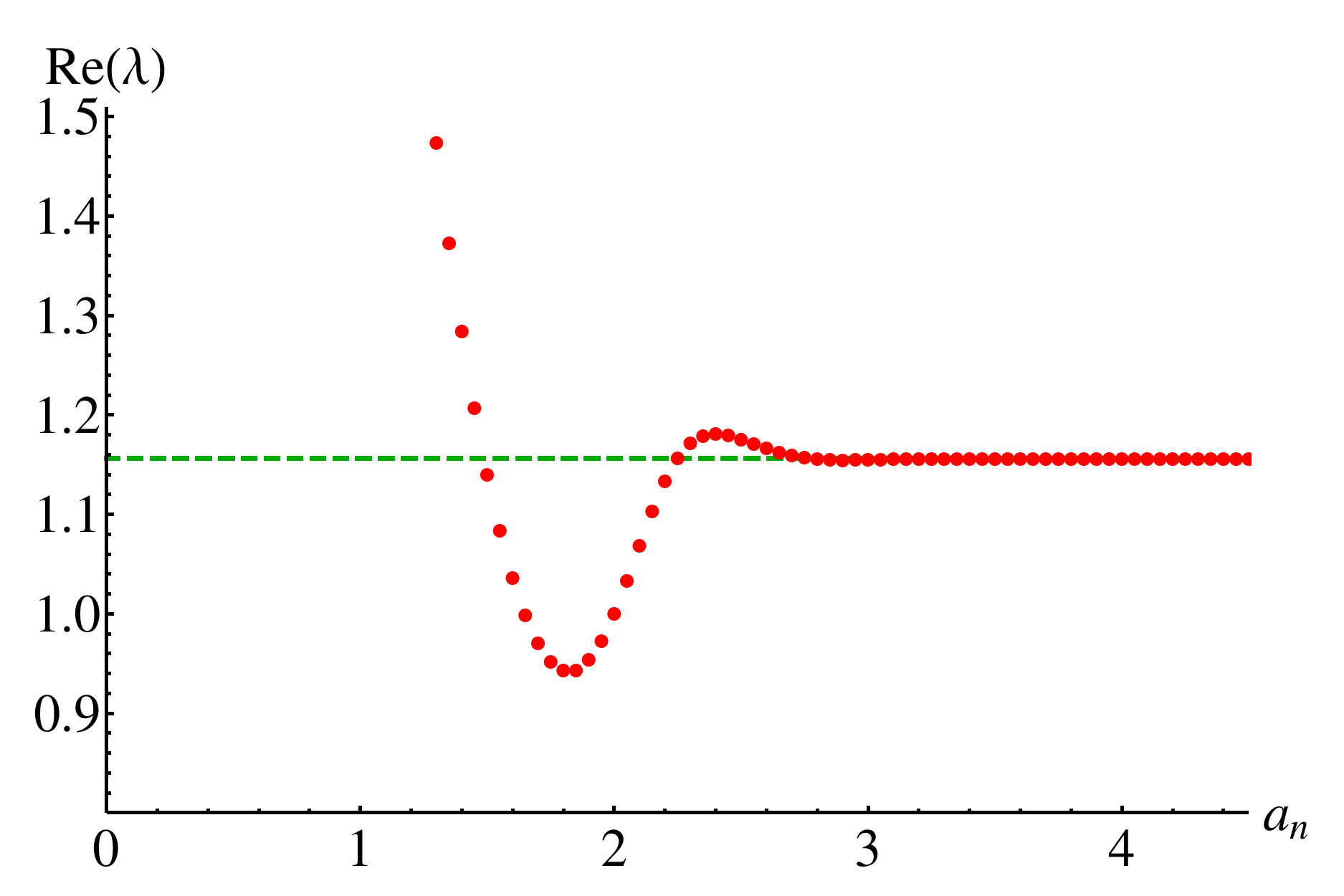}\hfill
	\includegraphics[width=0.49\textwidth,height=4cm]{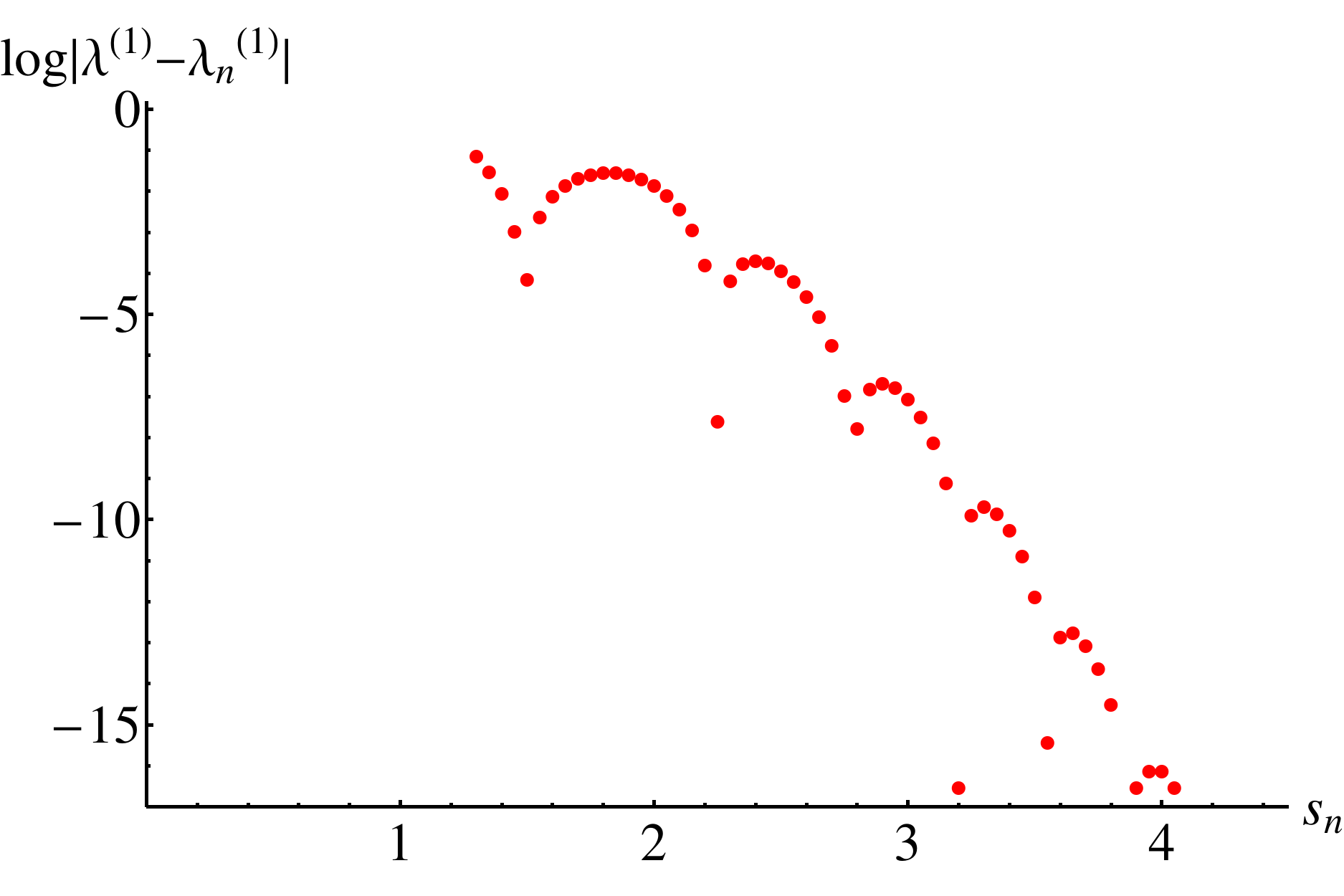}
	\caption{\label{plot_ir3_convRate}
		\small $Q(x)=\I x^3$: Approximation of lowest eigenvalue $\lm^{(1)}$ (dashed horizontal line on the left) of $T$  by lowest eigenvalue $\lm_n^{(1)}$ of $T_n$ with Dirichlet conditions.}
\vspace{-1mm}
\end{figure}

\subsection{\boldmath Harmonic oscillator in $\R^3$}
\noindent
In several dimensions, the simplest choice of $\Omega_n$ are cubes and balls. 
While the former are natural for potentials allowing for separation in Cartesian coordinates,  the latter are suitable for radial potentials, \ie~$Q(x)=Q(|x|)$. 
The harmonic oscillator, \ie~$Q(x):=|x|^2$, allows for both separations. In the following, we compare the approximations for cubes and balls.

For $\Omega_n=(-s_n,s_n)^3$ with $s_n\nearrow \infty$, 
the eigenvalue problem for $T_n$ is reduced to the one-dimensional problem
\begin{equation}\label{eq.harm.osc.separated}
-f''(x)+x^2 f(x)=\mu f(x), \quad x \in (-s_n,s_n),
\end{equation}
subject to Robin conditions at the artificial endpoints $\pm s_n$.
Any eigenvalue $\lambda$ of $T_n$ can be expressed as $\lm=\mu^{(i)}+\mu^{(j)} + \mu^{(l)}$ where $\{\mu^{(k)}\}_k$ are the eigenvalues of the one-dimensional problem~\eqref{eq.harm.osc.separated}.

For $\Omega_n=B_{s_n}(0)$ with $s_n\nearrow \infty$, the operator can be written in spherical coordinates. The eigenfunctions of $T_n$ can be factorized as $f(r,\theta,\varphi)=g(r)Y_l^m(\theta,\varphi)$.
Here the spherical harmonics $Y_l^m(\theta,\varphi)$, $m=-l,\dots,l$,\, $l\in\N_0$, satisfy $\Delta Y_l^m=\frac{l(l+1)}{r^2} Y_l^m$, while $g$ is an eigenfunction of the radial problem 
\begin{equation}\label{def.gn}
-g''(r)-\frac{2}{r} g'(r) + \left(\frac{l(l+1)}{r^2} +r^2\right) g(r) = \lm g(r),\quad r\in (0,s_n),
\end{equation}
with some Robin condition at $s_n$.

In  Figure~\ref{plot_harmosc_xyz}, we compare the eigenvalues of $T_n$ for cubes and balls with Dirichlet conditions; for Robin conditions the plots are similar.
The behaviour of the eigenvalue approximations does not differ much for  cubes and balls, both converge to true eigenvalues and all true eigenvalues are approximated in this way, 
\cf~Theorem~\ref{thm.conv.spectrum}.

The degeneracy of the $k$-th eigenvalue $\lm^{(k)}$ (ordered increasingly) is $\frac{k(k+1)}{2}$; 
note that, in Figure~\ref{plot_harmosc_xyz}, some curves represent eigenvalues of higher multiplicity so that the sum of the multiplicities of all curves converging to $\lm^{(k)}$ equals its degeneracy.

\begin{figure}[h]
	\includegraphics[width=0.49\textwidth,height=4cm]{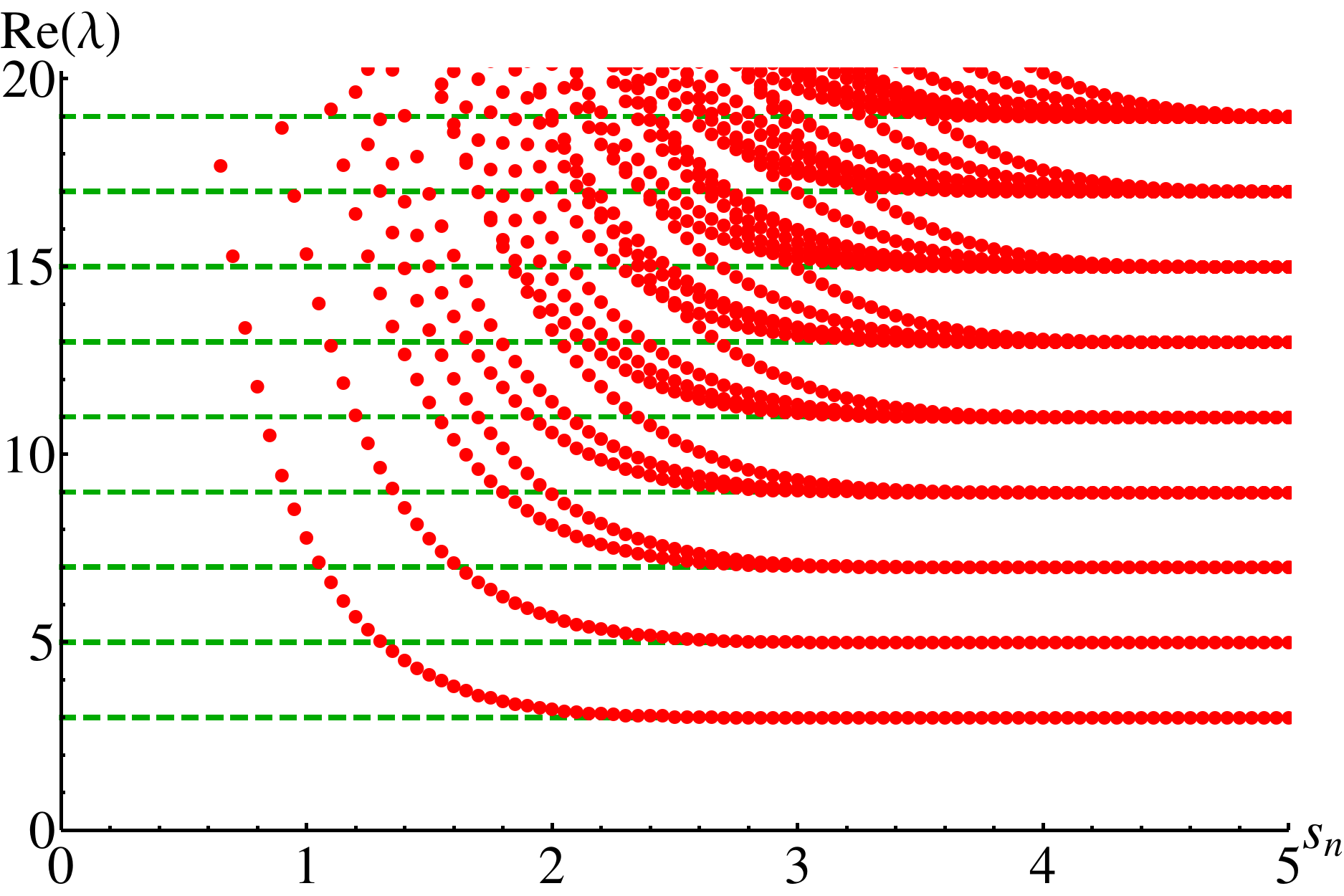} \hfill
	\includegraphics[width=0.49\textwidth,height=4cm]{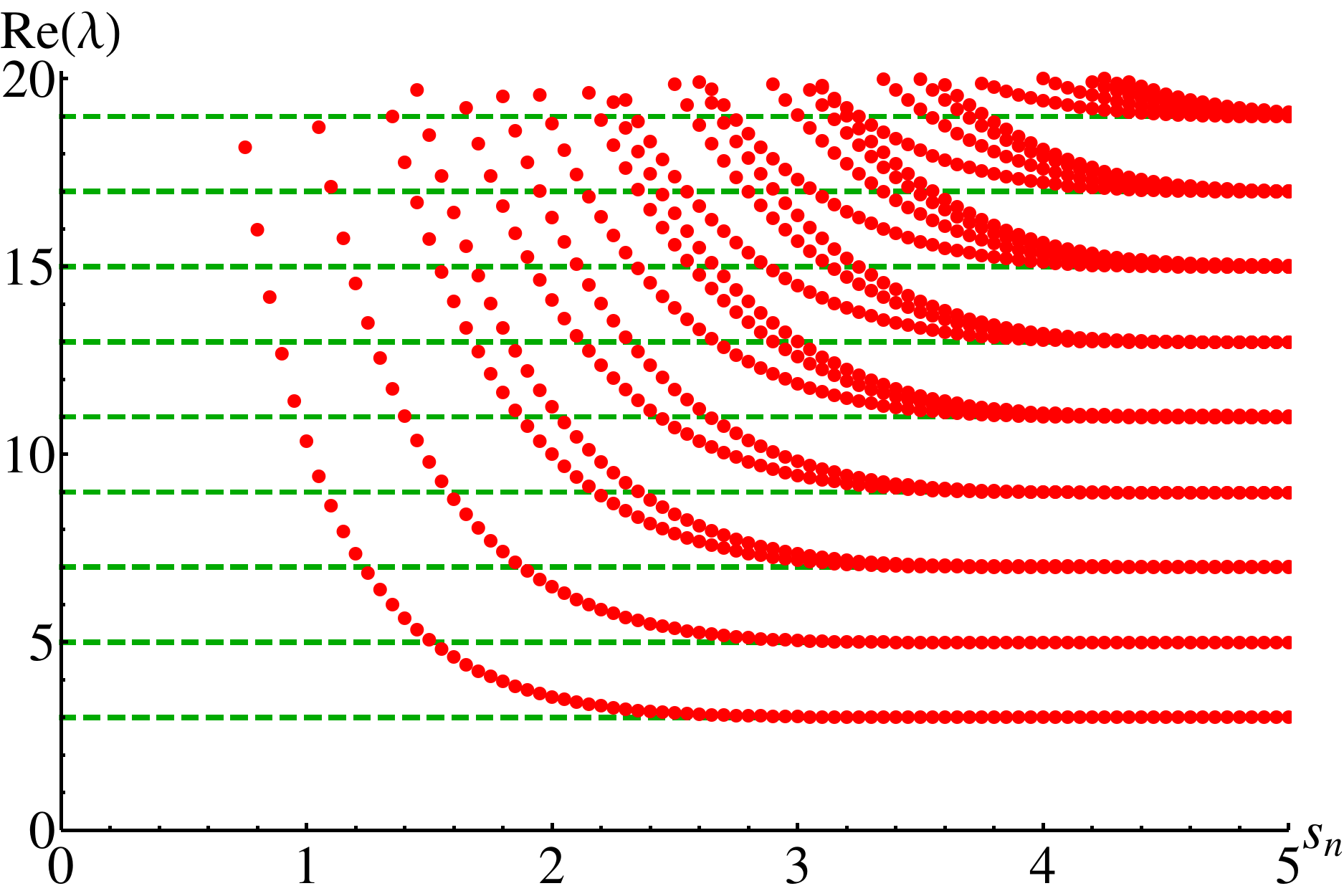}
	\caption{\label{plot_harmosc_xyz}
	\small Eigenvalues of harmonic oscillator in $\R^3\!$ approximated on cubes $\Omega_n\!=\!(-s_n,s_n)^3\!$ (left) and balls $\Omega_n\!=\!B_{s_n}\!(0)$ (right) for Dirichlet conditions.}
\vspace{-1mm}
\end{figure}

\subsection{Rotated oscillator on exterior domain in $\R^2$.}
\label{subsec.ext}
In \cite[Ex.\ 1]{Brown-2004-24}, the \vspace{-1mm} problem 
\begin{equation}
\begin{aligned}
-\Delta f(x,y)+ (1+3\I) (x^2+y^2) f(x,y)&=\lambda f(x,y),&\quad &x^2+y^2\geq 1,\\
f(x,y)&=0,& \quad &x^2+y^2=1,
\end{aligned}
\end{equation}
was studied, but it could not be decided whether domain truncation is spectrally exact. Our new result, \cf~Theorem \ref{thm.conv.spectrum}, yields a definite and positive~result. 

If we truncate the exterior domain $\R^2\backslash B_1(0)$ to $\Omega_n\!:=\!B_{s_n}(0)\backslash \overline{B_1(0)}$ for some sequence $s_n\!\nearrow\! \infty$ as $n\!\to\! \infty$ and impose Dirichlet  conditions on the outer boundary, 
we obtain a spectrally exact approximation, \cf~Theorem \ref{thm.ext}.
In polar coordinates, the truncated problem decouples into an infinite system of problems that depend on $l\in\N_0$ (representing the angular part),
\begin{equation}\label{eq.ext.domain.truncated}
\begin{aligned}
-f''(r)-\frac{1}{r} f'(r)+ \left((1+3\I) r^2+\frac{l^2}{r^2}\right)f(r)&=\lambda f(r),&\quad &r\in (1,s_n),\\
f(r)&=0,& \quad &r\in\{1,s_n\}. 
\end{aligned}
\end{equation}
We performed numerical computations to find and approximate the eigenvalues in the box $[0,20]+ [0,15]\,\I$ for different $l\in\N_0$ and increasing $s_n$. For $l\geq 7$, no eigenvalue was found in this box.
For $l=0,1,\dots,6$, the eigenvalues in the box change very little (less than $10^{-7}$) for $s_n\in [5,10]$. So the numerical approximations for $s_n=10$ shown in 
Table~\ref{table_exterior}\footnote{We mention that the numerical values in \cite{Brown-2004-24} should be modified correspondingly, \cf~\cite{Marco}.}
are already near true \vspace{-1mm} eigenvalues.
\begin{table}[htb!]
	\small
	\begin{tabular}{ll}
		{\small Value of $l$}\hspace{0.5cm} &{\small Approximate eigenvalues $\lm_n$ up to 7 digits}\ \ \\ \hline \\[-1.5ex]
		$l=0$ 	& $\lm_{n}\approx\,8.1962583+9.8951098\,\I$\\[0.5mm]
		$l=1$ 	& $\lm_{n}\approx\,8.5747825+9.9950630\,\I$\\[0.5mm]
		$l=2$ 	& $\lm_{n}\approx\,9.6945118+10.3061585\,\I$\\[0.5mm]
		$l=3$ 	& $\lm_{n}\approx\,11.5061205+10.8625746\,\I$\\[0.5mm]
		$l=4$ 	& $\lm_{n}\approx\,13.9201983+11.7211938 \,\I$\\[0.5mm]
		$l=5$ 	& $\lm_{n}\approx\,16.7923324+12.9529682\,\I$\\[0.5mm]
		$l=6$ 	& $\lm_{n}\approx\,19.9029928+14.6018978 \,\I$\\[2mm]
	\end{tabular}
	\caption{\label{table_exterior}
		\small Eigenvalues $\lm_{n}\in[0,20]+ [0,15]\,\I$ of~\protect\eqref{eq.ext.domain.truncated} for $s_n=10$.}
\end{table}
%
%
\subsection*{Acknowledgements}
{\small
The authors gratefully acknowledge the support of several  funding bodies: 
Swiss National Science Foundation, SNF, grant no.\ 200020\_146477 (S.B., C.T.); 
SNF Early Postdoc.Mobility project P2BEP2\_159007 (S.B.);
SCIEX-NMS Fellowship 11.263 and SNF Ambizione project PZ00P2\_154786 (P.S.).

They also thank A.\ Hansen for drawing their attention to the Attouch-Wets metric and M.\ Tater for numerical values on the imaginary cubic oscillator.
P.S.\ also thanks D.\ Krej\v ci\v r\'ik for fruitful discussions
\eg~on the results in \cite{Krejcirik-2010-94}. 
}

\vspace{-2mm}

\bibliography{mybib}

\begin{thebibliography}{10}

\bibitem{Adams-2003}
{\sc Adams, R.~A., and Fournier, J. J.~F.}
\newblock {\em {Sobolev spaces}}, second~ed.
\newblock Elsevier/Academic Press, Amsterdam, 2003.

\bibitem{Agmon-1982-29}
{\sc Agmon, S.}
\newblock {\em {Lectures on exponential decay of solutions of second-order
  elliptic equations: bounds on eigenfunctions of {$N$}-body {S}chr{\"o}dinger
  operators}}.
\newblock Princeton University Press, Princeton, 1982.

\bibitem{Almog-2008-40}
{\sc Almog, Y.}
\newblock {The Stability of the Normal State of Superconductors in the Presence
  of Electric Currents}.
\newblock {\em SIAM J. Math. Anal. 40\/} (2008), 824--850.

\bibitem{Almog-toappear}
{\sc Almog, Y., and Helffer, B.}
\newblock On the {S}pectrum of {N}on-{S}elfadjoint {S}chr\"odinger {O}perators
  with {C}ompact {R}esolvent.
\newblock {\em Comm. Partial Differential Equations 40}, 8 (2015), 1441--1466.

\bibitem{anselonepalmer}
{\sc Anselone, P.~M., and Palmer, T.~W.}
\newblock Spectral analysis of collectively compact, strongly convergent
  operator sequences.
\newblock {\em Pacific J. Math. 25\/} (1968), 423--431.

\bibitem{Bailey-1993}
{\sc Bailey, P.~B., Everitt, W.~N., Weidmann, J., and Zettl, A.}
\newblock Regular approximations of singular {S}turm-{L}iouville problems.
\newblock {\em Results Math. 23}, 1-2 (1993), 3--22.

\bibitem{Beauchard-2013}
{\sc Beauchard, K., Helffer, B., Henry, R., and Robbiano, L.}
\newblock Degenerate parabolic operators of {K}olmogorov type with a geometric
  control condition.
\newblock {\em ESAIM Control Optim. Calc. Var. 21}, 2 (2015), 487--512.

\bibitem{Beer-1993-268}
{\sc Beer, G.}
\newblock {\em {Topologies on closed and closed convex sets}}.
\newblock Kluwer Academic Publishers Group, Dordrecht, 1993.

\bibitem{Bender-1998-80}
{\sc Bender, C.~M., and Boettcher, S.}
\newblock {Real Spectra in Non-Hermitian Hamiltonians Having $\mathcal{PT}$
  Symmetry}.
\newblock {\em Phys. Rev. Lett. 80\/} (1998), 5243--5246.

\bibitem{Boegli-2014-80}
{\sc B{\"o}gli, S., and Siegl, P.}
\newblock {Remarks on the Convergence of Pseudospectra}.
\newblock {\em Integral Equations Operator Theory 80\/} (2014), 303--321.

\bibitem{BordeauxMontrieux-2013}
{\sc {Bordeaux Montrieux}, W.}
\newblock {Estimation de r{\'e}solvante et construction de quasimode pr{\`e}s
  du bord du pseudospectre}.
\newblock arXiv:1301.3102, 2013.

\bibitem{Boulton-2012-2}
{\sc Boulton, L., Boussa{\"{\i}}d, N., and Lewin, M.}
\newblock {Generalised Weyl theorems and spectral pollution in the Galerkin
  method}.
\newblock {\em J. Spec. Theory 2\/} (2012), 329--354.

\bibitem{Boulton-2013}
{\sc Boulton, L., and Hobiny, A.}
\newblock {On the convergence of the quadratic method}.
\newblock {\em IMA J. Numer. Anal.\/} (to appear).

\bibitem{Brown-Marletta-2001}
{\sc Brown, B.~M., and Marletta, M.}
\newblock Spectral inclusion and spectral exactness for singular
  non-self-adjoint {S}turm-{L}iouville problems.
\newblock {\em R. Soc. Lond. Proc. Ser. A Math. Phys. Eng. Sci. 457\/} (2001),
  117--139.

\bibitem{Brown-Marletta-2003}
{\sc Brown, B.~M., and Marletta, M.}
\newblock Spectral inclusion and spectral exactness for singular
  non-self-adjoint {H}amiltonian systems.
\newblock {\em R. Soc. Lond. Proc. Ser. A Math. Phys. Eng. Sci. 459}, 2036
  (2003), 1987--2009.

\bibitem{Brown-2004-24}
{\sc Brown, B.~M., and Marletta, M.}
\newblock {Spectral inclusion and spectral exactness for PDEs on exterior
  domains}.
\newblock {\em IMA J. Numer. Anal. 24\/} (2004), 21--43.

\bibitem{Caliceti-1980-75}
{\sc Caliceti, E., Graffi, S., and Maioli, M.}
\newblock {Perturbation theory of odd anharmonic oscillators}.
\newblock {\em Comm. Math. Phys. 75\/} (1980), 51--66.

\bibitem{Cappiello-2010-111}
{\sc Cappiello, M., Gramchev, T., and Rodino, L.}
\newblock {Entire extensions and exponential decay for semilinear elliptic
  equations}.
\newblock {\em J. Anal. Math. 111\/} (2010), 339--367.

\bibitem{Combes-1983-52}
{\sc Combes, J.-M., Duclos, P., and Seiler, R.}
\newblock {Krein's formula and one-dimensional multiple-well}.
\newblock {\em J. Funct. Anal. 52\/} (1983), 257--301.

\bibitem{Davies-1995}
{\sc Davies, E.~B.}
\newblock {\em {Spectral theory and differential operators}}.
\newblock Cambridge University Press, 1995.

\bibitem{Davies-2014}
{\sc Davies, E.~B., and Shargorodsky, E.}
\newblock {Level sets of the resolvent norm of a linear operator revisited}.
\newblock {\em Mathematika\/} (published online).

\bibitem{DS2}
{\sc Dunford, N., and Schwartz, J.~T.}
\newblock {\em {L}inear {O}perators, {S}pectral {T}heory, {S}elf {A}djoint
  {O}perators in {H}ilbert {S}pace, {P}art 2}.
\newblock John Wiley \& Sons, Inc., New York, 1988.

\bibitem{edmundsevans}
{\sc Edmunds, D.~E., and Evans, W.~D.}
\newblock {\em Spectral theory and differential operators}.
\newblock Oxford University Press, New York, 1987.

\bibitem{Friedlander-2009-170}
{\sc Friedlander, L., and Solomyak, M.}
\newblock {On the spectrum of the Dirichlet Laplacian in a narrow strip}.
\newblock {\em Israel J. Math. 170\/} (2009), 337--354.

\bibitem{Gannot-2014}
{\sc Gannot, O.}
\newblock {Effects of confinement for single-well potentials}.
\newblock arXiv:1410.0068, Sept. 2014.

\bibitem{Giordanelli-2015-16}
{\sc Giordanelli, I., and Graf, G.~M.}
\newblock {The Real Spectrum of the Imaginary Cubic Oscillator: An Expository
  Proof}.
\newblock {\em Ann. Henri Poincar{\'e} 16\/} (2015), 99--112.

\bibitem{Grecchi-2013-319}
{\sc Grecchi, V., and Martinez, A.}
\newblock {The spectrum of the cubic oscillator}.
\newblock {\em Comm. Math. Phys. 319\/} (2013), 479--500.

\bibitem{Grisvard-1985-24}
{\sc Grisvard, P.}
\newblock {\em {Elliptic problems in nonsmooth domains}}.
\newblock Pitman, Boston, MA, 1985.

\bibitem{Haase-2006-169}
{\sc Haase, M.}
\newblock {\em {The functional calculus for sectorial operators}}, vol.~169 of
  {\em Operator Theory: Advances and Applications}.
\newblock Birkh{\"a}user Verlag, Basel, 2006.

\bibitem{Hansen-Wong}
{\sc Hansen, A.~C., and Wong, C.~A.}
\newblock {On the computation of spectra and pseudospectra of self-adjoint and
  non-self-adjoint Schr\"odinger operators}.
\newblock {http://citeseerx.ist.psu.edu/viewdoc/summary?doi=10.1.1.396.4079}.

\bibitem{Henry-2013b}
{\sc Henry, R.}
\newblock {Spectral Projections of the Complex Cubic Oscillator}.
\newblock {\em Ann. Henri Poincar{\'e}\/} (2013), 1--19.

\bibitem{Hess-1970-45}
{\sc Hess, P., and Kato, T.}
\newblock {Perturbation of closed operators and their adjoints}.
\newblock {\em Comment. Math. Helv. 45\/} (1970), 524--529.

\bibitem{kato}
{\sc Kato, T.}
\newblock {\em {Perturbation theory for linear operators}}.
\newblock Springer-Verlag, Berlin, 1995.
\newblock Reprint of the 1980 edition.

\bibitem{Krejcirik-2012-24}
{\sc {Krej{\v ci\v r{\'i}}k}, D., and \v{S}ediv{\'a}kov{\'a}, H.}
\newblock {The effective Hamiltonian in curved quantum waveguides under mild
  regularity assumptions}.
\newblock {\em Rev. Math. Phys. 24\/} (2012), 1250018 (39 pp.).

\bibitem{Krejcirik-2010-94}
{\sc {Krej{\v ci\v r{\'i}}k}, D., and Zuazua, E.}
\newblock {The Hardy inequality and the heat equation in twisted tubes}.
\newblock {\em J. Math. Pures Appl. 94\/} (2010), 277--303.

\bibitem{Kriz-2003-PhD}
{\sc K\v{r}{\'i}\v{z}, J.}
\newblock {\em {Spectral Properties of Planar Quantum Waveguides with Combined
  Boundary Conditions}}.
\newblock PhD thesis, Charles University Prague, 2003.

\bibitem{Marletta-2010}
{\sc Marletta, M.}
\newblock Neumann-{D}irichlet maps and analysis of spectral pollution for
  non-self-adjoint elliptic {PDE}s with real essential spectrum.
\newblock {\em IMA J. Numer. Anal. 30}, 4 (2010), 917--939.

\bibitem{Marco}
{\sc Marletta, M.}
\newblock personal communication, 2014.

\bibitem{Mikhajlov-1962-3}
{\sc Mikhajlov, V.}
\newblock {Riesz bases in ${\mathcal{L}}\sb 2 (0,1)$.}
\newblock {\em Sov. Math. Dokl. 3\/} (1962), 851--855.

\bibitem{Osborn-1975-29}
{\sc Osborn, J.~E.}
\newblock {Spectral approximation for compact operators}.
\newblock {\em Math. Comput. 29\/} (1975), 712--725.

\bibitem{reedsimon}
{\sc Reed, M., and Simon, B.}
\newblock {\em Methods of modern mathematical physics. {IV}. {A}nalysis of
  operators}.
\newblock Academic Press, New York, 1978.

\bibitem{Shin-2002-229}
{\sc Shin, K.~C.}
\newblock {On the Reality of the Eigenvalues for a Class of
  $\mathcal{PT}$-Symmetric Oscillators}.
\newblock {\em Comm. Math. Phys. 229\/} (2002), 543--564.

\bibitem{Sibuya-1975}
{\sc Sibuya, Y.}
\newblock {\em {Global theory of a second order linear ordinary differential
  equation with a polynomial coefficient}}.
\newblock North-Holland Publishing Co., Amsterdam, 1975.

\bibitem{Siegl-2012-86}
{\sc Siegl, P., and {Krej{\v ci\v r{\'i}}k}, D.}
\newblock {On the metric operator for the imaginary cubic oscillator}.
\newblock {\em Phys. Rev. D 86\/} (2012), 121702(R) (6 pp.).

\bibitem{Simon-1982-7}
{\sc Simon, B.}
\newblock {Schr{\"o}dinger semigroups}.
\newblock {\em Bull. Amer. Math. Soc. 7\/} (1982), 447--526.

\bibitem{stummel1}
{\sc Stummel, F.}
\newblock Diskrete {K}onvergenz linearer {O}peratoren. {I}.
\newblock {\em Math. Ann. 190\/} (1970), 45--92.

\bibitem{MT-priv}
{\sc Tater, M.}
\newblock personal communication, 2013.

\bibitem{Titchmarsh-1954-5}
{\sc Titchmarsh, E.~C.}
\newblock {On the asymptotic distribution of eigenvalues}.
\newblock {\em Q. J. Math. 5\/} (1954), 228--240.

\bibitem{Trefethen-2005}
{\sc Trefethen, L.~N., and Embree, M.}
\newblock {\em {Spectra and Pseudospectra: The Behavior of Nonnormal Matrices
  and Operators}}.
\newblock Princeton University Press, 2005.

\bibitem{Weidmann-1997-34}
{\sc Weidmann, J.}
\newblock Strong operator convergence and spectral theory of ordinary
  differential operators.
\newblock {\em Univ. Iagel. Acta Math. 34\/} (1997), 153--163.

\bibitem{weid1}
{\sc Weidmann, J.}
\newblock {\em Lineare {O}peratoren in {H}ilbertr{\"a}umen. {T}eil {I}}.
\newblock B. G. Teubner, Stuttgart, 2000.

\bibitem{Weidmann-survey}
{\sc Weidmann, J.}
\newblock Spectral theory of {S}turm-{L}iouville operators approximation by
  regular problems.
\newblock In {\em Sturm-{L}iouville theory}. Birkh\"auser, Basel, 2005,
  pp.~75--98.

\end{thebibliography}
\bibliographystyle{acm}

\end{document}